\tikzset{%
  >={Latex[width=2mm,length=2mm]},
  % Specifications for style of nodes:
            base/.style = {rectangle, rounded corners, draw=black,
                           minimum width=3cm, minimum height=1cm,
                           text centered, font=\sffamily},
  activityStarts/.style = {base },
       startstop/.style = {base },
    activityRuns/.style = {base},
         process/.style = {base, minimum width=2cm,                            font=\ttfamily},
}
\definecolor{Blue}{rgb}{0,0,1}
\definecolor{Red}{rgb}{1,0,0}
\def\red{\color{Red}}
\definecolor{Green}{rgb}{0,1,0}
\definecolor{Yellow}{rgb}{1,1,0}
\DeclareFontFamily{OT1}{eusb}{} \DeclareFontShape{OT1}{eusb}{m}{n} {<5> <6> <7> <8> <9> <10> <11> <12> <14.4> eusb10}{}
\DeclareMathAlphabet{\eusb}{OT1}{eusb}{m}{n}
\DeclareFontFamily{OT1}{eusm}{} \DeclareFontShape{OT1}{eusm}{m}{n} {<5> <6> <7> <8> <9> <10> <11> <12> <14.4> eusm10}{}
\DeclareMathAlphabet{\eusm}{OT1}{eusm}{m}{n}
\DeclareFontFamily{OT1}{eufm}{} \DeclareFontShape{OT1}{eufm}{m}{n} {<5> <6> <7> <8> <9> <10> <11> <12> <14.4> eufm10}{}
\DeclareMathAlphabet{\mathfrak}{OT1}{eufm}{m}{n}
\DeclareFontFamily{OT1}{fraktura}{}
\DeclareFontShape{OT1}{fraktura}{m}{n} {<5> <6> <7> <8> <9> <10> <11> <12> <13> <14.4> [1.1] eufm10}{}
\DeclareMathAlphabet{\fraktura}{OT1}{fraktura}{m}{n}
\DeclareFontFamily{OT1}{cmfi}{} \DeclareFontShape{OT1}{cmfi}{m}{n} {<5> <6> <7> <8> <9> <10> <11> <12> <13> <14.4> [0.9] cmfi10}{}
\DeclareMathAlphabet{\cmfi}{OT1}{cmfi}{b}{n}
\DeclareFontFamily{OT1}{cmss}{} \DeclareFontShape{OT1}{cmss}{m}{n} {<5> <6> <7> <8> <9> <10> <11> <12> <13> <14.4> cmss10}{}
\DeclareMathAlphabet{\cmss}{OT1}{cmss}{m}{n}
\newtheoremstyle{thm}{1.5ex}{1.5ex}{\itshape\rmfamily}{} {\bfseries\rmfamily}{}{2ex}{}
\newtheoremstyle{def}{1.5ex}{1.5ex}{\slshape\rmfamily}{} {\bfseries\rmfamily}{}{2ex}{}
\newtheoremstyle{rem}{1.3ex}{1.3ex}{\rmfamily}{} {\itshape}
{} {1.5ex}{}
\theoremstyle{thm}
\newtheorem{theorem}{Theorem}[section]
\newtheorem{lemma}[theorem]{Lemma}
\newtheorem{claim}[theorem]{Claim}
\newtheorem{proposition}[theorem]{Proposition}
\newtheorem*{Main Theorem}{Main Theorem.}
\newtheorem{corollary}[theorem]{Corollary}
\newtheorem*{special theorem}{Lindeberg-Feller Theorem for Martingales}
\theoremstyle{def}
\theoremstyle{rem}
\numberwithin{equation}{section}
\renewcommand{\section}{\secdef\sct\sect}
\newcommand{\sct}[2][default]{%
\refstepcounter{section}
%\addcontentsline{toc}{section}{{\tocsection {}{\thesection}{\!\!\!\!#1\dotfill}}{}}
\vspace{0.7cm}
\centerline{\scshape\thesection.\ #1} \nopagebreak \vspace{0.2cm}}
\newcommand{\sect}[1]{%
\vspace{0.4cm} \centerline{\large\scshape\rmfamily #1}
\vspace{0.2cm}}
\newcommand{\ignore}[1]{{}}
\newcommand{\theprobspace}{{\mathbb X}}
\newcommand\blfootnote[1]{%
  \begingroup
  \renewcommand\thefootnote{}\footnote{#1}%
  \addtocounter{footnote}{-1}%
  \endgroup
}
\renewcommand{\subsection}{\secdef\subsct\sbsect}
\newcommand{\subsct}[2][default]{\refstepcounter{subsection}
%\addcontentsline{toc}{subsection}
%{{\tocsection{\!\!}{\hspace{1.2em}\thesubsection}{\!\!\!\!#1\dotfill}}{}}
\nopagebreak\vspace{0.45\baselineskip} {\flushleft\bf
\thesubsection~\bf #1.~}
\\*[3mm]\noindent
\nopagebreak}
\newcommand{\sbsect}[1]{\vspace{0.1cm}\noindent
\textbf{#1.~}\vspace{0.1cm}}
\renewcommand{\subsubsection}{%
\secdef \subsubsect\sbsbsect}
\newcommand{\subsubsect}[2][default]{%
\refstepcounter{subsubsection}
%\addcontentsline{toc}{subsubsection}{{\tocsection{\!\!}
%{\hspace{3.05em}\thesubsubsection}{\!\!\!\!#1\dotfill}}{}}
\nopagebreak
\vspace{0.15\baselineskip} \nopagebreak {\flushleft\rmfamily
\itshape\thesubsubsection
\ \rmfamily #1\/.}\ }
\newcommand{\sbsbsect}[1]{\vspace{0.1cm}\noindent
\rmfamily \itshape
\arabic{section}.\arabic{subsection}.\arabic{subsubsection} \
\sffamily #1\/.\ }
\newcommand{\N}{\mathbb N}
\newcommand{\BbbP}{\mathbb P}
\newcommand{\Z}{\mathbb Z}
\definecolor{Blue}{rgb}{0,0,1}
\def\myffrac#1#2 in #3{\raise 2.6pt\hbox{$#3 #1$}\mkern-1.5mu\raise 0.8pt\hbox{$#3/$}\mkern-1.1mu\lower 1.5pt\hbox{$#3 #2$}}
\newcommand{\quenchedP}{P}
\newcommand{\annealedP}{\BbbP}
\newcommand{\quenchedE}{E}
\newcommand{\annealedE}{{\mathbb E}}
\newcommand{\abdry}[2]{\partial_{\kern-0.3ex\scalebox{.6}{$#1$}}\kern-0.1ex{#2}}%the "A" part of the boundary
\newcommand{\starsim}{%
  \mathrel{\vbox{\offinterlineskip\ialign{%
    \hfil##\hfil\cr
    \mbox{\scriptsize$*$}\cr
    \noalign{\kern0ex}
    $\sim$\cr
}}}}%the star path in the proof of a percolation fact
\begin{document}

%%%%%%%%%%%%%%%%%%%%%%%%%
\tikzstyle{decision} = [diamond, draw, fill=blue!20, 
    text width=4.5em, text badly centered, node distance=3cm, inner sep=0pt]
\tikzstyle{block} = [rectangle, draw, fill=blue!20, 
    text width=5em, text centered, rounded corners, minimum height=4em]
\tikzstyle{line} = [draw, -latex']
\tikzstyle{cloud} = [draw, ellipse,fill=red!20, node distance=3cm,
    minimum height=2em]

%\thanks{\hglue-4.5mm\fontsize{9.6}{9.6}
%Key words and phrases. Elliptic Harnack inequality, Random walks in random environments, non-ellipticity, Percolation,  non-divergence form operators,  balanced environment.\\
%Mathematics Subject Classification: 60K37.
%%\selectfont\copyright\,2019 by N.~Berger, M.~Cohen, J.-D.~Deuschel and X.~Guo.
%\vspace{2mm}}
%\maketitle
%
%\vspace{-5mm}
%\centerline{\textit{$^1$The Technical University of Munich}}
%\centerline{\textit{$^2$The Hebrew University of Jerusalem}}
%\centerline{\textit{$^3$The Technical University of Berlin}}
%\centerline{\textit{$^4$ University of Wisconsin--Madison}}
%\centerline{\textit{$^5$ University of Cincinnati}}
%
%
%\begin{abstract}
%We prove an elliptic Harnack inequality at large scale on the lattice
%$\Z^d$,  for non-negative solutions of a
%difference equation with balanced i.i.d. coefficients which are not necessarily elliptic. 
%We also identify the optimal constant in the Harnack inequality.
%%to be the constant in the classical Harnack inequality in $R^d$ for the
%%limiting Brownian motion
%Our proof relies on a quantitative homogenization result
%of the corresponding invariance principle to Brownian motion, and on percolation
%estimates.
%%and derive a  
%%Liouville principle showing that any
%%strongly sublinear harmonic function must be constant.
%As a corollary of our main theorem we derive an almost optimal H\"older estimate.
%\end{abstract}

%%%%%%%%%%%%%%%%%%%%%<<<<<<
%\begin{frontmatter}
\title{Scaling limits for random walks on long range percolation clusters}
%\runtitle{Scaling limits for LRP}
%\thankstext{T1}{A sample additional note to the title.}

%\begin{aug}
%%%%%%%%%%%%%%%%%%%%%%%%%%%%%%%%%%%%%%%%%%%%%%
%%Only one address is permitted per author. %%
%%Only division, organization and e-mail is %%
%%included in the address.                  %%
%%Additional information can be included in %%
%%the Acknowledgments section if necessary. %%
%%%%%%%%%%%%%%%%%%%%%%%%%%%%%%%%%%%%%%%%%%%%%%
\author{Noam Berger \thanks{Technical University of Munich, Fakult\"at f\"ur Mathematik,
85748 Garching bei M\"unchen, 
Germany. {\tt noam.berger@tum.de}}
 \and Yuki Tokushige\thanks{Technical University of Braunschweig, Institut f\"{u}r Mathematische Stochastik,
38106 Universit\"atsplatz 2 Braunschweig, Germany
 {\tt yuki.tokushige@tu-braunschweig.de}}}

%%%%%%%%%%%%%%%%%%%%%%%%%%%%%%%%%%%%%%%%%%%%%%
%% Addresses                                %%
%%%%%%%%%%%%%%%%%%%%%%%%%%%%%%%%%%%%%%%%%%%%%%

%\affiliation{Technische Universit\"at M\"unchen\thanksmark{m1}}
%\affiliation{The Hebrew University of Jerusalem\thanksmark{m2}}
%\affiliation{Technische Universit\"at Berlin\thanksmark{m3}}
%\affiliation{University of Wisconsin--Madison\thanksmark{m4} and University of Cincinnati\thanksmark{m5}}

%\runauthor{N. Berger and Y. Tokushige}
%\end{aug}
\maketitle

\begin{abstract}
We study limit laws for simple random walks on supercritical long-range
percolation clusters on $\Z^d, d\geq 2$. For the long range percolation model, the
probability that two vertices $x,y$ are connected behaves asymptotically as
$|x-y|^{-s}$ with $s \in [d + 1,d + 2)$. We prove that the scaling limit of simple
random walk on the infinite component converges to an $\alpha$-stable L\'evy process with $\alpha = s - d$. This complements the work of Crawford and Sly \cite{crawford2013simple}, who proved the corresponding result for the case $s \in (d, d+1)$.  The convergence holds in both the quenched and annealed senses.
%In the case where d = 1 and s > 2 we show that the simple random walk converges to a Brownian motion. The proof combines heat kernel bounds from our companion paper [Crawford and Sly Probab. Theory Related Fields 154 (2012) 753\UTF{2013}786], ergodic theory estimates and an involved coupling constructed through the exploration of a large number of walks on the cluster.
\end{abstract}

\blfootnote{2010 {\it Mathematics Subject Classification}.  60K37, 35B27.}
\blfootnote{{\it Key words and phrases}. Random walks in random environments, Percolation, balanced environment.}

%\end{frontmatter}
%%%%%%%%%%%%%%%%%%%%%>>>>>>

\section{Introduction}

\label{sec:intro}\noindent

\subsection{background}
Long range percolation is a percolation model on $\Z^d$ where there can be an open edge between any pair of two distinct vertices. This stands in contrast to the more classical nearest neighbour percolation, where an edge can only span two `nearest-neighbor' vertices, i.e. two vertices of Euclidean distance $1$. For long range percolation, the probability of an edge being open is taken to be a function of the distance between its end vertices. This model has been studied extensively since its introduction by Schulman in 1983 \cite{Schul83}.

It was already noticed in Schulman's seminal paper that the most interesting and relevant regimes are when the connection probabilities decay like a power of the distance, i.e. the probabiltiy that the edge between $x$ and $y$ is open decays like $|x - y|^{-s}$, where $|\cdot|$ denotes the $\ell^2$ norm.

Combining the results of \cite{Schul83}, \cite{newman1986one} and \cite{aizenman1986discontinuity} we get that for $s \leq d$, an infinite component exists almost surely. In particular, in that case the degree of every vertex is infinite, and the entire lattice spans one connected component. For $d \geq 2$ and $s > d$ a phase transition for the existence of an infinite component exists. For $d=1$, when $1 < s \leq 2$ a phase transition exists, and for $s > 2$ an infinite component can exists only if the occupation probability of nearest-neighbour edges is 1. The case $d = 1, s = 2$ exhibits more complex behaviour, which we will not discuss in this paper.
See \cite{aizenman1986discontinuity,duminil2020long} for more detailed explanations.

Some of the main topics that have been studied in this context are critical behaviour \cite{aizenman1986discontinuity, baumler2022isoperimetric, hutchcroft2022sharp, hutchcroft2021critical, hutchcroft2021power, heydenreich2008mean, duminil2020long, chen2015critical, berger2002transience}, geometric features \cite{benjamini2008long, baumler2023distances, baumler2022behavior, benjamini2001diameter, berger2004lower, biskup2004scaling, biskup2011graph, biskup2019sharp, coppersmith2002diameter, ding2013distances, hao2023graph, wu2022sharp} and behaviour of the random walk on it \cite{BCKW21, berger2002transience, crawford2013simple, crawford2012simple, can2021spectral}.

The latter topic started with classification to recurrent and transient long range percolation \cite{berger2002transience}, and then evolved into the more advanced (and difficult) questions of heat kernels and scaling limits \cite{BCKW21, crawford2013simple, crawford2012simple, can2021spectral}. Most relevantly for the present paper, the main (but not only) result in \cite{crawford2013simple} is that when $d<s<d+1$, then the random walk scales to an $\alpha$-stable process, for $\alpha = s - d$. We prove in this paper that this continues to be true for $d+1 \leq s < \min(d+2, 2d)$. These results are in contrast to results obtained in the works
\cite{Barlow, berger-biskup2007,mathieu-piatniski2007}, where random walks on nearest-neighbour percolation clusters are shown to exhibit Gaussian fluctuation.

\subsection{notations and definitions}

  For $x\in \Z^d$, we write $|x|$ (resp. $|x|_p$) for the $\ell^2$ (resp. $\ell^p$) norm of $x$.
  For $x\in\Z^d$ and $r>0$, define an open ball $B(x,r):=\{y\in\Z^d:\ |x-y|<r\}$.% where $|\cdot|$ denotes the Euclidean distance.

Fix $d\geq 1$ and $s > d$, and let $P(x), x\in\Z^d\setminus\{0\}$ be such that
  $P(x) = P(-x)$ for all $x$, and
  there exists $C$ such that $P(x) \sim C|x|^{-s}$, namely 
\begin{equation}\label{eq:deflrp}
\lim_{|j|\to\infty}\frac{P(j)}{C|j|^{-s}} = 1
\end{equation}

In this paper we mostly focus on the case $d\geq 2$ and $s \in [d + 1,d + 2)$.

We now consider a percolation model on $\Z^d$: an edge between $x$ and $y$ is {\em open} with probability $P(x-y)$, and the events of openness of edges are all independent of each other. A percolation configuration will usually be donated by $\omega$. 
In this article we assume that the model is {\it percolating}, namely, there exists an infinite connected component in $\omega$.
Moreover, in our setting it is shown in \cite{AKN87} that
%We assume that the connection probabilities $(P_j)$ are such that
\[
P\left[\mbox{there exists an unique infinite component}\ \mathcal{C}^{\infty}\right] = 1.
\]
%\textcolor{red}{Cite a uniqueness result here.}
 Therefore, we can assume the uniqueness of the infinite cluster in what follows.
Following \cite{crawford2013simple} we use $\mu$ to denote the distribution of the long range percolation, and $\mu_0$ to denote the distribution of the long range percolation conditioned on the (positive probability) event that $0$ is in $\mathcal{C}^{\infty}$.
We also use $\nu$ and $\nu_0$ for the measures $\mu$ and $\mu_0$ weighted by the degree of the origin, i.e.
\[
\nu(A) = \frac 1Z\int_A \deg_\omega(0)d\mu(\omega)
\]
and the same for $\nu_0$ and $\mu_0$, 
where the normalization constant $Z$ is such that $\nu$ is a probability measure.

As already explained in \cite{crawford2013simple} the advantage of $\nu$ and $\nu_0$ is that they are reversible measures for the random walk in the point of view of the particle, while being equivalent to respectively $\mu$ and $\mu_0$. $\nu_0$ is invariant and ergodic for the random walk in the point of view of the particle.

Conditioned on the event $\{0\in\mathcal{C}^{\infty}\}$, we consider the simple random walk $(X_n)$ on the configuration $\omega$, namely the Markov chain such that 
$X_0 = 0$ and conditioned on $X_0,\ldots,X_n$, $X_{n+1}$ is uniformly distributed among all $\omega$-neighbours of $X_n$, where we say that $y$ is an {\em $\omega$-neighbour} of $x$ if the edge $(x,y)$ is open in $\omega$.

The purpose of this paper is to prove that after proper scaling the random walk $(X_n)$ converges to an isotropic $\alpha$-stable process, with $\alpha = s - d$. This statement was proved in \cite{crawford2013simple} for $s\in (d,d+1)$ and conjectured for $s\in[d+1, d+2)$. In order to be able to properly state our main theorem, we first need to discuss the distribution of the random walk and the topology in which the convergence takes place.

\subsection{Quenched and annealed distributions for the simple random walk}
We consider two different possible measures for the random walk $(X_n)$ - the quenched and the annealed. First, for any $\omega \in \Omega$ we define $\quenchedP^\omega_x$ to be the law of the simple random walk as described above. If we write $\quenchedP^\omega$, we mean $\quenchedP^\omega_0$, namely the quenched law of the walk starting at the origin.

We also define the {\em annealed measure} on $(X_n)$ to be the average of the quenched based on one of the measures $\mu$, $\mu_0$, $\nu$ or $\nu_0$, namely for a measure $Q$ on $\Omega$ (we think of $Q\in\{\mu, \mu_0, \nu, \nu_0\}$) we define
\[
\annealedP_{Q}^x = \int_{\Omega}P^\omega_x dQ(\omega),
\]
where, as before, if we omit the superscript $x$ then we mean that the random walk started at the origin.

\subsection{The topology in which the convergence takes place}
Crawford and Sly noticed in \cite{crawford2013simple} that convergence cannot take place in the usual Skorohod topology. The reason is that it happens quite often that a long edge is traversed an even number of times in a short period of time. Such an occurrence scales in the Skorohod topology to a removable one-point discontinuity, which does not exists in the limiting $\alpha$-stable process.  \cite{crawford2013simple} therefore proved the convergence in the $L^q$ norm, $q\in[1,\infty]$. In this norm, those jumps even out and disappear. Nevertheless, one has to be clear regarding the exact choice of space in which the convergence takes place: The sample of the stable process is almost surely in the space $L^q[0,1]$. The distribution is however not necessarily in the space $L^q([0,1]\times \theprobspace)$, where $\theprobspace$ is the probability space. We therefore view the stable process, as well as the random walk, as $L^q[0,1]$-valued random variables on the probability space $\theprobspace$, and prove convergence in distribution in this setting.

\subsection{Statement of the main result.}
As we explained in Introduction, the purpose of this paper is to study the fluctuation of a SRW on the LRP cluster. In this direction, Crawford and Sly \cite{crawford2013simple} proved the following quenched stable limit theorem for scaling limits:
\begin{theorem}\label{thm:cs} \cite[Theorem 1.1]{crawford2013simple}
Let $d\geq1$ and $s\in(d,d+1)$. Let $(X_n)$ be a simple random walk on $\omega$ starting from the origin $0$ and for $n\in\mathbb{N}$ and $t\in[0,1]$ we define the rescaled process
$$X^n(t):=n^{-1/\alpha}X_{\lfloor nt\rfloor}.$$
Then there exists a measurable set $\Omega_0\subset \{0\in\mathcal{C}^{\infty}\}$ with $\mu_0(\Omega_0)=1$ such that for $\mu_0$-a.a. environment $\omega\in\Omega_0$ and $q\in[1,\infty)$, the distribution of $(X^n(t))_{t\in[0,1]}$ weakly converges on $L^q[0,1]$ to the distribution of a $d$-dimensional isotropic $\alpha$-stable process.
\end{theorem}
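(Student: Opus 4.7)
The plan is to follow the standard architecture for stable limit theorems for random walks in random environments, adapted to the quenched setting. The structural input is the environment seen from the particle: under $\nu_0$ the sequence $(\tau_{X_n}\omega)_{n\ge 0}$ is a stationary, ergodic Markov chain on environments with the origin in $\mathcal{C}^{\infty}$, and since $\nu_0$ and $\mu_0$ are mutually absolutely continuous, almost-sure statements transfer between the two measures. The annealed one-step distribution from a $\nu_0$-typical vertex has a tail of order $|y|^{-s}$ by \eqref{eq:deflrp}, which identifies $\alpha = s-d \in (0,1)$ as the stability index and means that no centering is required in the scaling.

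Next, I would fix a truncation scale $r_n$ with $1 \ll r_n \ll n^{1/\alpha}$ and split each step into a ``small'' part (edges of length $\le r_n$) and a ``large'' part (edges of length $>r_n$). First, show that the sum of the small increments, rescaled by $n^{-1/\alpha}$, vanishes in $L^q[0,1]$ in quenched probability: since $\alpha<1$, the truncated first moment is $O(r_n^{1-\alpha})$ and ergodicity of the environment process controls the partial sums. Second, show that the random point measure $\sum_{k\le n}\delta_{(k/n,\,(X_k - X_{k-1})/n^{1/\alpha})}\mathbf{1}\{|X_k-X_{k-1}|>r_n\}$ on $[0,1]\times\mathbb{R}^d$ converges to a Poisson random measure with intensity $dt \otimes \Pi(dy)$, where $\Pi$ is the isotropic $(s-d)$-stable L\'evy measure; the annealed version follows from the tail computation combined with stationarity. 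Third, combine the two: the limiting process is the pure-jump stable process driven by this Poisson measure, and convergence in the $L^q$ norm rather than Skorohod is forced by ``back-and-forth'' traversals of long edges, which are removable in $L^q$ but not in Skorohod.

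Upgrading the point-process convergence from annealed to quenched is the main obstacle. The risk is that the trajectory $X_0,\ldots,X_n$ revisits anomalous regions with many long edges, biasing the quenched jump statistics. I would address this by two complementary inputs: (a) a concentration estimate, via the ergodic theorem along the environment process, showing that the empirical distribution of degrees and local structures along the walk concentrates around its $\nu_0$-mean; and (b) a decoupling argument exploiting that, conditionally on the short-range structure of $\omega$, the status of edges of length $>r_n$ is essentially independent, so the quenched count of large jumps is close to its annealed expectation. A variance estimate along these lines, combined with Borel--Cantelli applied along a subsequence $n_k = 2^k$ and monotone interpolation, picks out a deterministic full-measure set $\Omega_0 \subset \{0\in\mathcal{C}^{\infty}\}$ on which convergence holds simultaneously for a countable separating family of test functionals.

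Finally, tightness in $L^q[0,1]$ follows by bounding a $q$-th moment of the rescaled walk: because $\alpha<1$ and the truncation cleanly separates bulk from tail, one obtains a uniform-in-$n$ bound $\mathbb{E}_\omega\|X^n\|_{L^q}^q \le C(\omega)$ valid for $\omega \in \Omega_0$, which together with fdd convergence yields the claimed distributional convergence in $L^q[0,1]$.
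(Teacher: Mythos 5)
Your proposal follows a textbook ``truncate, Poissonize, upgrade annealed to quenched'' pipeline, but it misses the central structural difficulty that makes the Crawford--Sly proof (and the present paper's adaptation of it) so involved. The specific failure is in the point-process step. When the walk finds a long edge $(u,v)$ it typically crosses it several times before escaping, so the point measure $\sum_{k\le n}\delta_{(k/n,\,(X_k-X_{k-1})/n^{1/\alpha})}\mathbf{1}\{|X_k-X_{k-1}|>r_n\}$ carries, for each discovered long edge, a cluster of atoms $+(v-u)/n^{1/\alpha},-(v-u)/n^{1/\alpha},\dots$ all within a rescaled time window of width $o(1)$. In the limit these do not coalesce into a single atom (they differ in the spatial coordinate), so the limiting measure is not a Poisson random measure with intensity $dt\otimes\Pi(dy)$; indeed, a PRM with that intensity almost surely has at most one atom per time, while your limit has multiple-atom clusters. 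What must be extracted is the \emph{effective} jump after the walk escapes: $Z_i^l=\sigma_i^l\sum_{|x|>2^{(1/\alpha-\varepsilon)k}}xw_i^l(x)$, where $\sigma_i^l\in\{0,1\}$ records the parity of the number of traversals. The law of $\sigma_i^l$ depends on the local return probabilities and local degrees at \emph{both} endpoints (Claim \ref{claim:cross}), and identifying this geometric structure and showing it is independent of the long-edge increment and i.i.d.\ over successive discoveries is precisely what Sections~\ref{sec:coup} and 5 are built to do. Your phrase that back-and-forth traversals ``are removable in $L^q$'' is true, but you use it only to justify the topology, not to repair the intensity measure, so the stable limit your argument produces would have the wrong normalization.

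A second gap is the annealed-to-quenched upgrade. The phrase ``a decoupling argument exploiting that, conditionally on the short-range structure of $\omega$, the status of edges of length $>r_n$ is essentially independent'' elides exactly the difficulty: the walk's path is a functional of the very environment whose long-edge field you wish to treat as fresh, and a return to a previously explored region re-exposes already-revealed edges. Crawford--Sly address this by sampling $k^3$ independent walks in a common environment, constructing a joint coupling with i.i.d.\ variables that holds simultaneously for all $k^3$ walks except on the error events of Section~\ref{sec:coup} (probability $\le 2^{-\eta_2 k}$ by \eqref{ineq:good event}), and then using a Chernoff bound: if the quenched law were far from the annealed one, a positive fraction of the $k^3$ walks would have to behave anomalously, which the coupling rules out. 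Your Borel--Cantelli-along-$2^k$ idea is compatible in spirit, but without the explicit coupling (and the Markov/Chernoff step it enables) it is not clear how to propagate annealed control of the long-jump process to a single fixed $\omega$. The concentration via the ergodic theorem that you invoke has no rate and therefore does not feed a Borel--Cantelli argument; the paper obtains the needed rate through the coupling and quantitative transience estimates (Lemma \ref{lem:cs4.1}, Lemma \ref{lem:cs4.2}, Proposition \ref{prop:hke}).

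Your treatment of the small steps (truncated first moment $O(r_n^{1-\alpha})$ with $\alpha<1$, summed and rescaled) and your remark about $L^q$ tightness are consistent with how the case $s\in(d,d+1)$ is handled, and these parts are fine. The two gaps above are real, however: the first would produce the wrong limit, and the second leaves the quenched claim unproved.
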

Note that Theorem \ref{thm:cs} does not cover the whole regime of stable fluctuations. Namely, they do not obtain the quenched stable limit theorem for $s\in[d+1,d+2)$. 
Our main result fills this gap.% in understanding of the stable fluctuation regime.
\begin{theorem}\label{thm:main}
Let $d\geq2$ and $s\in[d+1,d+2)$. Let $(X_n)$ be a simple random walk on $\omega$ starting from the origin $0$ and for $n\in\mathbb{N}$ and $t\in[0,1]$ we define the rescaled process
$$X^n(t):=n^{-1/\alpha}X_{\lfloor nt\rfloor}.$$
Then there exists a measurable set $\Omega_0\subset \{0\in\mathcal{C}^{\infty}\}$ with $\mu_0(\Omega_0)=1$ such that for $\mu_0$-a.a. environment $\omega\in\Omega_0$ and $q\in[1,\infty)$, the distribution of $(X^n(t))_{t\in[0,1]}$ weakly converges on $L^q[0,1]$ to the distribution of a $d$-dimensional isotropic $\alpha$-stable process.
\end{theorem}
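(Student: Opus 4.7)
We adapt the strategy of Crawford--Sly to the regime $\alpha = s-d \in [1,2)$. The new difficulty compared with the case $\alpha<1$ is that a single jump now has finite mean (zero by $P(x)=P(-x)$) but infinite variance, so jumps at \emph{every} scale up to $n^{1/\alpha}$ contribute to the limit, not only a few exceptionally long ones.

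We work with the environment-as-seen-from-the-particle chain $\omega_n := \tau_{X_n}\omega$, which is stationary and ergodic under $\nu_0$. Fix a truncation parameter $M>0$, set $L = L_{n,M} := n^{1/\alpha}/M$, and decompose each increment $D_k := X_{k+1}-X_k = D_k^\flat + D_k^\sharp$, where $D_k^\flat := D_k\1_{\{|D_k|\le L\}}$ and $D_k^\sharp := D_k\1_{\{|D_k|>L\}}$; correspondingly write $X_n = S_n^\flat + S_n^\sharp$. For $S_n^\sharp$ (long jumps): there are only finitely many in $n$ for fixed $M$, and between consecutive long jumps the walker mixes locally on a scale $o(L)$. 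A coupling argument in the spirit of \cite{crawford2013simple} matches the long-jump sequence with an i.i.d.\ sequence of law proportional to $P(\cdot)\1_{\{|\cdot|>L\}}$, so $n^{-1/\alpha}S^\sharp_{\lfloor n\cdot\rfloor}$ converges to the compound-Poisson part (Lévy measure restricted to $|x|>1/M$) of the target stable process.

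For $S_n^\flat$ (short jumps), which is the principal novelty: the annealed drift vanishes by symmetry, while the per-step second moment is of order $L^{2-\alpha}$, producing total variance $\asymp n^{2/\alpha} M^{\alpha-2}$ at the correct scale. We approximate $S_n^\flat$ by a quenched martingale via a sublinear corrector $\chi:\Omega\to\R^d$ solving the Poisson equation for the short-jump generator of the walk, and apply the martingale CLT to $S_n^\flat+\chi(\omega_n)-\chi(\omega)$. The limit is a Brownian motion whose covariance matches the $|x|\le 1/M$ part of the $\alpha$-stable Lévy measure. Joint convergence of $(S^\flat, S^\sharp)$, together with independence of the two components in the limit, yields convergence of $n^{-1/\alpha}X_{\lfloor n\cdot\rfloor}$ to the sum of a Brownian motion and an independent compound Poisson process; letting $M\to\infty$ and invoking Lévy--Itô assembles the full isotropic $\alpha$-stable process.

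Tightness in $L^q[0,1]$ follows from a maximal moment bound of the form $\quenchedE^\omega[\max_{k\le n}|X_k|^p] \le Cn^{p/\alpha}$ for $p<\alpha$, with the extension to arbitrary $q\in[1,\infty)$ obtained via the truncation argument of \cite{crawford2013simple}. The main obstacle is the construction of the corrector $\chi$ and the proof that $\chi(\omega_n) = o(n^{1/\alpha})$ in $\quenchedP^\omega$-probability: unlike the $\alpha<1$ regime where short jumps are asymptotically negligible, here one must quantitatively control the walk on every dyadic scale $1 \le R \le L$. This requires quenched heat-kernel and Harnack-type estimates for the LRP simple random walk that are uniform on these scales. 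The restriction $d\ge 2$ enters precisely here, through the transience and volume-growth inputs needed for such a sublinear $\chi$ to exist and for the short-jump Poisson equation to be solvable.
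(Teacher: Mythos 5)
Your proposal is a genuinely different strategy from the paper's, and while it is in the right spirit, it leaves the central difficulty unresolved. You truncate at a \emph{fixed proportion} of the stable scale, $L=n^{1/\alpha}/M$, so that only $O_P(1)$ jumps exceed $L$ and the compensated short jumps must contribute a nontrivial Brownian piece, to be killed by letting $M\to\infty$ via L\'evy--It\^o. The paper instead truncates at $n^{1/\alpha-\hezka}$ for a small fixed $\hezka>0$. At that scale the short-jump sum genuinely vanishes after scaling, and this is what Proposition~\ref{prop:shortsteps} proves. The mechanism is not a corrector but a Kesten-type forward--backward martingale decomposition: using reversibility under $\nu$ to compare the walk $X_k$ with its time reversal $Y_k=X_n-X_{n-k}$, the accumulated truncated local drift $\sum_k L_\omega(X_k)$ cancels except on the rare steps where a long edge is present, and on those steps one uses independence (under $\mu$) of $L_\omega(0)$ and the presence of a long edge at $0$ to bound the residue. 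The price the paper pays for the $n^{1/\alpha-\hezka}$ cutoff is that the number of long-edge encounters is no longer tight (it grows like $n^{\alpha\hezka}$), which is why the new Proposition~\ref{prop:phi} on sub-linear fluctuations of the counting process $\phi^l_i$ is needed, in contrast to the tight-encounter setting of \cite{crawford2013simple}.

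The genuine gap in your plan is exactly the step you flag as ``the main obstacle'': constructing the corrector $\chi$ for the truncated drift and proving it is sublinear on the superdiffusive scale $n^{1/\alpha}$, uniformly as $L=n^{1/\alpha}/M$ varies with $n$. You give no argument for this. Because the truncated local drift $L_\omega$ has $L^2(\nu)$-norm growing like $L^{1-\alpha/2}$, the corrector's $L^2$ bound itself diverges with $n$, and the standard cocycle/sublinearity machinery from the nearest-neighbour invariance-principle literature does not apply off the shelf; you would essentially have to reinvent the paper's Proposition~\ref{prop:shortsteps} or something comparable to close this. The reversibility trick the paper uses avoids the corrector entirely. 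A secondary point: the joint limit and asymptotic independence of $(S^\flat,S^\sharp)$ is asserted but not argued; since both pieces live in the same environment this is not automatic and would require a further coupling step. Finally, your explanation for $d\ge 2$ (existence of a sublinear corrector) is not the paper's reason: the restriction reflects the cut-point structure of one-dimensional LRP with $s>2$, which produces Brownian rather than stable fluctuations (Theorem~\ref{thm:s>2}), independently of any corrector considerations.
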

We give several remarks regarding Theorem \ref{thm:main}.
To begin with, we explain one of major technical issues to extend the quenched stable limit theorem to $d\geq2, s\in[d+1,d+2)$. In this case, we encounter a problem to do with the control of the sum of short jumps of the walk. A well-known fact related to this matter is that sample paths of $\alpha$-stable processes on $\mathbb{R}^d$ have bounded variation only if $\alpha\in(0,1)$. In Section 2, we will address this technical problem.\par
Our next remark is that Theorem \ref{thm:main} does not cover the case $d=1$. This is not an artifact of our proof but an essential issue. As a matter of fact, Crawford and Sly \cite{crawford2013simple} proved the following invariance principle.
\begin{theorem}\label{thm:s>2}\cite[Theorem 1.2]{crawford2013simple}
Suppose that $d=1$ and $s>2$. Assume \eqref{eq:deflrp} and $P(1)=1$.
Let $(X_n)$ be a simple random walk on $\omega$ starting from the origin $0$ and for $n\in\mathbb{N}$ and $t\in[0,1]$ we define the rescaled process
$$X^n(t):=n^{-1/2}\left(X_{\lfloor nt\rfloor}+(tn-\lfloor tn\rfloor)(X_{\lfloor nt\rfloor +1}-X_{\lfloor nt\rfloor})\right).$$
Then there exists a measurable set $\Omega_0\subset \Omega$ with $\mu(\Omega_0)=1$ such that for $\mu$-a.a. environment $\omega\in\Omega_0$, the distribution of $(X^n(t))_{t\in[0,1]}$ weakly converges on $C[0,1]$ to the distribution of a one-dimensional Brownian motion.
\end{theorem}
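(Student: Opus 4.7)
The plan is to follow the classical corrector / environment-seen-from-the-particle method for quenched invariance principles on random weighted graphs, specialized to one-dimensional long-range percolation. Because $P(1)=1$, the infinite cluster coincides with $\Z$ almost surely, so $\mu_0=\mu$ and no percolation conditioning is needed. The first step is to show that the induced chain on environments $\omega_n := \tau_{X_n}\omega$ admits the degree-weighted measure $\nu$ as a reversible stationary distribution and that $\nu$ is ergodic; this follows from a direct computation using the independence of the edge variables under $\mu$ together with tail-triviality of the product measure and the uniqueness of the infinite cluster.

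Next, I would construct a corrector $\chi:\Omega\times\Z\to\R$, a cocycle under spatial shifts, such that $M_n := X_n + \chi(\omega,X_n)$ is a square-integrable martingale under $\quenchedP^\omega$ for $\mu$-a.e.\ $\omega$. The corrector is obtained as the projection of the coordinate cocycle $(\omega,x)\mapsto x$ onto the closure of gradients inside the Hilbert space of $L^2(\nu)$-cocycles. The essential quantitative input is that the diffusion constant
\[
\sigma^2 := \inf_{f} E_\mu\Bigl[\sum_{y\neq 0} \1\{(0,y)\in\omega\}\,(y - f(\tau_y\omega) + f(\omega))^2\Bigr]
\]
is finite for every $s>2$, where the infimum ranges over bounded measurable $f:\Omega\to\R$. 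This requires care, since the naive step variance $\sum_y y^2 P(y)$ already diverges when $s\in(2,3]$; the point is that one can exploit the nearest-neighbor backbone supplied by $P(1)=1$ and choose $f$ so as to route long edges through unit-length paths, thereby keeping $\sigma^2$ finite.

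With the martingale in place, I would invoke the Lindeberg-Feller theorem for stationary ergodic martingales (the ``special theorem'' registered in the preamble): the conditional quadratic variation $\tfrac{1}{n}\langle M\rangle_n$ converges a.s.\ to $\sigma^2$ by Birkhoff's theorem applied to the environment chain, and the Lindeberg condition is handled by a truncation argument separating off the rare large single-step jumps, whose contribution is controlled thanks to the corrector's absorption of the offending excess. This yields $M_{\lfloor nt\rfloor}/\sqrt n \Rightarrow \sigma B(t)$ in $C[0,1]$ in the quenched sense.

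The principal obstacle, on which I would spend the bulk of the work, is proving the sublinearity of the corrector along the walk: $\max_{k\leq n}|\chi(\omega,X_k)|/\sqrt n \to 0$ in $\quenchedP^\omega$-probability for $\mu$-a.e.\ $\omega$. I would proceed in two steps: first, use the cocycle property and ergodicity of $\nu$ to deduce $|\chi(\omega,x)|/|x|\to 0$ as $|x|\to\infty$ in a mean sense; then promote this to pointwise sublinearity along trajectories, combining the diffusive a priori bound $\quenchedE^\omega[|X_n|^2]\leq Cn$ (from the Dirichlet-form inequality) with maximum-principle-type arguments on the percolation graph, the 1D structure and the backbone $P(1)=1$ playing a central role in controlling oscillations of $\chi$ between consecutive visited sites. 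Once this is done, the decomposition
\[
\frac{X_{\lfloor nt\rfloor}}{\sqrt n} = \frac{M_{\lfloor nt\rfloor}}{\sqrt n} - \frac{\chi(\omega,X_{\lfloor nt\rfloor})-\chi(\omega,0)}{\sqrt n}
\]
yields the quenched CLT for $X$, and the piecewise-linear interpolation in the statement contributes at most an $O(n^{-1/2})$ correction in the $C[0,1]$-norm.
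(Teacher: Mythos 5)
The paper does not prove Theorem~\ref{thm:s>2}; it is quoted verbatim from Crawford--Sly, and the authors themselves point to the key mechanism right after the statement: ``This is due to the presence of \emph{cut points} for $d=1, s>2$.'' Crawford and Sly's proof hinges on the observation that for $d=1$, $s>2$, $P(1)=1$, a positive density of sites $x$ are \emph{cut points}, i.e.\ no open edge $(u,v)$ with $u<x<v$ exists (this is exactly where $s>2$ enters, via $\sum_k k\,P(k)<\infty$). Cut points partition $\Z$ into i.i.d.\ finite blocks, every long edge lies inside a single block, and the walk can only change blocks by passing through a cut point. The ``cut-point index'' coordinate therefore has bounded increments along the walk and supplies a renewal structure in both the environment and the walk, from which the quenched invariance principle follows by fairly classical one-dimensional arguments. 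Your corrector-based proposal is a genuinely different route and does not use cut points at all.

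There is, however, a concrete gap in the proposal. Your variational formula
\[
\sigma^2 := \inf_{f\ \mathrm{bounded}} E_\mu\Bigl[\sum_{y\neq 0}\Bbbone\{(0,y)\in\omega\}\bigl(y - f(\tau_y\omega) + f(\omega)\bigr)^2\Bigr]
\]
is identically $+\infty$ for every $s\in(2,3]$, which is part of the claimed range. Indeed, for bounded $f$ with $\|f\|_\infty = M$ one has $|y - f(\tau_y\omega)+f(\omega)| \ge |y|-2M$, so the expectation dominates $\sum_{|y|>2M}(|y|-2M)^2 P(y)$, which diverges whenever $\sum_y y^2 P(y)=\infty$, i.e.\ whenever $s\le 3$. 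Your remark that one can ``route long edges through unit-length paths'' by choosing $f$ cannot be realized with bounded $f$: the gradient $f(\tau_y\omega)-f(\omega)$ is $O(1)$ while $y$ is unbounded, so no cancellation of the heavy tail is possible in this class. The cancellation you need requires $f$ (equivalently, the corrector $\chi$) to be \emph{unbounded} and to satisfy $\chi(\omega,y)\approx -y+O(1)$ uniformly along the long edges present in $\omega$; but then the standard Kipnis--Varadhan/$H_{-1}$ machinery that you invoke no longer applies off the shelf, since the form $y\mapsto y$ is not even $L^2$ against the jump measure, and the existence, square-integrability and sublinearity of such a corrector all become the main difficulty rather than a routine projection argument. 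The object that actually achieves this cancellation is precisely the rescaled cut-point index $x\mapsto C(x)/\rho$, whose difference from $x$ is the corrector you are implicitly seeking; constructing it and verifying its properties is in essence the Crawford--Sly argument, and bypassing it while quoting a variational formula over bounded $f$ does not close the gap.

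A secondary, smaller issue: the a priori diffusive bound $\quenchedE^\omega[|X_n|^2]\le Cn$ that you propose to use for sublinearity is itself false annealed (the step has infinite variance for $s\le 3$), so it would have to be replaced by a quenched bound on a suitably truncated or corrected process, which again presupposes the corrector/cut-point structure.
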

One implication of this theorem is that when $d=1$ there is no stable fluctuations for $s\in[d+1,d+2)$.
This is due to the presence of {\it cut points} for $d=1,s>2$, which is a unique feature of one-dimensional LRP models. See \cite{Schul83} for details. These cut points are also the reason why we assume $P(1)=1$ in Theorem \ref{thm:s>2}; the model does not percolate otherwise. For $d=1$, it seems plausible to believe that fluctuations in the case $s=2$ somehow interpolates between a $1$-stable process and a Brownian motion depending on the value of $C$ in \ref{eq:deflrp}. We recommend interested readers to consult \cite[Section 2.4]{BCKW21}.

\subsection{Outline of the proof and structure of the paper}
This paper is organized as follows: in Section 2, we will show that the sum of short jumps weakly converges to 0 after scaling. In Section 3, we will collect several technical estimates which will play an important role in the paper.
In Section 4, we will review the coupling procedure introduced in \cite{crawford2013simple}.
In Section 5, we will discuss how to approximate a SRW on the LRP cluster by the sum of {\it i.i.d.} random variables and prove Theorem \ref{thm:main}. Finally, in Section 6 we will show Proposition \label{prop:phi}, which is a key technical input for the proof of Theorem \ref{thm:main}.

\section*{Acknowledgement}
The second author was supported by the
EPSRC Grant 'Homogenization of
random walks: degenerate environments and long-range jumps'.

\section{Control of the short steps}\label{sec:shortsteps}
\newcommand{\katan}{{\xi}}
\newcommand{\hezka}{{\epsilon}}

The main (but not the only) technical difference between the current paper and \cite{crawford2013simple} is in the control over the short steps. In \cite{crawford2013simple} the steps of the walk up to time $n$ were divided into {\em short steps}, namely steps of length up to length $n^{1/\alpha - \hezka}$, and 
{\em long steps}, i.e. steps longer than $n^{1/\alpha - \hezka}$. Then one proves that the shorts steps vanish in the limit whereas the long steps converge to the stable process. In \cite{crawford2013simple} the case $\alpha \in(0,1)$ was considered, in which one can simply sum up the absolute values of the short steps and get a value which is small enough. In the present paper this cannot work; the sum of the absolute values of the short steps is  too large. We therefore need to prove that the short steps cancel each other in a satisfactory way. To do this we adapt an old argument due to Kesten (see, e.g.  \cite{kesten1986subdiffusive}).

The main statement in this section is the following proposition.

\begin{proposition}\label{prop:shortsteps}
Fix $\varepsilon > 0$. For almost every configuration $\omega \in \Omega$, we have that
\begin{equation}\label{eq:shortsteps}
W_k:= \max_{m<2^k} \left| \frac{1}{2^{k/\alpha}} \sum_{j=1}^m (X_j - X_{j-1})\cdot{\bf 1}_{\left\{|X_j - X_{j-1}| \leq 2^{(1/\alpha - \hezka)k}\right\}} \right|
\end{equation}
converges to $0$ in distribution under the measure $\quenchedP^{\omega}$ as $k$ goes to infinity, where the phrase ``for almost every $\omega$'' refers to any of the measures
$\mu$, $\mu_0$, $\nu$ and $\nu_0$.\par
Set $\rho=\rho(\varepsilon):=(1-\alpha/2)\varepsilon\ $. Then we have that
\begin{align}\label{ineq:short}
\annealedE_{\nu}\left[ W_k\right]=O(2^{-\rho k}).
\end{align}
Furthermore, we have the following estimate too:
\begin{align}\label{ineq:short-condition}
\annealedE_{\mu}\left[ W_k\  |\ \{\exists  x\in\mathbb{Z}^d\ \text{with}\ |x|>2^{(1/\alpha-\varepsilon)k}\ \text{s.t.}\ \omega_{0,x}=1\}\right]=O(2^{-\rho k}).
\end{align}
\end{proposition}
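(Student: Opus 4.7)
The plan is to adapt a Kesten-type martingale decomposition to the truncated increments. Throughout, set $n := 2^k$ and $R := 2^{(1/\alpha-\varepsilon)k}$, and write $Z_j := (X_j - X_{j-1})\mathbf{1}_{\{|X_j - X_{j-1}|\leq R\}}$. Let
\[
v_R(x) := \quenchedE^\omega[Z_j \mid X_{j-1}=x] = \frac{1}{\deg_\omega(x)}\sum_{y\sim x,\,|y-x|\leq R}(y-x)
\]
denote the truncated conditional drift; then $Z_j = M_j + v_R(X_{j-1})$ with $M_j := Z_j - v_R(X_{j-1})$ a martingale difference under $\quenchedP^\omega$. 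By the triangle inequality, \eqref{ineq:short} reduces to bounding, in annealed $L^1$, the maxima over $m\leq n$ of $\sum_{j\leq m}M_j$ and $\sum_{j\leq m}v_R(X_{j-1})$ separately, each divided by $2^{k/\alpha}$.

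For the martingale part I would apply Doob's $L^2$ maximal inequality together with orthogonality of martingale differences to obtain
\[
\annealedE_\nu\quenchedE^\omega\!\Big[\max_{m\leq n}\Big|\sum_{j=1}^{m}M_j\Big|^2\Big] \leq 4n\,\annealedE_\nu\quenchedE^\omega[|Z_1|^2].
\]
Because $\nu$ is the $\deg_\omega(0)$-biased version of $\mu$, the factor $\deg_\omega(0)^{-1}$ appearing in $\quenchedE^\omega[|Z_1|^2]$ is cancelled, so $\annealedE_\nu\quenchedE^\omega[|Z_1|^2]\leq C\sum_{0<|y|\leq R}|y|^2 P(y) = O(R^{2-\alpha})$ by \eqref{eq:deflrp}. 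After dividing by $2^{2k/\alpha}$ this becomes $O(n^{1-2/\alpha}R^{2-\alpha}) = O(2^{-(2-\alpha)\varepsilon k}) = O(2^{-2\rho k})$, which gives the target $O(2^{-\rho k})$ bound in $L^1$ by Cauchy-Schwarz.

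The main obstacle, which does not arise in \cite{crawford2013simple} because the triangle inequality already sufficed there for $\alpha<1$, is the drift sum $\sum_{j=1}^m v_R(X_{j-1})$. The symmetry $P(y)=P(-y)$ transfers to $\mu_0$ via invariance of $\{0\in\mathcal{C}^\infty\}$ under $y\mapsto -y$ and yields $\annealedE_{\nu_0}[v_R(0)]=0$, so this sum is a centred additive functional of the reversible environment-viewed-from-the-particle chain $\omega_j := \tau_{X_j}\omega$ with invariant measure $\nu_0$. I would bound its annealed variance via a Kipnis-Varadhan style estimate
\[
\annealedE_{\nu_0}\quenchedE^\omega\!\Big[\Big|\sum_{j=0}^{m-1}v_R(\omega_j)\Big|^2\Big] \leq 2m\sum_{t\geq 0}\big|\langle v_R, \Pi_\omega^t v_R\rangle_{\nu_0}\big|,
\]
where $\Pi_\omega$ denotes the transition operator of $(\omega_j)$, and argue that the sum on the right is $O(R^{2-\alpha})$ by exploiting locality of $v_R$: it depends only on edges of length $\leq R$ incident to the origin, so $v_R(\omega_j)$ and $v_R(\omega_l)$ are essentially independent once $|X_l - X_j|>2R$. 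Since the walk typically travels $\sim t^{1/\alpha}$ by time $t$, the correlation vanishes past $t\sim R^\alpha$, giving the desired summability. The maximum over $m$ is recovered either via a dyadic decomposition that absorbs a logarithm into $\varepsilon$, or by constructing a corrector $\psi$ solving $\psi - \Pi_\omega\psi = v_R$ and applying Doob to the resulting martingale.

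Assertion \eqref{eq:shortsteps} follows from \eqref{ineq:short} by Markov's inequality together with Borel-Cantelli along $k\in\mathbb{N}$, using that the four measures $\mu,\mu_0,\nu,\nu_0$ are mutually absolutely continuous on $\{0\in\mathcal{C}^\infty\}$. For the conditional estimate \eqref{ineq:short-condition}, the conditioning event has $\mu$-probability comparable to $R^{-\alpha}=2^{-(1-\alpha\varepsilon)k}$, so its reciprocal is polynomial in $n$; I would absorb it by first upgrading the $L^1$ bound above to an $L^p$ bound for $p$ large (using higher-moment versions of Doob in the martingale step and Chebyshev in the drift step) and then applying H\"older's inequality, with the exponent $\rho$ adjusted by a factor that vanishes as $p\to\infty$. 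The principal technical challenge will be the summability of the correlations $\langle v_R, \Pi_\omega^t v_R\rangle_{\nu_0}$: proving that the walk decorrelates distant copies of $v_R$ fast enough to recover the CLT-scale $O(nR^{2-\alpha})$ bound for the drift variance.
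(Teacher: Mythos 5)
Your martingale decomposition $Z_j = M_j + v_R(X_{j-1})$ and the treatment of the martingale part via Doob and orthogonality are exactly what the paper does, and your $O(2^{-2\rho k})$ rate for the $L^2$ norm of the martingale maximum is correct. The divergence from the paper is in how you handle the drift sum $\sum_{j\le m} v_R(X_{j-1})$: you propose a Kipnis--Varadhan / autocorrelation estimate for the centred additive functional of the environment chain, whereas the paper uses Kesten's time-reversal trick. The paper introduces $Y_k := X_n - X_{n-k}$, notes that $\sum_k Z_k$ has the same distribution whether formed from $X$ or from $Y$, decomposes both as martingale-plus-drift, and observes that the two drift sums $\sum_k L_\omega(X_{k-1})$ and $\sum_k L_\omega(X_{n-k})$ cancel exactly at every step where the walk did not make a long jump. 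What survives is supported on the rare time indices where the step length exceeds $\xi n^{1/\alpha}$, and at such indices independence of the local drift from the presence of a long edge under $\mu$ yields a first-moment bound. This avoids any spectral or decorrelation input altogether.

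The gap in your route is the required bound $\sum_{t\ge 0}\bigl|\langle v_R,\Pi_\omega^t v_R\rangle_{\nu_0}\bigr| = O(R^{2-\alpha})$. The $t=0$ term alone is already $\langle v_R,v_R\rangle_{\nu_0}\asymp R^{2-\alpha}$, and your heuristic --- that the correlation dies once the walk has left a ball of radius $2R$, i.e. after $t\sim R^\alpha$ steps --- gives no decay within that window. A naive count therefore yields $R^\alpha\cdot R^{2-\alpha}= R^2$, and plugging $R^2$ in place of $R^{2-\alpha}$ gives $nR^2/n^{2/\alpha}=2^{k(1-2\varepsilon)}$, which diverges rather than vanishes for small $\varepsilon$. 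To salvage your route you would need genuine polynomial decay of $\langle v_R,\Pi_\omega^t v_R\rangle$ in $t$ within the window $t\lesssim R^\alpha$, and that is a nontrivial quantitative homogenization statement for the environment-viewed-from-the-particle on the LRP cluster which you do not establish and which is not available off the shelf; the claim that $v_R(\omega_0)$ and $v_R(\omega_t)$ are ``essentially independent once $|X_t|>2R$'' does not follow from disjointness of the underlying edge sets, because the trajectory that carries the walk from $0$ to $X_t$ correlates the two. This is the step you flag as ``the principal technical challenge,'' and it is indeed the crux; the paper's time-reversal argument is designed precisely to sidestep it. Separately, for \eqref{ineq:short-condition} your $L^p$-upgrade-plus-H\"older plan is workable in principle but heavier than necessary: the paper simply deletes the one long edge at the origin, observes that the resulting environment is $\mu$ conditioned on a high-probability event, and bounds the conditional expectation by the sum of two unconditional ones for walks started at the two endpoints.
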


\begin{proof}
%We use a variation of a  method that dates back to Kesten. 
Let $n\in \N$ and  $\katan >0$. For every $x\in\Z^d$ we define its truncated local drift 
\[
L_\omega(x) = \sum_{y: |y-x| \leq \katan n^{1/\alpha}} y\cdot P_{\omega}(x,x+y),
\]
where $P_{\omega}(x,x+y) := \quenchedP^{\omega}_{x}(X_1 = x+y)$ is the transition probability under $\quenchedP^{\omega}$ from $x$ to $x + y$.

Then notice that 
\[
\left(M_l =  \sum_{k=1}^l \left\{(X_k - X_{k-1})\cdot{\bf 1}_{\left\{|X_k - X_{k-1}| \leq \katan n^{1/\alpha}\right\}} - L_\omega(X_{k-1})\right\}\right)_{l=0,\ldots,n}
\]
is a $P^{\omega}_x$-martingale.
We let $n=2^k$ and $\xi=n^{-\varepsilon}=2^{-\varepsilon k}$ later.
 The (annealed) variance of each step of $(M_l)$ is at most
\[
\sum_{j:|j| \leq \katan n^{1/\alpha}} |j|^2 |j|^{-s} \leq C\sum_{k = 1}^{\katan n^{1/\alpha}} k^{d+1-s} = C\sum_{k = 1}^{\katan n^{1/\alpha}} k^{1-\alpha} \leq
C \left(\katan n^{1/\alpha}\right)^{2-\alpha} = C\katan^{2-\alpha} n^{\frac{2}{\alpha} - 1}.
\]
Now write $Y_k = X_{n} - X_{n-k}, \ k=0,\ldots,n$. By stationarity and reversibility, $(Y_n)$ is also $P_\nu$ distributed, and 

\[
\sum_{k=1}^n (X_k - X_{k-1})\cdot{\bf 1}_{\left\{|X_k - X_{k-1}|\leq\xi n^{1/\alpha}\right\}} = -\sum_{k=1}^n (Y_k - Y_{k-1})\cdot{\bf 1}_{\left\{|Y_k - Y_{k-1}|\leq\xi n^{1/\alpha}\right\}}.
\]
Note that 
\[
\left(M'_l =  \sum_{k=1}^l \{(Y_k - Y_{k-1})\cdot{\bf 1}_{\left\{|Y_k - Y_{k-1}| \leq \katan n^{1/\alpha}\right\}} - L_\omega(X_{n-k})\}\right)_{l=0,\ldots,n}
\]
is a martingale with the same distribution as $(M_l)$.
Write
\begin{align}\label{def:w(n)}
W(n) = \max_{m\leq n}\left|\sum_{k=1}^m (X_k - X_{k-1})\cdot{\bf 1}_{\left\{|X_k - X_{k-1}| \leq \katan n^{1/\alpha}\right\}}\right|.
\end{align}
Then for every $m\leq n$,
\begin{align*}
&\ \ \ 2\sum_{k=1}^m (X_k - X_{k-1})\cdot{\bf 1}_{\left\{|X_k - X_{k-1}| \leq \katan n^{1/\alpha}\right\}} \\
&= \sum_{k=1}^m (X_k - X_{k-1})\cdot{\bf 1}_{\left\{|X_k - X_{k-1}| \leq \katan n^{1/\alpha}\right\}}  
+\sum_{k=n - m}^n (Y_k - Y_{k-1})\cdot{\bf 1}_{\left\{|Y_k - Y_{k-1}|\leq \xi n^{1/\alpha}\right\}}
\end{align*}

and therefore

\begin{align}\label{eq:decompos_kest}
2W
 &\leq
\max_{m\leq n} \left| \sum_{k=1}^m (X_k - X_{k-1})\cdot{\bf 1}_{\left\{|X_k - X_{k-1}|\leq \xi n^{1/\alpha}\right\}} \right| \nonumber\\
&\hspace{25mm}+ \max_{m\leq n}\left| \sum_{k=m-n}^n (Y_k - Y_{k-1})\cdot{\bf 1}_{\left\{|Y_k - Y_{k-1}|\leq \xi n^{1/\alpha}\right\}}\right|\nonumber
\\
&\leq \max_{m\leq n}\big|M_m\big| +  \max_{m\leq n}\big|M'_m\big| + \big|M'_n\big| \nonumber\\
&\hspace{10mm}+\sum_{k=1}^n \big|L_\omega(X_{k})\big|\cdot {\bf 1}_{\left\{|X_k - X_{k-1}|> \xi n^{1/\alpha}\right\}} +\sum_{k=0}^{n-1} \big|L_\omega(X_{k})\big|\cdot{\bf 1}_{\left\{|X_{k+1} - X_{k}|> \xi n^{1/\alpha}\right\}}.
\end{align}

Write
\[
V_1^{(n)} = \max_{m\leq n}\big|M_m\big| +  \max_{m\leq n}\big|M'_m\big| + \big|M'_n\big|,
\]
and 
\begin{align}\label{def:v_2}
V_2^{(n)} = \sum_{k=1}^n \big|L_\omega(X_{k})\big|\cdot {\bf 1}_{\left\{|X_k - X_{k-1}|> \xi n^{1/\alpha}\right\}} +\sum_{k=0}^{n-1} \big|L_\omega(X_{k})\big|\cdot{\bf 1}_{\left\{|X_{k+1} - X_{k}|> \xi n^{1/\alpha}\right\}}
\end{align}
for the two terms of \eqref{eq:decompos_kest}.

Now
\[
\annealedE_{\nu}(M_n ^2) =\annealedE_{\nu}(M_n^{\prime 2}) \leq n \cdot C\katan^{2-\alpha} n^{\frac{2}{\alpha} - 1} = C\katan^{2-\alpha} n^{\frac{2}{\alpha}},
\]
and by Doob's inequality
\[
\annealedE_{\nu}\left(\max_{m\leq n} M_m ^2\right) \leq 4\annealedE_{\nu}(M_n ^2) = 4 C\katan^{2-\alpha} n^{\frac{2}{\alpha}},
\]
and similarly for $(M'_n)$, so
\begin{equation}\label{eq:cv1}
\annealedE_{\nu}\left[(V_1^n)^2\right] \leq C\katan^{2-\alpha} n^{\frac{2}{\alpha}}.
\end{equation}

We now control the first moment of $V_2^{(n)}$. 
Note that 
$${\bf 1}_{\left\{\left\|X_k - X_{k-1}\right\| > \katan n^{1/\alpha}\right\}} \leq 
{\bf 1}\left\{\exists y\in\mathbb{Z}^d\ \text{with}\ |y| > \katan n^{1/\alpha}\ \text{s.t.}\ \omega(X_k, X_k + y) = 1 \right\},$$
 and that under the product measure $\mu$, the variables 
 $${\bf 1}\left\{\exists y\in\mathbb{Z}^d\ \text{with}\ |y| > \katan n^{1/\alpha}\ \text{s.t.}\ \omega(X_k, X_k + y) = 1 \right\}$$ 
 and $\big|L_\omega(X_{k})\big|$ are independent.

We get that
\begin{eqnarray}\label{ineq:loc-dri}
&& \annealedE_{\nu} 
\left[
\big|L_\omega(X_{k})\big| \cdot{\bf 1}_{\left\{|X_k - X_{k-1}| > \katan n^{1/\alpha}\right\}}
\right]\nonumber\\
&\leq &
\annealedE_{\nu} \left[
\big|L_\omega(X_{k})\big| \cdot{\bf 1}_{\{\exists y\in\mathbb{Z}^d\ \text{with}\ |y| > \katan n^{1/\alpha}\ \text{s.t.}\ \omega(X_k, X_k + y) = 1\}}\right]\nonumber\\
&= &
\annealedE_{\nu} \left[
\big|L_\omega(0)\big| \cdot{\bf 1}_{\{\exists y\in\mathbb{Z}^d\ \text{with}\ |y| > \katan n^{1/\alpha}\ \text{s.t.}\ \omega(0, y) = 1\}}
\right]\nonumber\\
&=&\frac{1}{\annealedE_{\mu}[\deg_{\omega}(0)]}
\annealedE_{\mu} 
\left[
\left|\sum_{|y|\leq\xi n^{1/\alpha}}y\cdot\omega(0,y)\right| 
\cdot{\bf 1}_{\{\exists y\in\mathbb{Z}^d\ \text{with}\ |y| > \katan n^{1/\alpha}\ \text{s.t.}\ \omega(0, y) = 1\}}
\right]\nonumber\\
&=&
C\annealedE_{\mu}\left[\left|\sum_{|y|\leq\xi n^{1/\alpha}}y\cdot\omega(0,y)\right|\right]
\cdot\mu\left(\exists y\in\mathbb{Z}^d\ \text{with}\ |y| > \katan n^{1/\alpha}\ \text{s.t.}\ \omega(0, y) = 1\right)
\end{eqnarray}
where the first equality follows from the invariance of $\nu$ w.r.t. the point of view of the walker, and the last inequality follows from H\"older's inequality and the independence of 
$\left|\sum_{|y|\leq\xi n^{1/\alpha}}y\cdot\omega_{0,y}\right|$ and $\left\{\exists_{|y| > \katan n^{1/\alpha}} \omega(0,  y) = 1 \right\}$ under $\mu$.

Therefore by \eqref{def:v_2} and \eqref{ineq:loc-dri}, we have that

\begin{align}\label{eq:ctrlv2}
&\ \ \ \ \ \annealedE_{\nu}(V_2^{(n)})\nonumber\\
&\leq 2Cn \annealedE_{\mu}\left[\left|\sum_{|y|\leq\xi n^{1/\alpha}}y\cdot\omega(0,y)\right|\right]
\cdot\mu\left(\exists y\in\mathbb{Z}^d\ \text{with}\ |y| > \katan n^{1/\alpha}\ \text{s.t.}\ \omega(0, y) = 1\right)\nonumber\\
&\leq Cn\cdot\annealedE_{\mu}\left[\left|\sum_{|y|\leq\xi n^{1/\alpha}}y\cdot\omega(0,y)\right|^2\right]^{1/2}\cdot (\xi n^{1/\alpha})^{-\alpha}\nonumber\\
&\leq Cn\cdot (\katan^{2-\alpha} n^{\frac{2}{\alpha} - 1})^{1/2}\cdot \xi^{-\alpha}n^{-1}\nonumber\\
&\leq C\xi^{\frac{2-\alpha}{2}}n^{\frac{2-\alpha}{2\alpha}}.
\end{align}

Now let $n = 2^k$ and $\katan = n^{-\hezka} = 2^{ -\varepsilon k}$.
Then the expression in \eqref{eq:shortsteps} is bounded by
\begin{equation}\label{eq:wdcmps}
\frac{V_1^{(2^k)}}{2^{k/\alpha}} + \frac{V_2^{(2^k)}}{2^{k/\alpha}}.
\end{equation}
We control each of the two terms separately though similarly.

By \eqref{eq:cv1},
\begin{equation}\label{eq:smb1}
\sum_{k=1}^\infty \annealedE_{\nu}\left[\left(\frac{V_1^{(2^k)}}{2^{k/\alpha}}\right)^2\right] 
\leq \sum_{k=1}^\infty 2^{-2k/\alpha}\cdot C\katan^{2-\alpha} \cdot 2^{2k/\alpha}
= C\sum_{k=1}^\infty 2^{-\hezka(2-\alpha)k } 
< \infty,
\end{equation}

and by \eqref{eq:ctrlv2} %if we choose $p$ close enough to $1$ (e.g. so that $1 - 1/p<0.1$),
\begin{align}\label{eq:smb2}
\sum_{k=1}^\infty \annealedE_{\nu}\left[\frac{V_2^{(2^k)}}{2^{k/\alpha}}\right] 
&\leq C\sum_{k=1}^\infty 2^{-k/\alpha}\cdot2^{-\frac{(2-\alpha)\varepsilon}{2} k}\cdot 2^{\frac{2-\alpha}{2\alpha}k}\nonumber\\
&\leq C\sum_{k=1}^{\infty}2^{-(\frac{1}{2}+(1-\frac{\alpha}{2})\varepsilon) k}<\infty.
\end{align}

As 
\[
\annealedE_{\nu}\left[\left(\frac{V_1^{(2^k)}}{2^{k/\alpha}}\right)^2\right] 
= \annealedE_{\nu}\left[\annealedE_{\nu}\left[\left.\left(\frac{V_1^{(2^k)}}{2^{k/\alpha}}\right)^2 \right| \omega\right] \right]
\]
and
\[
\annealedE_{\nu}\left[\frac{V_2^{(2^k)}}{2^{k/\alpha}}\right]
= \annealedE_{\nu}\left[\annealedE_{\nu}\left[\left.\frac{V_2^{(2^k)}}{2^{k/\alpha}} \right| \omega \right]\right],
\]
from \eqref{eq:smb1}, \eqref{eq:smb1} and Borel-Cantelli we get that 
\[
\lim_{k\to \infty }\annealedE_{\nu}\left[\left.\left(\frac{V_1^{(2^k)}}{2^{k/\alpha}}\right)^2 \right| \omega\right] =
\lim_{k\to \infty }\annealedE_{\nu}\left[\left.\frac{V_2^{(2^k)}}{2^{k/\alpha}} \right| \omega \right] = 0,
\]
for $\nu$-almost every $\omega$,
and from Jensen's inequality we get that 
\[
\lim_{k\to \infty }\annealedE_{\nu}\left[\left.\left(\frac{V_1^{(2^k)}}{2^{k/\alpha}}\right) \right| \omega\right] = 0
\]
for $\nu$-almost every $\omega$.

Write 
\begin{align}\label{def:w_k}
W_k = \max_{m<2^k} \left| \frac{1}{2^{k/\alpha}} \sum_{j=1}^m (X_j - X_{j-1})\cdot{\bf 1}_{\left\{|X_j - X_{j-1}| \leq 2^{k(1/\alpha - \hezka)}\right\}} \right|,
\end{align}
the quantity from \eqref{eq:shortsteps}.
Then, from \eqref{eq:wdcmps} we get that
\[
\lim_{k\to \infty} \quenchedE^\omega \left[ W_k
\right] = 0
\]
for $\nu$-almost every $\omega$, and thus, for $\nu$-almost every $\omega$ we have that  $W_k$ converges to zero  in probability, and thus also in distribution.
Since $\nu_0$, $\mu_0$ are both absolutely continuous w.r.t. $\nu$, this holds for them too. $\mu$ conditioned on the event $\{\deg 0 > 0\}$ is also absolutely continuous with respect to $\nu$, and on the event $\{\deg 0 = 0\}$ the random walk never leaves $0$, so \eqref{eq:shortsteps} is always $0$.
Therefore if we set $\rho:=(1-\alpha/2)\varepsilon$, then the estimate \eqref{ineq:short} is also already proven along the way.\par
We finally prove \eqref{ineq:short-condition}. 
Suppose that there is a vertex $y\in\mathbb{Z}^d$ with $|y|>2^{(1/\alpha-\varepsilon)k}$ such that $\omega_{0,y}=1$. If such an $y$ is not unique, choose the minimal one with respect to some deterministic order.
 Let $\omega^{(y)}$ be a new environment obtained by declaring the edge $(0,y)$ to be closed, then $\omega^{(y)}$ is distributed as $\mu$ conditioned on $$\{\text{for any}\ x\in\mathbb{Z}^d\ \text{with}\ |x|>2^{(1/\alpha-\varepsilon)k},\ \text{we have}\ \omega_{0,x}=0\}.$$
 Note that the probability of this event tends to $1$ as $k\to\infty$.
Moreover, it suffices to consider the case where there is the unique such vertex $y$ since the probability that there are multiple long edges attached to $0$ is significantly smaller.
Let $X^{(0)}$ and $X^{(y)}$ be two independent random walks on $\omega^{(y)}$ started at $0$ and $y$ respectively. Then   
$$ \annealedE_{\mu}\left[ W_k\  |\ \{\exists  x\in\mathbb{Z}^d\ \text{with}\ |x|>2^{(1/\alpha-\varepsilon)k}\ \text{s.t.}\ \omega_{0,x}=1\}\right]$$
 is smaller than the sum of expectations on the LHS of \eqref{ineq:short} applied for $X^{(0)}$ and $X^{(y)}$. 
 Note that the distribution of $\omega^{(y)}$ is $\mu$ conditioned on a typical event.
 Therefore for sufficiently large $k$, the expectations \eqref{ineq:short} applied for $X^{(0)}$ and $X^{(y)}$ are both bounded above by two times the expectation of $W_k$ under $\mathbb{E}^{\mu}$.
 Therefore, we get the desired estimate by using \eqref{ineq:short}.
 %\textcolor{red}{I believe this argument is correct but it is a bit sketchy. I'd be happy if you check this carefully.}
\end{proof}

%\section{The random walks and its many copies}\label{sec:rwcpy}
%\ref{sec:rwcpy}

\section{Some quantitative transience estimates}\label{sec:trnsest}
\newcommand{\mekadem}{\kappa}
\newcommand{\trvedge}{A}

In this section, we will recall some estimates which quantify the transience of a simple random walk on the LRP cluster. 

\subsection{Bounds on the size of finite clusters}
In this subsection, we introduce an upper bound on the size of second largest cluster, which is finite. 
We will need it to show that the effect of conditioning ${0\in\mathcal{C}^{\infty}}$, which breaks down independence, 
when we study a scaling limit of a simple random walk. 
\begin{lemma}\cite[Lemma 9.2]{crawford2013simple}\label{lem:aaa}
Consider the box $[-N,N]^d$ of size length $2N$ and sample all internal edges $\{x,y\}\in [-N,N]^d\times [-N,N]^d.$
Let $n_1\geq n_2\geq ...\geq n_m$ be a sequence of cluster sizes inside the box $[-N,N]^d$. Namely, $n_l$ is the size of the $l$-th largest open cluster in $[-N,N]^d$. Let $M$ be the largest open cluster in $[-N,N]-d$.
Then there exist $c_1,c_2,C,\upsilon_1,\upsilon_2>0$ independent of $N$, such that
\begin{align*}
&\mathbb{P}_{\mu}(n_2>(\log N)^{\upsilon_1})<c_1e^{-c_2(\log N)^2}\ \ \text{and}\\
&\mathbb{P}_{\mu}(0\ \text{is connected to the complement of }[-N,N]^d\ |\ 0\notin M)<CN^{-\upsilon_2}
\end{align*}
for any $N>0$.
\end{lemma}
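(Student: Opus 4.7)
The plan is to establish both estimates as consequences of the supercritical connectivity of long-range percolation in a box: there is one giant cluster of linear volume, and all other clusters are microscopic. The first bound controls the second-largest component directly, and the second bound is obtained from the first by noting that conditioning on $\{0 \notin M\}$ forces the cluster of $0$ to be small, after which we estimate the probability of a small cluster possessing a long edge that escapes the box.

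For the first bound, I would fix a candidate cluster $A \subset [-N,N]^d$ of size $k \geq (\log N)^{\upsilon_1}$ and control the probability that $A$ is exactly a connected component under the restricted percolation. The key observation is that this requires every potential edge between $A$ and $[-N,N]^d \setminus A$ to be closed, giving an upper bound of the form $\exp\bigl(-\sum_{x\in A,\, y\in[-N,N]^d\setminus A} p_{xy}\bigr)$. Since $s<2d$ in our regime, I would show that this weighted boundary sum grows like a positive power of $k$: using that $A$ has diameter at least $k^{1/d}$ by volume considerations, each $x\in A$ contributes $\gtrsim k^{(d-s)/d}$ to the weight toward the complement (accounting for the cube geometry and the location of $x$ relative to the boundary), producing a sum $\gtrsim k^{(2d-s)/d}$. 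To overcome the combinatorial explosion from ranging over possible subsets $A$, I would not enumerate arbitrary subsets but rather use a renormalization scheme: tile $[-N,N]^d$ by mesoscopic boxes of side $L\asymp (\log N)^{\upsilon_0}$, declare a box ``good'' if it meets the unique infinite cluster of the full LRP in a density-like proportion, and invoke the standard fact that the good boxes percolate with stretched-exponential corrections. Any putative cluster of size $(\log N)^{\upsilon_1}$ disjoint from the giant must lie inside the sparse set of bad boxes, whose probability of containing such a configuration is controlled by a Peierls-type argument yielding the $e^{-c(\log N)^2}$ rate when $\upsilon_1$ is chosen large enough in terms of $d$, $s$, $\upsilon_0$.

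For the second bound, I would combine the first estimate with a direct edge-boundary computation. On the event $\{0 \notin M\}$, either $0$ is disconnected from everything in the box, or it sits in a cluster of size at most $(\log N)^{\upsilon_1}$ with probability $1 - O(e^{-c(\log N)^2})$. Conditional on this event, let $\CC_0$ denote the cluster of $0$; since $|\CC_0|\leq (\log N)^{\upsilon_1}$, this cluster is contained in a ball of radius at most $(\log N)^{\upsilon_1}$ around $0$ (even taking worst-case branching), so every vertex of $\CC_0$ lies at $\ell^\infty$-distance $\geq N/2$ from $\partial[-N,N]^d$ for $N$ large. The probability that the cluster reaches $\Z^d\setminus [-N,N]^d$ through an external edge is then at most $\sum_{x\in \CC_0}\sum_{y\notin [-N,N]^d} p_{xy}\lesssim |\CC_0|\cdot N^{d-s}\leq (\log N)^{\upsilon_1}\cdot N^{-(s-d)}$, which is polynomially small in $N$. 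Choosing $\upsilon_2 < s-d$ absorbs the polylog factor and yields the desired estimate.

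The main obstacle will be the renormalization step in the first bound: making the counting versus probability trade-off yield precisely the $(\log N)^2$ exponent, rather than a weaker polylog power. The naive union bound over subsets of size $k$ is too wasteful because there are $\binom{N^d}{k}\approx N^{dk}$ of them and the boundary-sum lower bound $k^{(2d-s)/d}$ grows slower than $k\log N$. This forces one to restrict the enumeration to ``rigid'' configurations, which is precisely what the mesoscopic block argument achieves, but the bookkeeping — coordinating the choice of block scale $L$, the density threshold, and $\upsilon_1$ — is where the main technical work lies. Once this is set up, the second bound follows almost mechanically.
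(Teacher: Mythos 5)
The paper does not prove this lemma; it is imported directly from \cite[Lemma 9.2]{crawford2013simple}, so there is no ``paper proof'' to compare against and your proposal must be judged on its own.

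For the first bound your outline (isoperimetric lower bound on the closed-boundary weight of a size-$k$ set, followed by a coarse-graining/renormalization argument to avoid the ruinous $\binom{N^d}{k}$ enumeration) is a plausible strategy, and you correctly identify that the naive union bound over subsets fails because $k^{(2d-s)/d}\ll k\log N$. But you leave the crux --- coordinating the block scale, the density threshold, and $\upsilon_1$ so that the final rate is exactly $e^{-c(\log N)^2}$ rather than some weaker stretched-polylog --- as an acknowledged open step, so this half of the proposal is incomplete rather than wrong.

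The second bound has a genuine gap. You write that, because $|\CC_0|\le(\log N)^{\upsilon_1}$, the cluster $\CC_0$ ``is contained in a ball of radius at most $(\log N)^{\upsilon_1}$ around $0$ (even taking worst-case branching).'' That assertion is false for long-range percolation: a cluster of $k$ vertices need not be spatially localized, since a single internal long edge can carry the cluster from the origin to the boundary of the box, so worst-case branching is not the obstacle --- edge length is. Consequently one cannot conclude that every vertex of $\CC_0$ is at $\ell^\infty$-distance $\ge N/2$ from $\partial[-N,N]^d$, and the estimate $\sum_{x\in\CC_0}\sum_{y\notin[-N,N]^d}p_{xy}\lesssim|\CC_0|\,N^{d-s}$ does not follow. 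To close the gap one needs a separate estimate showing that a \emph{small} internal cluster of the origin is unlikely to reach far from $0$: for instance, if $|\CC_0|\le k$ yet some $x\in\CC_0$ has $|x|_\infty\ge N/2$, then the exploration of $\CC_0$ from $0$ must at some step reveal an internal edge of length $\ge N/(2k)$; union-bounding over the $\le k$ exploration steps and the $\le k$ currently active vertices gives a probability of order $k^{2}\cdot(N/k)^{-\alpha}\lesssim k^{2+\alpha}N^{-\alpha}$. Combined with your calculation on the complementary event that $\CC_0\subset[-N/2,N/2]^d$ (where each vertex really is at distance $\ge N/2$ from the boundary), this yields $O\bigl((\log N)^{c}N^{-\alpha}\bigr)$ and hence the claimed $CN^{-\upsilon_2}$ for any $\upsilon_2<\alpha=s-d$. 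Without that missing step, the argument as written does not establish the second inequality.
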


\subsection{Heat kernel estimates}
The paper \cite{crawford2012simple} and the later work \cite{can2021spectral} proved bounds on the heat kernel of the random walk on long range percolation. In \cite{crawford2013simple}, Crawford and Sly used these bounds to prove that various unwanted events are unlikely. The likely absence of those events was then used in the proof of the main result.
Here we provide an estimate in the same spirit of those in \cite{crawford2013simple}.
The following heat kernel estimates will play a very important role in what follows. 
\begin{proposition} \cite[Theorem 1]{crawford2012simple}\label{prop:hke}
Suppose either $s\in(d,d+2), d\geq2$, or $s\in(1,2), d=1$. Let $\alpha:=s-d$. Then there exist a measurable event $\Omega_1\subset\Omega$ with $\mu(\Omega_1)=1$, 
deterministic constants $C_1>0$ and $\theta>0$, depending on $(P(x))_{x\in\mathbb{Z}^d}$, and a family of random variables $T_x(\omega)>0$ with the following properties: 
\begin{enumerate}
\item for $x\in\mathbb{Z}^d$, we have $T_x(\omega)<\infty$ a.s. if and only if $x\in\mathcal{C}^{\infty}(\omega).$
\item For $x,y\in\mathcal{C}^{\infty}(\omega),$ it holds that
\begin{equation*}%\label{eq:can}
P^{\omega}_x(X_n = y) \leq C_1 {\rm deg}^{\omega}(y)n^{-\frac{d}{\alpha}} \big(\log n\big)^{\theta}
\end{equation*}
for $n\geq T_x(\omega)\vee T_y(\omega).$
\item For any $\zeta>0$, there exists $C(\zeta)>0$ such that
$$\mu(T_x>k|x\in\mathcal{C}^{\infty}(\omega))\leq C(\zeta)k^{-\zeta}.$$
\end{enumerate}

\end{proposition}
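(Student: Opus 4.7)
The plan is to combine a Nash-type functional inequality for the random walk on the infinite cluster with a quantitative bound on the local regularity of the cluster around $x$ and $y$. The exponent $d/\alpha$ is precisely what Nash's argument predicts when jumps of length $r$ contribute to the Dirichlet energy at rate $r^{-s} = r^{-(d+\alpha)}$; integrating this against the counting measure on balls recovers the on-diagonal decay $n^{-d/\alpha}$, up to a logarithmic correction that comes from fluctuations of the random environment.

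First I would set up the Dirichlet form $\EE_\omega(f,f) = \tfrac{1}{2}\sum_{x,y}\omega_{x,y}(f(x)-f(y))^2$ on $L^2(\CC^\infty, \deg_\omega)$ and establish an isoperimetric inequality on $\CC^\infty$: for every finite $A\subset\CC^\infty$ with $|A|\leq N$,
\begin{equation*}
|\partial_\omega A| \;\geq\; c\, |A|^{1-\alpha/d}(\log N)^{-\theta'},
\end{equation*}
with probability tending to one. This follows by comparing the expected number of open long edges crossing $A$ to its natural Bernoulli expectation: for each fixed shape $A$ the probability that the number of crossing edges is anomalously small is exponentially tiny in $|A|^{1-\alpha/d}$, and a Peierls-type sum over possible shapes -- controlled via the polynomial growth of $\Z^d$ and a coarse isoperimetric profile -- upgrades this into an almost sure uniform bound.

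From this inequality I would derive a weighted Nash inequality of the form
\begin{equation*}
\|f\|_2^{2(1+\alpha/d)} \;\leq\; C(\log n)^{\theta'}\, \EE_\omega(f,f)\, \|f\|_1^{2\alpha/d}
\end{equation*}
for functions $f$ supported on a ball of radius at most $n^{1/\alpha}$ in $\CC^\infty$, and then run the classical Nash argument (differentiate $\|P_t f\|_2^2$, integrate the resulting ODE) to obtain the on-diagonal bound $P^\omega_x(X_n = x) \leq C n^{-d/\alpha}(\log n)^\theta$. The off-diagonal bound then follows from reversibility and Cauchy-Schwarz, yielding the claimed factor $\deg_\omega(y)$ because $P^\omega_x(X_n=y)/\deg_\omega(y)$ is symmetric in $x,y$.

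Finally, I would define $T_x(\omega)$ as the smallest $n$ such that (i) $x\in\CC^\infty$, (ii) the isoperimetric inequality above holds for every $A\subset\CC^\infty\cap B(x, n^{1/\alpha})$, and (iii) no exceptional bad vertex (for instance, one with anomalously many or too few long edges) lies in $B(x, n^{1/\alpha})$. The polynomial tail $\mu(T_x>k \mid x\in\CC^\infty) \leq C(\zeta)k^{-\zeta}$ for arbitrary $\zeta>0$ would follow from a union bound over dyadic scales, each event failing with stretched-exponential probability. The hardest part, and the technical heart of the approach, is to prove the isoperimetric bound uniformly across all scales and shapes while losing only a polylogarithmic factor: bounded degree is not available in the LRP setting, so one must exploit the independence of edge indicators through a block renormalization argument or an Efron-Stein / Talagrand-type concentration tailored to the heavy-tailed edge distribution, and carefully track how the bad polylog overhead propagates through the Nash iteration.
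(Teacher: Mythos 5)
This proposition is not proved in the paper under review; it is quoted verbatim from Crawford and Sly's earlier work \cite{crawford2012simple}, so there is no ``paper's own proof'' to compare against here. Your outline (Nash/Faber--Krahn inequality with polylogarithmic corrections, the random regularity radius $T_x$, polynomial tails via a union bound over dyadic scales, the $\deg^\omega(y)$ factor from reversibility) is in the same spirit as the actual argument in \cite{crawford2012simple}, and parts (1)--(3) of your plan for $T_x$ are sound.

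However, the specific route you propose from the isoperimetric inequality to the Nash inequality has a genuine gap, and it is precisely in the regime $\alpha\in[1,2)$ (i.e.\ $s\in[d+1,d+2)$) that this paper needs the result. For $\alpha\ge 1$, the open-edge boundary of a ball $B_r$ is dominated by the nearest-neighbour scale: $\mathbb E\,|\partial_\omega B_r|\asymp r^{d-1}=|B_r|^{1-1/d}$, and the long edges contribute only a lower-order $r^{d-\alpha}$. So the quantity $|\partial_\omega A|$ simply does not see the long-range structure, and your stated inequality $|\partial_\omega A|\ge c|A|^{1-\alpha/d}(\log N)^{-\theta'}$, while true (because $1-\alpha/d<1-1/d$), is a \emph{weaker} statement than the $\Z^d$ isoperimetry and cannot by itself produce the $\alpha$-stable exponent. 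More to the point, the standard transfer to a Faber--Krahn/Nash inequality (Cheeger, or co-area together with the Cauchy--Schwarz step passing from the $L^1$ energy of $f^2$ to the $L^2$ energy of $f$) loses a factor of two in the exponent: from $|\partial_\omega A|\gtrsim|A|^{1-\alpha/d}$ one obtains at best $\lambda_1(A)\gtrsim|A|^{-2\alpha/d}$ and hence $p_n(x,x)\lesssim n^{-d/(2\alpha)}$, not $n^{-d/\alpha}$. The obstruction is visible already on test functions: for $f=\1_{B_r}$ one has $\EE_\omega(f,f)\asymp r^{d-1}$, whereas the smooth profile $f(x)=(1-|x|/r)_+$ has $\EE_\omega(f,f)\asymp r^{d-\alpha}$, which is strictly smaller when $\alpha>1$; the Faber--Krahn inequality $\lambda_1(B_r)\asymp r^{-\alpha}$ is saturated by the latter, not the former, and an edge-cut quantity only interrogates indicators. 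What is actually needed is a lower bound on the \emph{quadratic} Dirichlet form directly, e.g.\ a Poincar\'e inequality of the shape
\begin{equation*}
\bigl\| f-\bar f_r\bigr\|_{L^2(B_r,\deg_\omega)}^2 \;\le\; C\,r^{\alpha}(\log r)^{\theta'}\,\EE_\omega(f,f),
\end{equation*}
obtained by exploiting the abundance of open edges of length comparable to $r$ inside $B_r$ (each such edge lets you compare $f$ at far-apart points with weight $\asymp r^{-d-\alpha}$), together with a multiscale/renormalization argument to make the constant uniform over the cluster. Once a Poincar\'e or Faber--Krahn inequality with exponent $\alpha/d$ is established, the rest of your sketch — Nash iteration, off-diagonal bound via $p^\omega_n(x,y)/\deg_\omega(y)$ symmetry, and the definition and tail bound for $T_x$ — goes through as you describe.
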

Note that Can, Croydon and Kumagai \cite{can2021spectral} obtained matching lower bounds  under the additional assumption $P(y)=1$ for all $|y|_1=1$.
\par
In the proof, we will need the following corollary of Proposition \ref{prop:hke}
\begin{corollary}\label{cor:no-return}
Suppose that $d\geq2, s\in(d,d+2)$ and the system under consideration is percolating.
Then there exist $\varepsilon'\in\left(0,1-\frac{\alpha}{d}\right), \eta_3, \delta>0$
such that for any $i\geq0$ it holds that
\begin{align*}
\mathbb{P}_{\mu}\left(X_i\in \mathcal{C}^{\infty}\ \text{and}\ \exists i'\geq i+2^{(1-\varepsilon')k}\ \text{such that}\ X_{i'}\in B(X_i,2^{\delta k})\right)=O(2^{-\eta_3 k}).
\end{align*}
\end{corollary}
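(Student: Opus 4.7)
The plan is to apply the heat kernel upper bound of Proposition~\ref{prop:hke} after reducing to $i=0$. Set $n_0 := 2^{(1-\varepsilon')k}$ and $R := 2^{\delta k}$. The event forces $0 \in \mathcal{C}^\infty$, so by passing to $\mu_0$ and then to $\nu_0$ (which are mutually equivalent with bounded Radon--Nikodym derivative on $\{\deg^\omega(0)\geq 1\}$, automatic on $\{0\in\mathcal{C}^\infty\}$), and then using the $\nu_0$-stationarity of the environment seen from the walker, it will suffice to bound
$$\mathbb{P}_{\nu_0}\!\left(\exists j \geq n_0:\ X_j \in B(0,R)\right).$$

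Next, a union bound gives
$$\mathbb{P}_{\nu_0}\!\left(\exists j \geq n_0:\ X_j \in B(0,R)\right) \;\leq\; \sum_{j\geq n_0}\sum_{y\in B(0,R)} \mathbb{E}_{\nu_0}\!\left[P^\omega_0(X_j = y)\right].$$
I will split each summand according to whether $j \geq T_0(\omega)\vee T_y(\omega)$. On the good event, Proposition~\ref{prop:hke}(2) yields $P^\omega_0(X_j=y) \leq C_1 \deg^\omega(y)\, j^{-d/\alpha}(\log j)^\theta$, and $\mathbb{E}_{\nu_0}[\deg^\omega(y)]$ is bounded uniformly in $y$ because the product $\deg^\omega(0)\deg^\omega(y)$ has uniformly bounded $\mu$-expectation (all moments of the degree are finite since $s>d$). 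On the complement I will use the trivial bound $P^\omega_0(X_j=y)\leq 1$ together with property (3) of Proposition~\ref{prop:hke}, applied with $\zeta$ arbitrarily large, to make that contribution super-polynomially small in $j$ and summable even after multiplying by $|B(0,R)| \lesssim R^d$. Summing the main term over $j\geq n_0$ and $y\in B(0,R)$ then produces a bound of order $R^d\, n_0^{1-d/\alpha}(\log n_0)^\theta$, where the $j$-sum converges precisely because $d/\alpha > 1$ (which uses $d\geq 2$ and $\alpha<2$).

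Substituting $R = 2^{\delta k}$ and $n_0 = 2^{(1-\varepsilon')k}$ turns the bound into $2^{[\delta d - (1-\varepsilon')(d/\alpha - 1)]k}$ up to polylog factors. The hypothesis $\varepsilon' < 1 - \alpha/d$ gives
$$(1-\varepsilon')\left(\tfrac{d}{\alpha} - 1\right) \;>\; \tfrac{\alpha}{d}\left(\tfrac{d}{\alpha} - 1\right) \;=\; 1 - \tfrac{\alpha}{d} \;>\; 0,$$
so picking $\delta>0$ sufficiently small makes the exponent strictly negative, yielding the claimed $O(2^{-\eta_3 k})$ with $\eta_3 := (1-\varepsilon')(d/\alpha - 1) - \delta d > 0$.

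The main technical obstacle is controlling the threshold times $T_x(\omega)$ simultaneously over all $\asymp R^d$ vertices in $B(0,R)$, so that the error contribution from $\{j < T_0 \vee T_y\}$ does not swamp the main term: this is exactly where the flexibility built into Proposition~\ref{prop:hke}(3)—decay faster than any polynomial—is indispensable (possibly combined with Cauchy--Schwarz to absorb the weight $\deg^\omega(0)$ coming from the $\nu_0$-density). A secondary subtle point is the transition between the $\mu_0$ and $\nu_0$ measures, but this is harmless since $\deg^\omega(0)\geq 1$ on $\{0\in\mathcal{C}^\infty\}$ and $\mathbb{E}_\mu[\deg^\omega(0)]<\infty$.
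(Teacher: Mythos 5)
Your proposal is correct and matches the paper's intent: the paper's own proof of this corollary is a one-line appeal to Proposition~\ref{prop:hke} together with the observation that the hypothesis $\varepsilon' < 1-\alpha/d$ forces $\frac{d}{\alpha}(1-\varepsilon')>1$, and your argument is a careful unwinding of exactly that appeal (reduction to $i=0$ via $\nu_0$-stationarity of the environment seen from the particle, union bound over $y\in B(0,2^{\delta k})$ and $j\geq 2^{(1-\varepsilon')k}$, heat kernel bound on $\{j\geq T_0\vee T_y\}$, and property (3) with large $\zeta$ plus Cauchy--Schwarz to control the bad set and the $\deg^\omega(0)$ weight). The only cosmetic difference is that you sum the convergent series $\sum_j j^{-d/\alpha}\asymp n_0^{1-d/\alpha}$ rather than the cruder bound $2^k n_0^{-d/\alpha}$, which yields the same conclusion.
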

\begin{proof}
Noticing that we chose $\varepsilon'$ so that $\frac{d}{\alpha}\cdot(1-\varepsilon')>1$,
this estimate follows immediately from Proposition \ref{prop:hke}.
\end{proof}

\subsection{Transience estimates}
Following \cite{crawford2013simple} we consider many samples of the random walk in the same environment. For given $k$, we first sample the environment $\omega$ according to $\nu_0$, and then we sample $k^3$ random walks $\left(\big(X_i^\ell\big)_{i=0,\ldots 2^k}\right)_{\ell=1,\ldots k^3}$, all distributed according to $\quenchedP^{\omega}_0$ and independent of each other conditioned on $\omega$. By a slight abuse of notation we use $\annealedP_{\nu}$ to denote the joint distribution of those walks. A more precise, though more cumbersome, notation would be
\[
\int_\Omega \left(\quenchedP^\omega\right)^{k^3} d\nu(\omega).
\]
We will recall several estimates shown in \cite{crawford2013simple} which play an important role for the coupling argument to be introduced in the next section.\par
The first estimate asserts that for any $i,j\in[k^3]$ with $i\neq j$, the walks $(X^i_l)_{0\leq l\leq 2^k},(X^j_l)_{0\leq l\leq 2^k}$ do not find the same long edge with high probability. Define the event
\begin{align*}
F^*(k)&:=\{\exists l,l'\in[2^k], \exists u,v\in\mathbb{Z}^d\ \text{s.t.}\ \omega_{u,v}=1,\ 
|u-v|\geq 2^{(1/\alpha-\varepsilon)k}\\
&\hspace{85mm} \text{and}\ X_l^1, X_{l'}^2\in\{u\}\cup\{v\}\}.
\end{align*}
\begin{lemma}\label{lem:cs4.1} \cite[Proposition 4.1]{crawford2013simple}
There exist $\eta_1>0$ and $C>0$ such that
$$\mathbb{P}_{\mu}(F^*(k))\leq C2^{-\eta_1 k}.$$
\end{lemma}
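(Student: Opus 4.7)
I would prove the bound by a first-moment argument on the (annealed) expected number of long edges touched at both endpoints by the two walks. Let $R := 2^{(1/\alpha - \varepsilon)k}$ and define the quenched hitting probability
\[
h^\omega(y) := P^\omega_0(\exists\,l\le 2^k:\, X_l = y).
\]
Since $X^1$ and $X^2$ are independent conditionally on $\omega$, the indicator of $F^*(k)$ is dominated by
\[
\sum_{(u,v):\,|u-v|\ge R,\,\omega_{u,v}=1} \bigl(h^\omega(u)+h^\omega(v)\bigr)^2,
\]
so it suffices to show this sum has annealed expectation $O(2^{-\eta_1 k})$.

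The first step would be to decouple $\omega_{u,v}$ from the walks. I would use the single-edge removal device already used at the end of the proof of Proposition~\ref{prop:shortsteps}: replacing $\omega$ by $\omega\setminus\{(u,v)\}$ changes the walks only on the event that one of them actually traverses $(u,v)$, which is a lower-order contribution after summing over edges. Modulo that correction, $\omega_{u,v}$ becomes independent of the $h^\omega$'s, and the annealed expectation factors as
\[
\sum_{(u,v):\,|u-v|\ge R} P(u-v)\;\mathbb{E}_\mu\!\left[\bigl(h^\omega(u)+h^\omega(v)\bigr)^2\right].
\]

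For the pointwise estimate of $h^\omega$, I would invoke Proposition~\ref{prop:hke} together with the polynomial tail of $T_x$ to get, outside an exceptional $\omega$-set of $\mu$-measure $o(2^{-\eta k})$, the uniform bound $h^\omega(y)\le Ck^\theta\deg_\omega(y)$. Standing alone this is too weak, since $\sum_y h^\omega(y)$ equals the expected size of the walk's range, of order $2^k$. The fix is to split $\sum_{l\le 2^k} P^\omega_0(X_l = y)$ at the stable scale $l \asymp |y|^\alpha$ and use a one-big-jump tail estimate $P^\omega_0(|X_l|\ge |y|) \lesssim l\,|y|^{-\alpha}$ for early times $l < |y|^\alpha$, yielding the sharper bound $h^\omega(y) \lesssim k^\theta |y|^{-(d-\alpha)}$ for large $|y|$. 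Restricting the outer sum to $|u|, |v| \le 2^{(1/\alpha+\delta)k}$ (the complement is irrelevant since neither walk visits it with good probability), the inner sum over $v$ produces the long-edge density factor $R^{-\alpha} = 2^{-(1-\alpha\varepsilon)k}$, while $\sum_{|u|\le 2^{(1/\alpha+\delta)k}} |u|^{-2(d-\alpha)}$ contributes at most $2^{(1/\alpha+\delta)(2\alpha-d)_+ k}$. Because $d/\alpha > 1$ throughout the regime $d\ge 2$, $\alpha\in[1,2)$, the overall exponent is strictly negative for $\varepsilon,\delta$ small, giving some $\eta_1 > 0$.

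The main obstacle will be the upgrade of Proposition~\ref{prop:hke} to a hitting-probability bound that decays in $|y|$: the heat-kernel bound provided there is uniform in $y$, and the real technical work lies in carrying out the one-big-jump split for the random walk on the actual percolation cluster (rather than for an i.i.d.\ walk with $\alpha$-stable jumps). Once this refined hitting bound is in place, the remainder is routine first-moment bookkeeping and an appropriate choice of the small parameter $\varepsilon$.
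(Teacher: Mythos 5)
The paper does not actually prove this statement: it cites \cite[Proposition~4.1]{crawford2013simple} verbatim (with the cutoff in $F^*(k)$ silently changed from $\lambda 2^{k/\alpha}$ to $2^{(1/\alpha-\varepsilon)k}$), so there is no ``paper's own proof'' to compare against. That said, your outline is a plausible first-moment framing, but it relies on a step that is genuinely hard and, more importantly, unnecessary.

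The obstacle you identify --- upgrading Proposition~\ref{prop:hke} to a hitting bound $h^\omega(y)\lesssim k^\theta|y|^{-(d-\alpha)}$ --- is a real one, and your one-big-jump sketch does not close it. The estimate $P^\omega_0(|X_l|\ge|y|)\lesssim l|y|^{-\alpha}$ controls the probability of being at \emph{distance} $|y|$, not of hitting the single site $y$; summing it over $l<|y|^\alpha$ gives something that grows in $|y|$, not the needed $|y|^{\alpha-d}$ decay. What you would really need is an \emph{off-diagonal} heat kernel upper bound of the stable form $P^\omega_0(X_l=y)\lesssim l^{-d/\alpha}\bigl(1+|y|/l^{1/\alpha}\bigr)^{-(d+\alpha)}$, which is considerably stronger than the on-diagonal bound supplied in Proposition~\ref{prop:hke} and is not established in \cite{crawford2012simple} in the form you require.

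The good news is that the pointwise-decaying bound can be avoided entirely, and this is what makes the Crawford--Sly argument elementary. Split the first moment into the diagonal contribution (both walks hit the same endpoint $u$) and the cross contribution (walk $1$ hits $u$, walk $2$ hits $v$). For the cross term you never need $h^\omega(y)$ to decay: since $|u-v|\ge R:=2^{(1/\alpha-\varepsilon)k}$ one has the uniform bound $P(u-v)\le C R^{-s}$, and then
\[
\sum_{\substack{u,v\\|u-v|\ge R}} P(u-v)\,\mathbb{E}_\mu\bigl[h^\omega(u)h^\omega(v)\bigr]
\;\le\; C R^{-s}\,\mathbb{E}_\mu\Bigl[\bigl(\textstyle\sum_u h^\omega(u)\bigr)^2\Bigr]
\;\le\; C R^{-s}\,2^{2k},
\]
because $\sum_u h^\omega(u)$ is the expected range and is at most $2^k$. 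With $s=d+\alpha$ the exponent is $(1-d/\alpha+s\varepsilon)k$, negative once $\varepsilon$ is small, since $d/\alpha>1$ throughout the regime considered. For the diagonal term, $\sum_u h^\omega(u)^2$ is the expected number of collisions of two independent walks, which is bounded by $2^{(1-\tilde\upsilon)k}$ using only the on-diagonal heat kernel estimate (this is \cite[Lemma~9.10]{crawford2013simple}, quoted later in the paper with $\tilde\upsilon=d/\alpha-1$); multiplying by the probability $\sim R^{-\alpha}=2^{-(1-\alpha\varepsilon)k}$ that $u$ carries a long edge again gives a negative exponent for small $\varepsilon$. So the bound you want follows from the on-diagonal heat kernel alone plus the crude range count; there is no need to prove the refined off-diagonal hitting estimate, and indeed the paper never does.

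Your edge-removal device to decouple $\omega_{u,v}$ from the walks is fine and matches what is done elsewhere in the paper, but it should be carried out before invoking the heat kernel estimates so that the walk is run in the independent background.
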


The next estimate allows us to ignore the effect of various undesirable behaviors of random walks. We start with introducing a family of events which we do not wish to occur. We refer to \cite[Section 4, 9]{crawford2013simple} for detailed explanations.\par
Let $\gamma,\delta>0$ be positive numbers which we later choose to be sufficiently small. 
For $j\in[2^k]$ define
\begin{align*}
D_j(\varepsilon,k):=\{&\exists v\in\mathbb{Z}^d\ \text{with}\ \omega_{X_j^1,v}=1,|v-X_j^1|>2^{(1/\alpha-\varepsilon)k}
\text{and}\ \exists J\in[2^k]\\
&\hspace{60mm} \text{s.t.}\ X^1_J=v,\ (o,v)\notin\{(X^1_i,X^1_{i+1})\}_{i\leq J}\}.
\end{align*}
This is the event that there is a long edge at $X_j^1$ and the walk visits its other endpoint $v$ without crossing $(0,v)$.\par
We next let
\begin{align*}
E_j(\varepsilon,\delta,k):=\{&\exists u,v\in\mathbb{Z}^d\ \text{with}\ |v-X_j^1|>2^{(1/\alpha-\varepsilon)k}, \min(|u-X_j^1|,|u-v|)\geq2^{\delta k}\\
 &\hspace{45mm}\text{s.t.}\ \omega_{X_j^1,v}=1\ \text{and either}\ \omega_{X_j^1,u}=1\ \text{or}\ \omega_{u,v}=1\}.
\end{align*}
This is the event that there is a long edge at $X_j^1$ and one of its endpoint is attached to another edge of length greater than $2^{\delta k}$.\par
Set
\begin{align*}
F_j(\varepsilon,\gamma,k):=\{&\exists i\in[j+2^{\gamma k+1},2^k]\ \text{and}\ \exists v\in\mathbb{Z}^d\ 
\text{s.t.}\ \omega_{X_j^1,v}=1,\\
&\hspace{40mm} |v-X_j^1|\geq 2^{(1/\alpha-\varepsilon)k}\ \text{and}\ X_i^1\in\{X_j^1\}\cup\{v\}
\}.
\end{align*}
This is the event that there is a long edge at $X_j^1$ and the walk visits either $X_j^1$ or $v$ after
time $j+2^{\gamma k+1}$.\par
We finally define 
\begin{align*}
G_j&(\varepsilon,\gamma, \delta,k):=\{\exists v\in\mathbb{Z}^d\ \text{s.t.}\ \omega_{X_j^1,v}=1,\ |v-X_j^1|>2^{(1/\alpha-\varepsilon)k}\}\\
\bigcap&\big\{v\notin\{X^1_{j+1},X^1_{j+2},...,X^1_{j+2^{\gamma k+1}}\}\ \text{for}\ \forall v\in\mathbb{Z}^d\ \text{with}\ \omega_{X_j^1,v}=1, |v-X_j^1|>2^{(1/\alpha-\varepsilon)k}\big\}\\
\bigcap&\left\{\max_{0\leq l\leq 2^{\gamma k}}|X^1_{l+j}-X_j^1|>2^{\delta k}\right\}.
\end{align*}
This event guarantees that the walk does not exit a ball centered at $X_j^1$ with radius $2^{\delta k}$ without crossing a long edge.   
We want to exclude the occurrence of these events for any $j\in[2^k]$. To do so, we define 
\begin{align*}
\mathscr{D}(\varepsilon,k)&:=\bigcup_{j=0}^{2^k}D_j(\varepsilon,k),\ \ \ \ \ \ \ \ \ \ \ 
\mathscr{E}(\varepsilon,\delta,k):=\bigcup_{j=0}^{2^k}E_j(\varepsilon,\delta,k)\\
\mathscr{F}(\varepsilon,\gamma,k)&:=\bigcup_{j=0}^{2^k}F_j(\varepsilon,\gamma,k),\ \ \ \ 
\mathscr{G}(\varepsilon,\gamma,\delta,k):=\bigcup_{j=0}^{2^k}G_j(\varepsilon,\gamma,\delta,k).
\end{align*}
We finally set
\begin{align*}
\mathscr{H}(\varepsilon,\gamma,\delta,k):=\mathscr{D}(\varepsilon,k)\cup
\mathscr{E}(\varepsilon,\delta,k)\cup
\mathscr{F}(\varepsilon,\gamma,k)\cup
\mathscr{G}(\varepsilon,\gamma,\delta,k).
\end{align*}

\begin{lemma}\label{lem:cs4.2} \cite[Proposition 4.2]{crawford2013simple}
Let $d\geq1$ and $s\in(d,(d+2)\wedge 2d)$. For any $0<\delta<1$, there exist  constants $C>0$ and $\gamma,\eta_1>0$ sufficiently small such that
\begin{align*}
\mathbb{P}_{\mu}(\mathscr{H}(\varepsilon,\gamma,\delta,k))\leq C2^{-\eta_1 k}.
\end{align*}

\end{lemma}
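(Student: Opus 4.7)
The plan is to union-bound $\mathscr{H}$ by its four subfamilies $\mathscr{D}, \mathscr{E}, \mathscr{F}, \mathscr{G}$, bound each by $O(2^{-\eta k})$ for some (possibly family-dependent) $\eta > 0$, and take $\eta_1$ to be their minimum. The baseline input for all four is that for fixed $j$ the event $\{\exists v : \omega_{X_j^1, v} = 1,\ |v - X_j^1| > 2^{(1/\alpha - \varepsilon)k}\}$ has $\mu$-probability $O(2^{-(1 - \alpha\varepsilon)k})$ by direct summation of $P(y) \sim C|y|^{-s}$; the $O(2^{\alpha\varepsilon k})$ slack after summing over $j \in [2^k]$ must be absorbed in each case by a further small-probability gain intrinsic to the bad event in question.

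For $\mathscr{E}$ the extra gain comes from independence of distinct edges under $\mu$: requiring a second edge of length $\geq 2^{\delta k}$ at either endpoint of the first long edge (at a vertex $u$ separated by $\geq 2^{\delta k}$ from both) contributes a factor $O(2^{-\alpha\delta k})$, so one wins once $\varepsilon$ is chosen small compared to $\delta$. For $\mathscr{F}$, the gain is the quenched heat-kernel estimate of Proposition \ref{prop:hke}: outside a rare environmental event the return probability at time $i - j \in [2^{\gamma k + 1}, 2^k]$ to a fixed vertex is $O((i-j)^{-d/\alpha}(\log n)^{\theta})$, and since $s < \min(d + 2, 2d)$ forces $d/\alpha > 1$, summing over $i$ gives a factor $O(2^{-(d/\alpha - 1)\gamma k})$ per $j$. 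For $\mathscr{D}$, the idea is a surgery argument: resample the long edge $(X_j^1, v)$, making it independent of the rest of $\omega$, and use quantitative transience (Corollary \ref{cor:no-return} applied to the excursion between visits to $X_j^1$ and $v$) to show that the walk reaching $v$ from $X_j^1$ without ever traversing the resampled edge is polynomially rare.

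The remaining piece $\mathscr{G}$ is a dual problem: bound the probability that the walk escapes a ball of radius $2^{\delta k}$ around $X_j^1$ within $2^{\gamma k}$ steps without ever using a jump of length $\geq 2^{(1/\alpha - \varepsilon)k}$. This follows from a truncated-step second-moment argument in the spirit of Proposition \ref{prop:shortsteps}: the per-step variance of a truncated increment is $O\bigl((2^{(1/\alpha - \varepsilon)k})^{2 - \alpha}\bigr)$, so after $2^{\gamma k}$ steps the typical displacement is much less than $2^{\delta k}$ provided $\gamma$ is small relative to $\delta$, and Doob's maximal inequality applied to the associated martingale converts this into an exponential tail bound. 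The main obstacle is coordinating the three parameters $\varepsilon, \gamma, \delta$ so that a single $\eta_1 > 0$ works uniformly across all four pieces: one fixes $\delta$, then takes $\gamma$ small enough that the heat-kernel sum in $\mathscr{F}$ converges with margin while still leaving $\mathscr{G}$ tractable, and finally $\varepsilon$ small enough that the baseline long-edge probability dominates and the $\mathscr{E}$-bound closes. This is exactly the bookkeeping performed in \cite{crawford2013simple} for $s \in (d, d+1)$, and since Proposition \ref{prop:hke} is already established on the full range $s \in (d, d + 2)$ with $s < 2d$, the mechanism carries over without structural change.
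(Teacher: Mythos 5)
The paper does not actually prove this lemma: it is quoted verbatim as Proposition~4.2 of Crawford--Sly and imported without reproof, the sole remark being that the hypothesis $s<(d+2)\wedge 2d$ is the one under which the cited proof goes through. So the comparison is really against a blind sketch.

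Your overall architecture (union-bound over $\mathscr{D},\mathscr{E},\mathscr{F},\mathscr{G}$, baseline factor $O(2^{-(1-\alpha\varepsilon)k})$ for the long edge at $X_j^1$, a multiplicative gain particular to each family, take $\eta_1$ to be the minimum) is sound, and your treatments of $\mathscr{E}$ and $\mathscr{F}$ look right. However, there is a genuine gap in your treatment of $\mathscr{G}$, and it is not cosmetic: you truncate the increments at the long-edge threshold $2^{(1/\alpha-\varepsilon)k}$, and then use the resulting per-step variance $O\bigl(2^{(1/\alpha-\varepsilon)(2-\alpha)k}\bigr)$ to argue that after $2^{\gamma k}$ steps the displacement is $\ll 2^{\delta k}$. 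That computation gives an RMS displacement of order $2^{\frac{1}{2}(\gamma+(1/\alpha-\varepsilon)(2-\alpha))k}$, and for $\alpha$ near $1$ (i.e.\ $s$ near $d+1$, squarely inside the present paper's range) the exponent $(1/\alpha-\varepsilon)(2-\alpha)$ is close to $1$, so your bound only shows the displacement is $\ll 2^{\delta k}$ when $\delta>\tfrac{1}{2}$. The lemma is supposed to hold for \emph{all} $\delta\in(0,1)$, so the argument fails exactly in the regime where the ball is small — and it fails even more severely across Crawford--Sly's range $\alpha\in(0,1)$ where the lemma is also asserted. The correct scale at which to truncate the increments when controlling the exit from $B(X_j^1,2^{\delta k})$ is the radius $2^{\delta k}$ itself: then the per-step variance is $O(2^{\delta(2-\alpha)k})$, the $2^{\gamma k}$-step RMS displacement is $2^{\frac{1}{2}(\gamma+\delta(2-\alpha))k}\ll 2^{\delta k}$ as soon as $\gamma<\alpha\delta$, and one separately bounds the probability that the walk ever touches an edge of length in $[2^{\delta k},2^{(1/\alpha-\varepsilon)k}]$ within $2^{\gamma k}$ steps, which costs an additional $O(2^{(\gamma-\alpha\delta)k})$. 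Both factors close for $\gamma$ small relative to $\delta$, with room to beat the $2^{\alpha\varepsilon k}$ union-bound deficit.

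Two smaller cautions. First, for $\mathscr{D}$ you invoke Corollary~\ref{cor:no-return}, but that corollary only controls returns to a small ball around a past position after time $2^{(1-\varepsilon')k}$; it does not, as stated, bound the Green's function at a prescribed distant vertex $v$ at distance $>2^{(1/\alpha-\varepsilon)k}$, which is what $D_j$ requires after your surgery. Second, in the $\mathscr{G}$ argument you need the \emph{centered} increments to be a martingale and must therefore subtract a local drift $L_\omega(\cdot)$; the reversibility trick used in Proposition~\ref{prop:shortsteps} to dispose of the drift starts from a stationary vertex, whereas here the starting vertex $X_j^1$ is conditioned to carry a long edge, so the cancellation is not immediate and needs a separate (if routine) argument.
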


\section{Coupling}\label{sec:coup}
This section is devoted to a review of the coupling argument in \cite[Section 5]{crawford2013simple}, which will be a fundamental ingredient of our proof for the quenched stable limit theorem. 
%Since their coupling argument is rather long and involved, we will omit some details and refer readers to \cite{crawford2013simple}.
%However for parts of arguments that will play an important role in the subsequent sections, we give rather careful explanations here. \par
The coupling involves two phases: the {\it main phase} and the {\it special phase}. We are in the special phase when the walk under consideration recently found a new long edge, and we are in the main phase otherwise. Note that we are typically in the main phase, but the contribution to the displacement of the walk predominantly comes from the special phase, which is a feature of heavy-tailed phenomena.

\subsection{Independent variables}
We will introduce several sequences of {\it i.i.d.} random variables that will appear in the coupling argument. %\textcolor{red}{Introduce $w^l_i,\ R^l_i, \mathfrak{r}^l_i, \mathfrak{d}^l_i$.}
\par
For $i\geq1, l\in[k^3]$ and $x\in\mathbb{Z}^d$, let $w^l_i$ be independent Bernoulli random variables such that
$$\mathbb{P}_{\mu}(w_i^l(x)=1)=P(x).$$
These variables will be used in the coupling procedure to describe new long edges found by simple random walks.\par
We next define a family of independent geometric distributions. For $p\in[0,1]$, we denote by ${\tt Geom}(p)$ a geometric random variable with parameter $p$. Namely,
$$\mathbb{P}_{\mu}({\tt Geom}(p)=k)=(1-p)^kp.$$ 
Let ${\tt Uni}(0),{\tt Uni}(1),...$ be a sequence of  {\it i.i.d.} uniform random variables on $[0,1]$.
We then define a family of geometric random variables by
$$R(t)=\min\{i\geq0:{\tt Uni}(i)<t\},\ \ t\in[0,1].$$ 
Then it is obvious that $R(t)$ is decreasing in $t$ and distributed as the geometric distribution with parameter $t$. For $i\geq1, l\in[k^3]$, let $R_i^l(t)$ and $\tilde{R}_i^l(t)$ be independent copies of $R(t)$. These families will be used in the coupling to deal with a technical problem to do with the number that the walk crosses a long edge before escape. 
See Claim \ref{claim:cross} for details.\par
We will need another independent random variables which we use for describing local return probabilities and local degrees around endpoints of newly discovered long edge. For $v\in\mathbb{Z}^d$ and $k\in\mathbb{N}$, we define
$$\tilde{d}^{\omega}(v):=\#\{u\in\mathbb{Z}^d:\ |u-v|\leq 2^{\delta k}\ {\rm and}\ u\ {\rm is\ an}\ \omega\mathchar`-{\rm neighbor\ of}\ v\},$$
which we call the local degree.
Next, we let $\tilde{p}_v=\tilde{p}_v(k)$ be the quenched probability that a walk that started at $v$ and is conditioned to stay in a local neighborhood $\{u\in\mathbb{Z}^d:|u-v|\leq 2^{\delta k}\}$ returns to $v$ within time $2^{\gamma k}$. Also, we set $\tilde{p}_v$ to 1 if there is no open edges between $u$ and vertices $v$ with $|u-v|\leq 2^{\delta k}$.
%Namely, $\{u\in\mathbb{Z}^d:\|u-v\|_{\infty}\leq 2^{\delta k}\}=\{v\}$. 
We call $\tilde{p}_v$ the local return probability. 
For $v\in\mathbb{Z}^d$ and $k\in\mathbb{N}$, we define $\mathfrak{r}^l_i$ and $\mathfrak{d}^l_i$ as independent copies of $\tilde{p}_0$ and $\tilde{d}^{\omega}(0)$ under $\mathbb{P}_{\mu}$, respectively

\subsection{Summary of the coupling}
We will summarize the procedure to couple $k^3$ independent walks $\{\big(X_i^\ell\big)_{0\leq i\leq 2^k}\}_{1\leq l\leq k^3}$ and the environment $\omega$ with {\it i.i.d.} random variables introduced in the previous subsection. 
There are the following basic rules for the coupling: suppose that 
for all $1\leq m\leq l-1$, we have already run the $m$-th walk up to time $2^k$ 
and the $l$-th walk has made $i$ steps so far. Then
\begin{itemize}
\item if there is some $i'$ with $i'\in[i-2^{\gamma k+1}-1,i-1)$ such that at time $i'$ the $l$-th walk found a new long edge, we are in the special phase.  
\item If there is no such an $i'\in[i-2^{\gamma k+1}-1,i-1)$, we are in the main phase.
\end{itemize}
We set $X_0^l:=0$ for any $l\in[k^3]$. Therefore we are mainly interested in behaviors of the walks on the event $\{0\ \text{belong to the infinite cluster}\}$.
However we do not need to consider the conditioning by this event. As a matter of fact, Lemma \ref{lem:aaa} asserts that when $0$ is not in the infinite cluster, the size of connected component of $0$ is so small that the displacement of the walks will vanish after the scaling $2^{-k/\alpha}$.
It therefore does not affect the scaling limit.\par 
We begin with an explanation of the main phase since it is less involved.
%\textcolor{red}{Maybe it's better if we put a flow chart illustrating the coupling argument.}
\par\medskip
{\bf Main phase:} The construction of the main phase is divided into two cases, roughly speaking, depending on whether $X_i^l$ is already visited or not.\par\medskip
\underline{{\it Case 1 (when $X_i^l$ is already visited):}} we first precisely define the meaning of the assumption that $X_i^l$ is already visited. 
For $j\in[2^k], 1\leq l\leq [k^3]$, define 
\begin{align*}
\mathcal{W}_{j,l}:=\{x\in\mathbb{Z}^d:\ \exists n\in[j]\ \text{s.t.}\ X^l_n=x,\ \ \text{or}\ \ \exists m\leq l-1, \exists n'\in[2^k]\ \text{s.t.}\ X^m_{n'}=x\},
\end{align*}
which is the set of vertices we have found so far. Then we set
\begin{align*}
\mathcal{W}^+_{j,l}&:=\mathcal{W}_{j,l}
\cup\{y\in\mathbb{Z}^d:\ \exists z\in\mathcal{W}^+_{j,l}\ \text{with}\ |y-z|> 2^{(1/\alpha-\varepsilon)k}\ \text{s.t.}\ \omega_{y,z}=1\},
\end{align*}
which is the union of $\mathcal{W}_{j,l}$ and endpoints of long edges attached to a vertex in $\mathcal{W}_{j,l}$.
 The actual assumption for {\it Case 1} is that $X_i^l\in\mathcal{W}^+_{i-1,l}$.\par
  It is shown in \cite[Claim 5.1, page 460]{crawford2013simple} that
  there exists a constant $\eta_2>0$ such that
   the probability that there exist $i\in[2^k], l\in[k^3]$ and $y\in\mathbb{Z}^d$ with $|y-X_i^l|>2^{(1/\alpha-\varepsilon)k}$ such that $\omega_{y,X^l_i}=1$ is less than $2^{-\eta_2 k}$.
   This follows from Lemma \ref{lem:cs4.1} and Lemma \ref{lem:cs4.2}; more specifically, we use the upper bounds on the probabilities of $F^*(k)$, $\mathscr{D}(\varepsilon,k)$ and $\mathscr{F}(\varepsilon,\gamma,k)$.\par
 It implies that with high probability the walks do not find a long edge which is already discovered once again $2^{\gamma k+1}$ time after its first visit. If this unlikely event occurs, we call it a {\it type one error}.\par\medskip

\underline{{\it Case 2 (when $X_i^l$ is a new vertex):}} in this case we suppose that $X_i^l\notin\mathcal{W}^+_{i-1,l}$, which implies that we are at a new vertex. We first reveal all edges attached to $X_i^l$ and all vertices in the ball centered at $X_i^l$ of radius $2^{\delta k}$. In particular, 
we express long edges by independent random variables introduced in the previous subsection,
 namely we couple the environment and independent random variables so that $\omega_{x,X_i^l}=w^l_i(x-X_i^l)$ for each $x\in\mathbb{Z}^d$ with $|x-X_i^l|>2^{(1/\alpha-\varepsilon)k}$.
The next procedure is divided into several cases depending on the number of long edges attached to $X_i^l$.\par\medskip
(1) Suppose that $\sum_{x:|x-X_i^l|>2^{(1/\alpha-\varepsilon)k}}w^l_i(x-X_i^l)=0$, namely there is no long edge attached to $X_i^l$. Then we simply choose the next step $X_{i+1}^l$ uniformly among the neighborhood of $X_i^l$ and we are still in the main phase.
\par\medskip
(2) Suppose that $\sum_{x:|x-X_i^l|>2^{(1/\alpha-\varepsilon)k}}w^l_i(x-X_i^l)\geq2$ or $\sum_{x:|x-X_i^l|>2^{(1/\alpha-\varepsilon)k}}w^l_i(x-X_i^l)=1$ and there exist $x,y\in\mathbb{Z}^d$ such that $w^l_i(x-X_i^l)=1$, $|x-y|\leq2^{\delta k}$ and $y\in\mathcal{W}^+_{i-1,l}$.
This means that either there are more than two long edges attached to $X_i^l$, or there is exactly one long edge $(x,X_i^l)$ attached to $X_i^l$ and the other endpoint $x$ is within distance $2^{\delta k}$ from a vertex in $\mathcal{W}^+_{i-1,l}$. It is shown in \cite[Claim 5.2]{crawford2013simple} that 
the probability that this event occurs for some $i\in[2^k], l\in[k^3]$ is less than $2^{-\eta_2 k}$,
 and when it occurs we call it a {\it type two error}. This is a consequence of the bound on the probability of $\mathscr{E}(\varepsilon,\delta,k)$ and an elementary computation of the probability of the former case.\par
We then choose the next step $X_{i+1}^l$ uniformly among the neighborhood of $X_i^l$ and we are still in the main phase.\par\medskip
(3) The only scenario which is left to us is that there is exactly one $x\in\mathbb{Z}^d$ with $|x-X_i^l|>2^{(1/\alpha-\varepsilon)k}$ such that $\omega_{x,X_i^l}=1$, and random variables $\omega_{y,z}$ ($y,z\in B(x,2^{\delta k})$) are all unrevealed and therefore are independent of the procedure up to now. Now we reveal the edges $\omega_{y,z}$ ($y,z\in B(x,2^{\delta k})$) then we can couple the local return probability and the local degree with independent random variables as follows:
$$(\tilde{p}(x),\tilde{d}^{\omega}(x))=(\mathfrak{r}^l_i,\mathfrak{d}^l_i).$$
We also reveal all edges adjacent to $x$, namely $\omega_{x,y}$ for $y\in\mathbb{Z}^d$.
It is proved in \cite[Claim 5.3]{crawford2013simple} that the probability that there exist $i\in[2^k], l\in[k^3]$ such that we have either $d^{\omega}(X_i^l)\neq \tilde{d}^{\omega}(X_i^l)+1$ or $d^{\omega}(x)\neq \tilde{d}^{\omega}(x)+1$
is less than $2^{-\eta_2 k}$. 
If this event occurs, we call it a {\it type three error}.
This means that with high probability every edge other than $(x,X_i^l)$ which is attached to either $X_i^l$ or $x$ has length smaller than $2^{\delta k}$ whenever $|x-X_i^l|>2^{(1/\alpha-\varepsilon)k}$.
This is ensured by the bound on the probability of $\mathscr{E}(\varepsilon,\delta,k)$.
In this case we enter the special phase we describe below.\par\medskip

{\bf Special Phase:} We now explain the construction of the special phase which is significantly more involved than the main phase. 
Let us first give a couple of remarks about the construction of the special phase before going into details. In order to understand the scaling limit of the walk, when a new long edge is found what matters is which endpoint of it the walk eventually ends up with. 
Since a RW on the infinite cluster is transient, the walk only crosses a given edge finitely often. Therefore, what we have to know is the parity of the number of crossings of a new long edge. 
We will review below the procedure given in \cite{crawford2013simple} which determines this parity. 
A very important feature of the procedure is that it only involves information about the local structure of the configuration $\omega$ around two endpoints of a long edge, which allows us to retain independence  after the special phase. \par
Suppose that the $l$-th walk at time $i$ entered the special phase, which means, by definition, that $X_i^l$ is attached to a unique long edge $(x,X_i^l)$ with $|x-X_i^l|>2^{(1/\alpha-\varepsilon)k}$ which has not beed discovered before. 
Recall that we have $(\tilde{p}_{x},\tilde{d}^{\omega}(x))=(\mathfrak{r}^l_i,\mathfrak{d}^l_i)$ by the construction of the coupling. 
Define the localized graph $V^*$ consisting of vertices in $B(X_i^l,2^{\delta k})\cup B(x,2^{\delta k})$ and edges 
$$\{(x,X_i^l)\}\cup\{(y,z)\ \text{s.t.}\ y,z\in B(X_i^l,2^{\delta k}), \omega_{y,z}=1\}\cup\{(y',z')\ \text{s.t.}\ y',z'\in B(x,2^{\delta k}),\ \omega_{y',z'}=1\}.$$
Let $(Y_t)$ be a random walk on $V^*$ independent of the procedure up to this point. This walk will be coupled with the original walk $X^l$ later. From the time at which we enter the special phase, we have to wait for a while since the walk may cross a long edge back and forth. This waiting time needs to be sufficiently long so that the walk does not visit either of endpoints of a long edge after it with high probability.  
For this purpose, we define
\begin{align*}
\tau^*:=\inf\{t>2^{\gamma k}:\ Y_t\notin\{x\}\cup\{X_i^l\}\ \text{for}\ t-2^{\gamma k}\leq\forall t'\leq t\}.
\end{align*}
Namely, $\tau^*$ is the smallest time that we have $Y_t\notin\{X_i^l,x\}$ in the last $2^{\gamma k}$ steps, 
and what we want to know is whether $Y_{\tau^*}\in B(X_i^l,2^{\delta k})$ or $Y_{\tau^*}\in B(x,2^{\delta k})$.\par
The next step is to define the procedure determining the parity of the number of crossings of $(x,X_i^l)$. An important thing is, as mentioned before, that this procedure only involves the structure of $V^*$. 
To do so, we consider the following three types of excursions of $(Y_t)$ started at $X_i^l$.
\begin{enumerate} 
\item[(1)] Move from $X_i^l$ to $x$. This occurs with probability $\frac{1}{1+\tilde{d}^{\omega}(X_i^l)}$.
(Recall the definition of type three errors.)
\item[(2)] Move from $X_i^l$ to another vertex in $B(X_i^l,2^{\delta k})$, then behave as a random walk in $B(X_i^l,2^{\delta k})$ and return to $X_i^l$ within the next $2^{\gamma k}$ steps. This occurs
with probability $\frac{\tilde{p}(X_i^l)\tilde{d}^{\omega}(X_i^l)}{1+\tilde{d}^{\omega}(X_i^l)}$. 
\item[(3)] Move from $X_i^l$ to another vertex in $B(X_i^l,2^{\delta k})$, then behave as a random walk in $B(X_i^l,2^{\delta k})$ and not return to $X_i^l$ within the next $2^{\gamma k}$ steps. This occurs
with probability $\frac{(1-\tilde{p}(X_i^l))\tilde{d}^{\omega}(X_i^l)}{1+\tilde{d}^{\omega}(X_i^l)}$. 
\end{enumerate}
Define $R(X_i^l)$ as the number of excursions of type (1) performed by $(Y_t)$ started at $X_i^l$ before it makes an excursion of type (3). This variable measures the number of crossings of the long edge from $X_i^l$ to $x$ before the original walk escapes from $X_i^l$. We then consider the same construction for the walk started at $x$ and define $R(x)$ analogously. An important implication of the construction explained above is the following claim. 
\begin{claim} \label{claim:cross}\cite[Lemma 5.4]{crawford2013simple}\ \ The variables $R(X_i^l)$ and $R(x)$ are independent. Moreover they are distributed as
\begin{align*}
{\tt Geom}\left(
\frac{(1-\tilde{p}(X_i^l))\tilde{d}^{\omega}(X_i^l)}{1+(1-\tilde{p}(X_i^l))\tilde{d}^{\omega}(X_i^l)}
\right)\ \ \ \text{and}\ \ \ {\tt Geom}\left(\frac{(1-\tilde{p}(x))\tilde{d}^{\omega}(x)}{1+(1-\tilde{p}(x))\tilde{d}^{\omega}(x)}\right)
\end{align*}
respectively, and we have that
\begin{align*}
 Y_{\tau^*}\in B(X_i^l,2^{\delta k})\ \text{when}\ R(X_i^l)\leq R(x)\ \text{and} \ Y_{\tau^*}\in B(x,2^{\delta k})\ \text{when}\ R(X_i^l)> R(x).
\end{align*}
Note that the fact that the walk started at $X_i^l$ is the cause of the asymmetry between $R(X_i^l)\leq R(x)$ and $R(X_i^l)> R(x)$.
\end{claim}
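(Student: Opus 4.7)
The plan is to view $(Y_t)$ up to time $\tau^*$ as an alternating sequence of excursions, anchored either at $X_i^l$ or at $x$, and to exploit the fact that the two families of excursions are driven by disjoint pieces of the local environment together with independent portions of the walker's randomness. Throughout I tacitly use Lemmas \ref{lem:cs4.1}--\ref{lem:cs4.2} to rule out the pathological configurations, so that the classification of excursions into the three types really is exhaustive and the walk that leaves $X_i^l$ without traversing the long edge behaves as advertised within $B(X_i^l, 2^{\delta k})$.

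To identify the laws of $R(X_i^l)$ and $R(x)$: by the strong Markov property, each visit of $(Y_t)$ to $X_i^l$ initiates an i.i.d.\ excursion whose type is $(1),(2),(3)$ with probabilities
\[
P_1=\tfrac{1}{1+\tilde{d}^{\omega}(X_i^l)},\quad P_2=\tfrac{\tilde{p}(X_i^l)\tilde{d}^{\omega}(X_i^l)}{1+\tilde{d}^{\omega}(X_i^l)},\quad P_3=\tfrac{(1-\tilde{p}(X_i^l))\tilde{d}^{\omega}(X_i^l)}{1+\tilde{d}^{\omega}(X_i^l)}.
\]
Since type-(2) excursions return to $X_i^l$ they do not contribute to $R(X_i^l)$, so the relevant quantity is the number of type-(1)'s preceding the first type-(3) in the i.i.d.\ Bernoulli subsequence of non-type-(2) excursions, whose success probability is $P_3/(P_1+P_3)=(1-\tilde{p}(X_i^l))\tilde{d}^{\omega}(X_i^l)/[1+(1-\tilde{p}(X_i^l))\tilde{d}^{\omega}(X_i^l)]$. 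This is precisely the claimed geometric law, and the argument at $x$ is identical.

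Independence of $R(X_i^l)$ and $R(x)$ comes from combining two separate independences. Environmentally, the pair $(\tilde{p}(X_i^l),\tilde{d}^{\omega}(X_i^l))$ depends only on the edges of $\omega$ inside $B(X_i^l, 2^{\delta k})$, and the analogous quantities at $x$ only on $B(x, 2^{\delta k})$; since $|x-X_i^l|>2^{(1/\alpha-\varepsilon)k}$ and $\delta$ is chosen much smaller than $1/\alpha-\varepsilon$, these two balls are disjoint, so independence of the two local environments follows from the product form of $\mu$. Dynamically, by the strong Markov property the random choices made by $(Y_t)$ during excursions rooted at $X_i^l$ are independent of those made during excursions rooted at $x$. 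Together these two inputs yield the required independence.

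Finally, $\tau^*$ is by definition the first time $(Y_t)$ spends $2^{\gamma k}$ consecutive steps off $\{X_i^l, x\}$. Because $V^*$ joins the two balls only through the edge $(X_i^l,x)$, a type-(3) excursion keeps the walk inside $B(X_i^l, 2^{\delta k})$ for the next $2^{\gamma k}$ steps and a type-(3)$^*$ excursion keeps it inside $B(x, 2^{\delta k})$; these are exactly the events that fire $\tau^*$. Tracing the alternation of anchor visits --- which starts at $X_i^l$, toggles to $x$ after every type-(1), and back after every type-(1)$^*$ --- the $X_i^l$-side fires first precisely when $R(X_i^l)\le R(x)$ (the non-strict inequality reflecting that the walk begins at $X_i^l$), and the $x$-side fires first when $R(X_i^l)>R(x)$. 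The main obstacle in this program is the independence step: one must carefully disentangle the single trajectory $(Y_t)$ into two i.i.d.\ excursion streams while simultaneously verifying that the good events of Lemmas \ref{lem:cs4.1}--\ref{lem:cs4.2} actually hold with overwhelming probability, so that the three-type description of excursions is valid.
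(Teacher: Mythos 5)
Your proof is correct and follows essentially the same excursion-decomposition route that Crawford--Sly take for their Lemma~5.4 (which this claim simply cites): classify excursions from each anchor point into the three types, thin out the type-(2) returns to get a Bernoulli sequence with success parameter $P_3/(P_1+P_3)$, derive the geometric law, use the disjointness of $B(X_i^l,2^{\delta k})$ and $B(x,2^{\delta k})$ plus the product structure of $\mu$ and the strong Markov property to decouple the two excursion streams, and then read off which side fires its type-(3) first from the alternation (with the non-strict inequality $R(X_i^l)\le R(x)$ reflecting that the walk starts at $X_i^l$). The one place your write-up is slightly informal---disentangling the single trajectory $(Y_t)$ into two independent i.i.d.\ excursion streams when only a prefix of each is actually realized---is most cleanly handled in the opposite direction: construct the two independent excursion-type sequences first and then build $(Y_t)$ by interleaving them, so that independence of $R(X_i^l)$ and $R(x)$ is automatic rather than something to be verified a posteriori; you gesture at this yourself in your final sentence, and it is the same device the original proof uses.
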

 We want to couple the random walk $Y$ to the original walk $X^l$ in such a way that it holds that
 \begin{align*}
 R(X_i^l)=R_i^l\left(\frac{(1-\tilde{p}(X_i^l))\tilde{d}^{\omega}(X_i^l)}{1+(1-\tilde{p}(X_i^l))\tilde{d}^{\omega}(X_i^l)}\right)\ \ \text{and}
 \ \ R(x)=\left( \frac{(1-\tilde{p}(x))\tilde{d}^{\omega}(x)}{1+(1-\tilde{p}(x))\tilde{d}^{\omega}(x)} \right).
 \end{align*}
To do so, we describe how to run the walk $X^l_t$ step by step for $t\in[i,i+2^{\gamma k+1}-1]$ and exclude unwanted events.
\begin{itemize}
%\item If $X_{t}^l$ is a vertex not visited before, then reveal all unrevealed edges in $\{\omega_{}\}\cup\{\omega\}$.
\item We want to exclude the event that the walk finds a new long edge during the special phase.
It is claimed in \cite[Claim 5.6]{crawford2013simple} that the probability that for some $l\in[k^3]$, the walk $X^l$ finds a new long edge during some special phase is less than $2^{-\eta_2 k}$, and if this occurs, we call it a {\it type four error}.
This bound on the probability can be shown by using the union bound and the following elementary observation: the number of long edges the walk find up to time $N$ is stochastically dominated by the sum of $N$ {\it i.i.d.} Bernoulli random variables with success probability $2^{-(1-\alpha\varepsilon)k}$.
\item For $t\in[i,i+2^{\gamma k+1}-1]$, choose $X^l_{t+1}$ uniformly among the neighborhood of $X^l_{t+1}$.If $X_{t}^l=Y_t$ and the edge $(X_t^l, X^l_{t+1})$ is in $V^*$, then couple $X^l$ and $Y$ so that $X^l_{t+1}=Y_{t+1}$. Namely, $X^l$ and $Y$ coincide until $X^l$ exits the graph $V^*$.   
\item The next unwanted event we wish to exclude is that the walk exits $V^*$ too quickly.
Set
\begin{align*}
\tau:=\begin{cases}
&2^{\gamma k+1}\hspace{56mm}\ \text{if}\ X_{i+t}^l= Y_t\ \ \text{for}\ \ \forall t\in[1,2^{\gamma k+1}],\\
&\min\{1\leq t\leq2^{\gamma k+1}:\ X_{i+t}^l\neq Y_t\}\hspace{15mm}\text{otherwise}.
\end{cases}
\end{align*}
By definition, we have $\tau= 2^{\gamma k+1}$ if $X^l_t\in V^*$ for all $t\in[i,i+2^{\gamma k+1}-1]$,
and the walk's transitions between $B(X_i^l,2^{\delta k})$ $B(x,2^{\delta k})$ are only through $(X_i^l,x)$.
It is shown in \cite[Claim 5.6]{crawford2013simple} that the probability that 
there is some special phase for which $\tau\neq 2^{\gamma k+1}$ is less than $2^{-\eta_2 k}$.
This follows from the bounds on the probabilities of $\mathscr{E}(\varepsilon,\delta,k)$ and $\mathscr{G}(\varepsilon,\gamma,\delta,k)$.
If this occurs, we call it a {\it type five error}.
\item Let
$$\mathcal{K}:=\{\tau=2^{\gamma k+1}\}\cap\{\tau^*<2^{\gamma k+1}\ \ \text{and}\ \ Y_t\notin\{x\}\cup\{X_i^l\}
\ \text{for}\ \forall t\in[\tau^*,2^{\gamma k+1}]\}.$$
This event basically ensures that the walk stays in $V^*$ up to time $i+2^{\gamma k+1}-1$, but the walk escapes from $\{X_i^l\}\cup\{x\}$ by time $i+2^{\gamma k+1}-1$.
It is shown in \cite[Claim 5.6]{crawford2013simple} that the probability that 
there is some special phase for which $\mathcal{K}$ does not hold is less than $2^{-\eta_2 k}$.
This is a consequence of the bounds on the probabilities of $\mathscr{F}(\varepsilon,\gamma,k)$ and $\mathscr{G}(\varepsilon,\gamma,\delta,k)$.
If this occurs, we call it a {\it type six error}.
\end{itemize}
By the construction presented above, we have the following:
\begin{claim}\cite[Lemma 5.5]{crawford2013simple}
On the event $\mathcal{K}$, it holds that
\begin{align*}
R(X_i^l)\leq R(x)\Rightarrow\ X^l_{i+2^{\gamma k+1}-1}\in B(X_i^l,2^{\delta k})\ \ \ \text{and}\ \ \ 
R(X_i^l)> R(x)\Rightarrow\ X^l_{i+2^{\gamma k+1}-1}\in B(x,2^{\delta k}).
\end{align*}
\end{claim}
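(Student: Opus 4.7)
The plan is to observe that, once the event $\mathcal{K}$ is unpacked, the claim reduces to a simple geometric observation about the localized graph $V^*$ combined with Claim \ref{claim:cross}. No new probabilistic estimates are needed; everything has been front-loaded into the construction and into Claim \ref{claim:cross}.

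First I would exploit the exact coupling of the two walks guaranteed by $\mathcal{K}$. By definition, $\tau = 2^{\gamma k+1}$ means that $X^l_{i+t} = Y_t$ for every $t\in[0, 2^{\gamma k+1}]$. In particular, $X^l_{i+2^{\gamma k+1}-1} = Y_{2^{\gamma k+1}-1}$, so it suffices to locate the auxiliary walk $Y$ at time $2^{\gamma k+1}-1$ in the correct ball.

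Next I would record the bottleneck structure of $V^*$. Its vertex set is $B(X_i^l, 2^{\delta k}) \cup B(x, 2^{\delta k})$, and since $|x - X_i^l| > 2^{(1/\alpha - \varepsilon)k}$ with $\delta$ chosen much smaller than $1/\alpha - \varepsilon$, these two balls are disjoint for all sufficiently large $k$. Moreover, by the definition of $V^*$, the only edge of $V^*$ with one endpoint in each ball is the long edge $(x, X_i^l)$. Therefore any trajectory of $Y$ that moves from one ball to the other must pass through $X_i^l$ or $x$.

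The final step combines Claim \ref{claim:cross} with the remaining parts of $\mathcal{K}$. Claim \ref{claim:cross} tells us that $R(X_i^l) \leq R(x)$ forces $Y_{\tau^*} \in B(X_i^l, 2^{\delta k})$, and the reverse inequality forces $Y_{\tau^*} \in B(x, 2^{\delta k})$. The event $\mathcal{K}$ also prescribes that $Y_t \notin \{X_i^l, x\}$ for every $t\in[\tau^*, 2^{\gamma k+1}]$; combined with the bottleneck observation, this prevents $Y$ from switching sides between $\tau^*$ and $2^{\gamma k+1}$. Hence $Y_{2^{\gamma k+1}-1}$ lies in the same ball as $Y_{\tau^*}$, which together with the coupling from the first step completes the proof. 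The only mild subtlety that needs to be made explicit is the disjointness of the two balls at scale $2^{\delta k}$, which follows directly from the parameter hierarchy $\delta \ll 1/\alpha - \varepsilon$ assumed throughout the coupling setup; this is the closest thing to an obstacle.
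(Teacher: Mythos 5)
Your proposal is correct, and it reconstructs the argument that the paper defers to \cite[Lemma 5.5]{crawford2013simple} without spelling out. The paper itself gives no proof here beyond the citation, so there is nothing in the displayed text to compare against, but the chain of deductions you supply is exactly what the construction is engineered to support: (1) on $\mathcal{K}$, $\tau = 2^{\gamma k+1}$ identifies $X^l_{i+t}$ with $Y_t$ for $t$ up to $2^{\gamma k+1}$, so the target vertex is $Y_{2^{\gamma k+1}-1}$; (2) the only edge of $V^*$ joining the two disjoint balls is $(x, X_i^l)$, so any crossing of $Y$ from one ball to the other must visit $\{x, X_i^l\}$; (3) Claim \ref{claim:cross} places $Y_{\tau^*}$ in the correct ball according to the sign of $R(X_i^l) - R(x)$, and the second clause of $\mathcal{K}$ forbids $Y$ from visiting $\{x, X_i^l\}$ on $[\tau^*, 2^{\gamma k+1}]$, so $Y$ cannot switch balls after $\tau^*$. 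Your observation that $\delta$ must be small relative to $1/\alpha - \varepsilon$ to make the balls disjoint is the right (and only) geometric check, and it is indeed built into the parameter hierarchy of the coupling. One could add, for completeness, that $\tau^* < 2^{\gamma k+1}$ on $\mathcal{K}$ guarantees $2^{\gamma k+1}-1 \in [\tau^*, 2^{\gamma k+1}]$, which is what makes step (3) apply to the time of interest, but this is implicit in your argument and does not constitute a gap.
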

Finally, we define the {\it good event}\ $\mathcal{G}$ by
$$\mathcal{G}:=\bigcap_{f=1}^6\{\text{there is no type}\ f\ \text{error in the entire coupling procedure}\}.$$
Then by Lemma 5.7 in \cite{crawford2013simple}, we have that
\begin{align}\label{ineq:good event}
\mathbb{P}_{\mu}(\mathcal{G}^c)\leq 2^{-\eta_2 k}.
\end{align}
This inequality is of fundamental importance in the subsequent sections.\\

\if0
\begin{figure}
% \begin{center}
%\centering
    \begin{tikzpicture}[node distance=1.5cm,
    every node/.style={fill=white, font=\sffamily}, align=center]
  % Specification of nodes (position, etc.)
  \node (start)             [activityStarts]              {\underline{{\bf Main Phase}}\\{\tt (Start)} {\it Is} $X_i^l$ {\it a new vertex?}};
  \node (start-yes)          [activityStarts, below of=start, xshift=2.5cm, yshift=-0.5cm]   {{\it Is} $X_i^l$ {\it attached to a long edge?}};
  \node (start-no)            [activityStarts, below of=start, xshift=-2.5cm, yshift=-0.5cm]   {\it Does a type 1 error occur?};
   \node (error1-no)         [startstop, below of=start-no, xshift=1.5cm, yshift=-0.7cm]   {Choose $X_{i+1}^l$ and\\ go back to {\tt (Start).}};
    \node (error1-yes)         [startstop, left of=error1-no, xshift=-2.5cm]   {Terminate the coupling.};
   \node (start-yes-yes)  [activityStarts, right of=error1-no, xshift=2.8cm, yshift=-0cm]   {{\it Does either a type 2 error}\\ {\it or a type 3 error occur?}};
    \node (error23-yes)         [startstop, below of=start-yes-yes, xshift=0cm, yshift=-0.7cm]   {Terminate the coupling.};
   \node (error23-no)         [startstop, left of=error23-yes, xshift=-2.8cm]   {We enter the \underline{\bf Special Phase}.};
   \node (special)     [activityStarts, below of=error23-no, yshift=-1.1cm]   {{\it Does either a type 4, a type 5 or}\\ {\it a type 6 error occur in the special phase?}};
   \node (special-yes)     [activityStarts, below of=special, xshift=2cm,yshift=-0.5cm]   {Terminate the coupling.};
   \node (special-no)     [activityStarts, below of=special, xshift=-2cm, yshift=-0.5cm]   {We leave the special phase\\ and go back to {\tt(Start)}.};
  % Specification of lines between nodes specified above
  % with aditional nodes for description 
    % \draw[->]              (main)--(start);
     \draw[->]             (start) -- node {Yes} (start-yes);
     \draw[->]             (start) -- node {No} (start-no);
      \draw[->]             (start-no) -- node {Yes} (error1-yes);
     \draw[->]             (start-no) -- node {No} (error1-no);
      \draw[->]             (start-yes) -- node {Yes} (start-yes-yes);
     \draw[->]             (start-yes) -- node {No} (error1-no);
       \draw[->]             (start-yes-yes) -- node {Yes} (error23-yes);
     \draw[->]             (start-yes-yes) -- node {No} (error23-no);
     \draw[->]                 (error23-no)--node {Run the walk for $2^{\gamma k+1}$ time\\ according to the special procedure.} (special);
      \draw[->]             (special) -- node {Yes} (special-yes);
     \draw[->]             (special) -- node {No} (special-no);
    \end{tikzpicture}
%\end{center} 
\caption{Flow chart for the coupling}
\label{fig:flow}
\end{figure}
\fi

\section{Proof of Theorem \ref{thm:main}}
The most of arguments in the proof of \cite{crawford2013simple} can be applied for the problem discussed in the article after minor changes. 
However, there are two issues which need significant modifications.
The one is the treatment of short jumps, which we already discussed in Proposition \ref{prop:shortsteps}.
The other is the treatment of long jumps. Namely, in our setting a walk encounters long edges more often than in the setting of \cite{crawford2013simple} since we changed the definition of them. 
In their definition, long edges are ones of length greater than $\lambda 2^{k/\alpha}$, where $\lambda>0$ is a small positive constant and $2^k$ is the number of steps of the walk under consideration. This implies that for fixed $\lambda>0$, the number of long edges encountered by the walk up to time $2^k$ is tight in $k$.\par
However, in this article we defined long edges to be those of length greater than $2^{(1/\alpha-\varepsilon)k}$ for a small constant $\varepsilon>0$, and proved in Proposition \ref{prop:shortsteps} that 
the contribution of jumps along short edges in our sense is negligible after the scaling $2^{-k/\alpha}$. 
Therefore, in our setting the number of the walk's encounters with long edges is no longer tight. This fact requires a nontrivial improvement of the arguments in Subsection 6.2 of \cite{crawford2013simple}.
To do so, we will need the following estimate.
Recall that
\begin{align}\label{def:phi}
\phi^l_i:=\sum_{j=1}^i {\bf 1}\left\{X^l_j\notin\{X^l_0,X^l_1,...,X^l_{j-1}\}\ \text{and}\ X^l_j\neq X^{l'}_{j'}\ \text{for any}\ l'< l, j'\in[2^k] \right\}.
\end{align}
Let a deterministic constant $\hat{C}\in(0,1)$ be the almost-sure limit of $\frac{\phi^l_n}{n}$. See Subsection 4.1 of \cite{crawford2013simple} for details.
\begin{proposition}\label{prop:phi}{\rm(Sub-linear fluctuations of counting processes)}
There exists an $\eta_3=\eta_3(\varepsilon,\delta,\gamma,d,s)>0$ such that 
$$2^{-(1-\eta_3)k}\max_{l\in[k^3]}\max_{1\leq i\leq 2^k}|\phi^l_i-i\hat{C}|$$
 is a tight sequence with respect to $\annealedP_{\mu}$.
\end{proposition}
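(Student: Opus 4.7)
The plan is a Doob decomposition of the counting process. Let $\xi^l_j$ denote the indicator appearing in~\eqref{def:phi}, so $\phi^l_i=\sum_{j=1}^i\xi^l_j$, and let $\mathcal{F}_{j-1}$ be the $\sigma$-algebra generated by $\omega$, the full trajectories of $X^1,\dots,X^{l-1}$, and the initial segment $X^l_0,\dots,X^l_{j-1}$. With $p^l_j:=\annealedE_{\mu}[\xi^l_j\mid\mathcal{F}_{j-1}]$ one has
\[
\phi^l_i - i\hat{C} \;=\; M^l_i + \bigl(A^l_i - i\hat{C}\bigr),\qquad M^l_i:=\sum_{j=1}^i(\xi^l_j-p^l_j),\ \ A^l_i:=\sum_{j=1}^i p^l_j,
\]
with $M^l_i$ a martingale whose increments are bounded by $1$ in absolute value. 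Azuma--Hoeffding combined with Doob's maximal inequality yields $\annealedP_{\mu}(\max_{i\le 2^k}|M^l_i|\ge 2^{(1-\eta_3)k})\le 2\exp(-c\cdot 2^{(1-2\eta_3)k})$, which even after a union bound over $l\in[k^3]$ is negligible for any $\eta_3<\tfrac12$. So the martingale component is $o(2^{(1-\eta_3)k})$ with overwhelming probability, and the task reduces to controlling the predictable compensator.

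To bound $|A^l_i - i\hat{C}|$, set $x:=X^l_{j-1}$ and let $V^l_{j-1}$ be the union of (a) the full trajectories of walks $l'<l$ and (b) the trajectory of walk $l$ up to time $j-1$; then
\[
p^l_j \;=\; 1-\frac{\#\{y\in V^l_{j-1}:\omega_{x,y}=1\}}{\deg_\omega(x)}.
\]
The expected contribution from part~(a) is controlled by a crude volume count: this set has cardinality at most $k^3\cdot 2^k$, whereas the range of each walk spans diameter $\asymp 2^{k/\alpha}$, so the expected number of $\omega$-edges from $x$ into~(a) is $O(k^3\cdot 2^{k(1-d/\alpha)})$ per step, well within the budget since $d/\alpha>1$ (as $d\ge 2$ and $\alpha<2$). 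For~(b), Corollary~\ref{cor:no-return} combined with Proposition~\ref{prop:hke} shows that, outside an event of probability $\le 2^{-\eta k}$, walk $l$ never revisits a site that is both older than $2^{(1-\varepsilon')k}$ steps and further than $2^{\delta k}$ from $x$; hence the set of $\omega$-neighbours of $x$ lying in $V^l_{j-1}$ is, up to exponentially rare exceptions, determined by the trace of the walk during its last $2^{(1-\varepsilon')k}$ steps inside a ball of radius $2^{\delta k}$ around $x$.

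After these reductions $p^l_j$ becomes, up to an additive error of order $2^{-\eta k}$, a bounded measurable function of the environment viewed from the particle together with a \emph{short} recent-history window, whose $\nu_0$-expectation coincides with $\hat{C}$ by the ergodic theorem recalled in~\cite[Subsection~4.1]{crawford2013simple}. A second-moment estimate for $A^l_i-i\hat{C}$, using the heat kernel bound of Proposition~\ref{prop:hke} to extract a covariance bound of the form $|\mathrm{Cov}(p^l_j,p^l_{j'})|\le C(\log|j-j'|)^{\theta}|j-j'|^{-\kappa}$ for some $\kappa>0$, gives $\mathrm{Var}_{\annealedP_\mu}(A^l_i)\le Ci^{2-\kappa'}$ for some $\kappa'>0$; Chebyshev plus a dyadic chaining over $i\in[1,2^k]$ and a final union bound over $l\in[k^3]$ then deliver $\max_l\max_{i\le 2^k}|A^l_i-i\hat{C}|\le 2^{(1-\eta_3)k}$ with overwhelming probability, for some $\eta_3>0$. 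I expect the main obstacle to be precisely this last step: upgrading the qualitative ergodicity of the environment-from-the-particle chain used in~\cite{crawford2013simple} to an explicit polynomial mixing rate strong enough to yield a concrete $\eta_3>0$. Since Proposition~\ref{prop:phi} asks only for \emph{some} $\eta_3>0$, the logarithmic slack in Proposition~\ref{prop:hke} can be absorbed into $\eta_3$.
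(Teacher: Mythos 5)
Your Doob decomposition $\phi^l_i - i\hat C = M^l_i + (A^l_i - i\hat C)$ is a natural starting point, and the martingale part is handled correctly: the increments of $M^l_i$ are bounded by $1$, so Azuma--Hoeffding plus the union bound over $l\in[k^3]$ gives sub-diffusive fluctuations for $M^l$, which is more than enough. The gap is entirely in the compensator $A^l_i$. You assert a covariance decay $|\mathrm{Cov}(p^l_j,p^l_{j'})|\le C(\log|j-j'|)^\theta|j-j'|^{-\kappa}$ for the conditional new-vertex probabilities, and you cite Proposition~\ref{prop:hke} for it, but the heat-kernel bound does not yield such an estimate. Proposition~\ref{prop:hke} controls $P^\omega_x(X_n=y)$ pointwise; it tells you the walk is unlikely to be at a fixed site, which shows $X^l_{j-1}$ and $X^l_{j'-1}$ are typically far apart, but it says nothing quantitative about the joint law of the \emph{environments viewed from} those two positions, which is what $\mathrm{Cov}(p^l_j,p^l_{j'})$ measures. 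The environment is frozen; the two random variables are correlated through the configuration $\omega$, not merely through the walk's positions, and the walk can revisit old neighbourhoods. Turning this into a polynomial mixing rate for the environment-seen-from-the-particle chain is a substantial problem that the literature does not supply off the shelf, as you yourself flag in the last paragraph. So the proof does not close.

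The paper avoids this entirely by a regeneration argument rather than a mixing argument. Instead of decorrelating $p^l_j$ and $p^l_{j'}$, it cuts the trajectory at the times $m_1<m_2<\dots<m_{\beta(k)}$ at which the walk crosses a \emph{new} long edge of length $>2^{(1/\alpha-\varepsilon)k}$ and settles on the far side. On the good coupling event $\mathcal{G}$, such a crossing transports the walk to a part of $\omega$ that has not been explored, so the block contributions $\mathcal{N}^l_j$ to $\tilde\phi^l$ between consecutive $m_j$'s (after cleaning away intersections with other walks and nearby old sites, each a sublinear correction) are genuinely independent under $\annealedP_\mu$, not just weakly correlated. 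The second moment $\annealedE_\mu[(\sum_j(\mathcal{N}^l_j-\annealedE_\mu\mathcal{N}^l_j))^2]$ then reduces to $\sum_j\annealedE[(m_{j+1}-m_j)^2]$, which is where the quantitative transience estimates (Lemmas~\ref{lem:tilde-m} and~\ref{lem:new edge}, themselves fed by Proposition~\ref{prop:shortsteps} and the heat-kernel bound) do the real work: they show the gaps $m_{j+1}-m_j$ have a uniformly sub-maximal second moment, and a martingale maximal inequality finishes. In short: you need the regeneration structure of the coupling to replace the mixing-rate estimate you cannot prove; the heat-kernel bound enters, but only to control the regeneration gap lengths, not to decorrelate the compensator directly.

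Two smaller points. Your part-(a) estimate $O(k^3\cdot 2^{k(1-d/\alpha)})$ for the expected number of $\omega$-edges from $x$ into the other walks' trajectories is not tight (the correct order is roughly $k^3\cdot 2^{-kd/\alpha}$), but since it is still summable the conclusion you want survives. And the identification of the $\nu_0$-expectation of the reduced $p^l_j$ with $\hat C$ deserves a sentence, since $\hat C$ is defined as the a.s.\ limit of $\phi^l_n/n$; this is the same ergodic fact \cite[Subsection~4.1]{crawford2013simple} uses, so it is fine, but it should be made explicit in a complete write-up.
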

We will prove Proposition \ref{prop:phi} in Section 7.

\subsection{The first approximation $(\hat{X}^l_i)$}
Let us discuss approximations, which are basically same as ones discussed in Section 6 of \cite{crawford2013simple}. 
However, there is one notable difference: in what follows, we will not use types of vertices introduced in \cite{crawford2013simple}.
Let $(X_i^l)$ be the $l$-th walk. 
We define the first approximation $(\hat{X}_i^l)$ below. For $i\in[2^k]$ and $l\in[k^3]$, define
\begin{align*}
\sigma_i^{l}:={\bf 1}\left\{R_i^{l}\left(\dfrac{(1-\tilde{p}^l_{X_{i-1}})\tilde{d}^{\omega}(X_{i-1})}{1+(1-\tilde{p}^l_{X_{i-1}})\tilde{d}^{\omega}(X_{i-1})}\right)
>
\tilde{R}_i^{l}\left(\dfrac{(1-\mathfrak{r}_i^{l})\mathfrak{d}_i^{l}}{1+(1-\mathfrak{r}_i^{l})\mathfrak{d}_i^{l}}\right)
\right\}.
\end{align*}
This indicator describes the parity of the number of crossings of a long edge. Namely, the indicator is $1$ when it is odd and is $0$ when it is even. 
Define
\begin{align*}
Z_{i}^{l}:=\sigma_i^{l}\sum_{|x|>2^{(1/\alpha-\varepsilon)k}}xw_i^{l}(x)\ \ \ \text{and}
\ \ \ \tilde{Z}_{i}^{l}:=\sigma_i^{l}\sum_{2^{(1/\alpha-\varepsilon)k}<|x|\leq 2^{(1/\alpha-\varepsilon_1)k}}xw_i^{l}(x),
\end{align*}
where $\varepsilon_1\in(0,\varepsilon)$ is another small positive constant we will choose later. See \eqref{def:eps1}.\par
We now define the first approximation $(\hat{X}^l_i)$ as follows:
\begin{align*}
\hat{X}^l_i&:=\sum_{i'=1}^{\phi_i^{l}}Z_{i'}^{l}=\sum_{i'=1}^{\phi_i^{l}} \sigma_i^{l}\sum_{|x|>2^{(1/\alpha-\varepsilon)k}}xw_i^{l}(x),
\end{align*}
where $\phi^l_i$, already defined in \eqref{def:phi}, is the number of undiscovered vertices the $l$-th walk encountered up to time $i$. 
In the next subsection, will use the following processes too:
\begin{align}\label{def:tilde-x}
\tilde{X}^l_i&:=\sum_{i'=1}^{\phi_i^{l}}\tilde{Z}_{i'}^{l}=\sum_{i'=1}^{\phi_i^{l}} \sigma_i^{l}\sum_{|x|>2^{(1/\alpha-\varepsilon_1)k}}xw_i^{l}(x),\nonumber\\
\tilde{\tilde{X}}^l_i&:=\sum_{i'=1}^{\phi_i^{l}} \sigma_i^{l}\sum_{2^{(1/\alpha-\varepsilon)k}<|x|\leq2^{(1/\alpha-\varepsilon_1)k}}xw_i^{l}(x),
\end{align}
so that $\hat{X}^l_i=\tilde{X}^l_i+\tilde{\tilde{X}}^l_i.$
We now introduce the rescaled processes as follows: for $t\in[0,1]$
\begin{align*}
X^{l}(t):=2^{-k/\alpha}X^l_{\lfloor t2^k\rfloor},\ \hat{X}^{l}(t):=2^{-k/\alpha}\hat{X}^l_{\lfloor t2^k\rfloor},\ \tilde{X}^{l}(t):=2^{-k/\alpha}\tilde{X}^l_{\lfloor t2^k\rfloor},\ \tilde{\tilde{X}}^{l}(t):=2^{-k/\alpha}\tilde{\tilde{X}}^l_{\lfloor t2^k\rfloor}.
\end{align*}
%In \cite{crawford2013simple}, it is shown that $\hat{X}^{(k),l}$ approximates $X^{(k),l}$ in the following sense.
We will show that $\hat{X}^{l}$ approximates to $X^{l}$ in the following sense, which is a result corresponding to \cite[Corollary 6.3]{crawford2013simple}.
\begin{proposition}\label{prop:cs-6.3}
For any $1\leq q<\infty$ and $\xi>0$, there exists a constant $C>0$ such that for any $k\geq1$
\begin{align*}
\annealedP_{\mu}\left(\#\left\{l\in[k^3]:\|\hat{X}^{l}(t)-X^{l}(t)\|_{L^q[0,1]}\geq\xi\right\}>\xi k^3\right)\leq =O(k^{-3})
\end{align*}
\end{proposition}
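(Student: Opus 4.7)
The plan is to work on the good event $\mathcal{G}$ from \eqref{ineq:good event} (whose complement contributes only $O(2^{-\eta_2 k})$ in probability, which is absorbed at the end) and to exploit the coupling to decompose $X^l_i - \hat{X}^l_i$ into a short-step part controlled by Proposition~\ref{prop:shortsteps} and a small ``special-phase residual''. I will first set up this decomposition, then count special phases, and finally conclude by a two-layer Markov inequality.

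First I split each step of the $l$-th walk into \emph{short} ($|X^l_j - X^l_{j-1}| \leq 2^{(1/\alpha-\varepsilon)k}$) and \emph{long}, writing $X^l_i = S^l_i + L^l_i$ for the corresponding cumulative sums. On $\mathcal{G}$ every long step lies inside a unique special phase associated to a single newly discovered long edge, and by Claim~\ref{claim:cross} the \emph{net} long-jump displacement of a completed special phase equals $\sigma^l_{i'}\cdot y$, where $y$ is the direction of the long edge at the $i'$-th new vertex; vertices without a long edge contribute $Z^l_{i'}=0$. Summing over completed phases yields
\[
X^l_i - \hat{X}^l_i \;=\; S^l_i \;+\; R^l_i,
\]
where the residual $R^l_i$ consists of (a) local displacements of size $\leq C\,2^{\delta k}$ accumulated within each already-completed special phase, and (b) an ``in-progress'' contribution of size $O(|y|)$ if $i$ happens to fall inside an ongoing phase.

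Next I count special phases. The indicators $w^l_i(\cdot)$ deciding whether a new vertex carries a long edge are independent Bernoullis with total success probability $p := \sum_{|x|>2^{(1/\alpha-\varepsilon)k}}P(x) \asymp 2^{-(1-\alpha\varepsilon)k}$, so the total number $N^l$ of special phases of the $l$-th walk is stochastically dominated by $\mathrm{Bin}(2^k,p)$. A Chernoff bound gives $\annealedP_\mu(N^l \geq 2 \cdot 2^{\alpha\varepsilon k}) \leq \exp(-c\, 2^{\alpha\varepsilon k})$. A routine heavy-tail estimate further shows that the longest long edge encountered has length at most $2^{(1/\alpha+\zeta)k}$ (any $\zeta>0$) with probability $1-o(k^{-3})$. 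Combining these bounds with Proposition~\ref{prop:shortsteps},
\[
\|X^l(\cdot)-\hat X^l(\cdot)\|_{L^q[0,1]} \;\leq\; C\Bigl(W_k + N^l\cdot 2^{\delta k - k/\alpha} + 2^{\zeta k}\bigl(N^l\cdot 2^{(\gamma-1)k}\bigr)^{1/q}\Bigr),
\]
where $W_k$ is the quantity from Proposition~\ref{prop:shortsteps}. Choosing $\varepsilon,\delta,\gamma,\zeta>0$ small enough that $\delta + \alpha\varepsilon < 1/\alpha$ and $q\zeta + \alpha\varepsilon + \gamma - 1 < 0$, each of the three summands decays exponentially in $k$ at some rate $\beta=\beta(\varepsilon,\delta,\gamma,q)>0$ with overwhelming probability.

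Putting everything together, $\annealedP_\mu\bigl(\|X^l-\hat X^l\|_{L^q[0,1]}\geq \xi\bigr) = O(\xi^{-1}2^{-\beta k})$ for a single $l$, and a final Markov inequality applied to $N_k := \#\{l\in[k^3]: \|X^l-\hat X^l\|_{L^q[0,1]}\geq \xi\}$ (using $\annealedE_\mu[N_k] \leq C\xi^{-1}k^3 2^{-\beta k}$) gives $\annealedP_\mu(N_k > \xi k^3) = O(\xi^{-2}2^{-\beta k}) = O(k^{-3})$, as required. The main obstacle is exactly the one the authors flag at the start of this section: the enlarged class of ``long edges'' makes $N^l$ grow like $2^{\alpha\varepsilon k}$ rather than being tight, and one must verify that the per-phase local displacement $2^{\delta k}$, once rescaled by $2^{-k/\alpha}$, still dominates this growth. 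This works only because $\varepsilon$ and $\delta$ are free small parameters, and the inequality $\delta + \alpha\varepsilon < 1/\alpha$ is the quantitative incarnation of that fact.
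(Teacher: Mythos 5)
Your proposal is correct and follows the same strategy the paper (and Crawford--Sly's Section~6.1, which the paper references) uses: control the cumulative short steps via Proposition~\ref{prop:shortsteps}, bound the number of special phases by a binomial tail (this is the adjustment forced by the new definition of ``long edge''), and bound the largest long edge by an $O(2^{(1/\alpha+\zeta)k})$ heavy-tail estimate, which plays the role of the paper's $\hat Z^l_{\rm max}$ replacing the divergent $Z^l_{\rm max}$ of Crawford--Sly. One small remark: your residual term (a), the accumulated local wanderings of size $\leq C2^{\delta k}$ per completed special phase, is already contained in the short-step sum $S^l_i$ that $W_k$ controls (those are all steps of length $\leq 2^{(1/\alpha-\varepsilon)k}$), so it is redundant in $R^l_i$; since it decays under the same parameter choice, this double-counting is harmless.
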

\begin{proof}
The proof is basically same as the arguments in Subsection 6.1 of \cite{crawford2013simple}. We only mention several thing that need to be modified. In \cite{crawford2013simple}, they used the formula (16) in page 468 to control the sum of short jumps for the case $\alpha\in(0,1)$. In our setting, we can easily deduce the same estimate by using Proposition \ref{prop:shortsteps}.
\par
Secondly, in \cite{crawford2013simple}, they introduced the quantity
$$Z_{\rm max}^l:=\sum_{i=1}^{2^k}\sum_{(j,m)\in\{(0,0)\}\cup[J]^2}\sum_{|x|>\lambda 2^{k/\alpha}}|x|w^{l,j,m}_{i}(x),$$
 and used it to bound $\|\hat{X}^{l}(t)-X^{l}(t)\|_{L^q[0,1]}$
 in the proofs of Lemma 6.2 and Corollary 6.3. 
However, this quantity diverges in our setting. A modification required is to not use types of vertices and to replace $Z^l_{\rm max}$ with
\begin{align}\label{def:zmax}
\hat{Z}^l_{\rm max}:=\max_{1\leq i\leq 2^k}\sum_{|x|>2^{(1/\alpha-\varepsilon)k}}|x|w^{l}_{i}(x).
\end{align}
We can bound $\hat{Z}^l_{\rm max}$ using the same argument as in Lemma 6.4 in \cite{crawford2013simple}. This concludes the proof.
%Noticing that $\hat{Z}^l_{\rm max}\leq Z^l_{\rm max}$, by Lemma 6.4 in \cite{crawford2013simple} we conclude the proof.
\end{proof}

\subsection{The second approximation $(\mathfrak{X}^l_i)$}
In this subsection, we will discuss the second approximation to replace $(\hat{X}^l_i)$ with a new process $(\hat{\mathfrak{X}}^l_i)$, which behaves in a simpler manner. This step consists in replacing $\phi^{l}_i$ with a deterministic function $i\hat{C}$, where 
$\hat{C}$ is the almost-sure limit of $\phi^l_n/n.$
Namely, we set
\begin{align*}
\hat{\mathfrak{X}}^l_i:=\sum_{i'=1}^{\lfloor i\hat{C}\rfloor}Z^{l}_{i'}=\sum_{i'=1}^{\lfloor i\hat{C}\rfloor}\sigma_{i'}^{l}\sum_{|x|>2^{(1/\alpha-\varepsilon)k}}xw_{i'}^{l}(x),
\end{align*}
and $\hat{\mathfrak{X}}^l(t):=2^{-k/\alpha}\hat{\mathfrak{X}}^l_{\lfloor t2^k\rfloor}$ for $t\in[0,1]$.\par
In \cite{crawford2013simple}, the authors used an ergodic argument to complete this step, but we have to employ more quantitative arguments since we have to deal with a larger number of encounters with long edges. We wish to show that $(\hat{\mathfrak{X}}^l_i)$ actually approximates to $(\hat{X}^{l}_i)$. 
To do so, %by Lemma \ref{lem:intermediate}
 it suffices to show the following claim.

\begin{proposition}\label{prop:approx}
For any $1\leq q<\infty$, $\xi>0$ and for sufficiently large $k\geq1$
\begin{align*}
\annealedP_{\mu}\left(\#\left\{l\in[k^3]:\|\hat{\mathfrak{X}}^{l}(t)-\hat{X}^{l}(t)\|_{L^q[0,1]}\geq\xi\right\}>\xi k^3\right)= O(k^{-3}).
\end{align*}

\end{proposition}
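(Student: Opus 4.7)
The plan is to use Proposition~\ref{prop:phi} to replace the random counting index $\phi^l_i$ in $\hat X^l_i=\sum_{i'=1}^{\phi^l_i}Z^l_{i'}$ by its deterministic linearization $\lfloor i\hat C\rfloor$, at the price of a partial sum of i.i.d.\ symmetric jumps over a short range. Writing $T^l_j:=\sum_{j'=1}^{j}Z^l_{j'}$, we have $\hat X^l_i-\hat{\mathfrak X}^l_i=T^l_{\phi^l_i}-T^l_{\lfloor i\hat C\rfloor}$, and on the good event $A:=\{\max_{l\in[k^3],\,i\le 2^k}|\phi^l_i-\lfloor i\hat C\rfloor|\le M\}$ with $M:=2^{(1-\eta_3/2)k}$ the random index $\phi^l_i$ is sandwiched in the deterministic window $[\lfloor i\hat C\rfloor-M,\lfloor i\hat C\rfloor+M]$. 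A quantitative form of Proposition~\ref{prop:phi} (upgrading tightness to a uniform $L^p$-bound of the renormalized maximum for some $p$ large, which its proof in Section~7 should yield) combined with Markov at the inflated scale $M$ gives $\annealedP_\mu(A^c)=o(k^{-3})$.

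I would first treat $q\in[1,\alpha)$ directly. On $A$ the pointwise bound
\[
|T^l_{\phi^l_i}-T^l_{\lfloor i\hat C\rfloor}|^{q}\le\max_{|a-\lfloor i\hat C\rfloor|\le M}|T^l_a-T^l_{\lfloor i\hat C\rfloor}|^{q}
\]
replaces the random index by a maximum over a \emph{deterministic} window, which is crucial as it decouples the estimate from $\phi^l_i$ entirely. Doob's $L^q$ maximal inequality (L\'evy's for $q=1$, by symmetry of $Z^l_{j'}$) together with the von Bahr--Esseen inequality and the single-jump bound $\annealedE_\mu[|Z^l|^q]\le CR^{q-\alpha}/(\alpha-q)$ with $R:=2^{(1/\alpha-\varepsilon)k}$ yields
\[
\annealedE_\mu\!\bigl[\max_{|a-b|\le M}|T^l_a-T^l_b|^{q}\bigr]\le C\,M\,\annealedE_\mu[|Z^l|^{q}].
\]
Averaging over $i\in[0,2^k)$ and rescaling by $2^{-k/\alpha}$ gives
\[
\annealedE_\mu\!\bigl[\|\hat{\mathfrak X}^l-\hat X^l\|_{L^{q}[0,1]}^{q};\,A\bigr]\le C\,2^{(-\eta_3/2+\varepsilon(\alpha-q))k},
\]
which is exponentially small in $k$ provided $\varepsilon$ is chosen with $\varepsilon(\alpha-q)<\eta_3/4$. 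For $q\ge\alpha$, I would interpolate via $\|f\|_{L^q[0,1]}\le\|f\|_\infty^{1-q_0/q}\|f\|_{L^{q_0}[0,1]}^{q_0/q}$ for some fixed $q_0\in[1,\alpha)$; the $L^\infty$-norm of $\hat{\mathfrak X}^l-\hat X^l$ has the polynomial tail $\annealedP_\mu(\|\cdot\|_\infty>K)\le CK^{-\alpha}$ by L\'evy's inequality applied to $T^l$, so choosing $K=k^A$ for $A$ large enough combines with the $L^{q_0}$-exponential decay above to give $\annealedP_\mu(\|\hat{\mathfrak X}^1-\hat X^1\|_{L^q[0,1]}\ge\xi)=o(k^{-3})$.

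A final Markov bound using the exchangeability of the walks then delivers
\[
\annealedP_\mu\!\bigl(\#\{l:\|\hat{\mathfrak X}^l-\hat X^l\|_{L^q}\ge\xi\}>\xi k^3\bigr)\le\frac{\annealedP_\mu(\|\hat{\mathfrak X}^1-\hat X^1\|_{L^q}\ge\xi)}{\xi}=O(k^{-3}),
\]
as desired. The main technical obstacle is the coupling-induced dependence between $\phi^l_i$ and the jumps $\{Z^l_{j'}\}$, both built on the common probability space of Section~\ref{sec:coup}; the sandwich on $A$ circumvents this by reducing the estimate to a maximal inequality for $T^l$ alone over a deterministic window. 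A secondary obstacle is that Proposition~\ref{prop:phi} states only tightness, so a moment version extracted from its proof is needed to convert $\annealedP_\mu(A^c)\to 0$ into the quantitative $o(k^{-3})$ rate; this is absorbed by working at the slightly larger scale $M=2^{(1-\eta_3/2)k}$ in place of $2^{(1-\eta_3)k}$.
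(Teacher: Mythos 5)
Your approach — sandwiching $\phi^l_i$ in a deterministic window via Proposition~\ref{prop:phi} and then applying a maximal inequality to the i.i.d.\ sum $T^l_j=\sum_{j'\le j}Z^l_{j'}$ — is a genuinely different route from the paper's, and it has an appealing directness, but it contains a parametric gap that the paper's own proof was specifically engineered to avoid.

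The gap is in the step where you compute, for $q\in[1,\alpha)$,
\[
\annealedE_\mu\bigl[\|\hat{\mathfrak X}^l-\hat X^l\|_{L^{q}[0,1]}^{q};\,A\bigr]\le C\,2^{-qk/\alpha}\,M\,\annealedE_\mu[|Z^l|^{q}]\approx C\,2^{k(-\eta_3/2+(\alpha-q)\varepsilon)},
\]
and conclude exponential decay ``provided $\varepsilon(\alpha-q)<\eta_3/4$.'' But $\varepsilon$ cannot be chosen after $\eta_3$: the exponent $\eta_3=\eta_3(\varepsilon,\delta,\gamma,d,s)$ in Proposition~\ref{prop:phi} is itself a function of $\varepsilon$, and inspection of its derivation (through Lemmas~\ref{lem:tilde-m} and~\ref{lem:new edge}, where the constraints involve $\rho=(1-\alpha/2)\varepsilon$, $\xi<\rho$, and $\kappa$ small relative to $\rho$) shows $\eta_3(\varepsilon)$ decays at least linearly as $\varepsilon\to0$. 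So $\varepsilon(\alpha-q)<\eta_3(\varepsilon)/4$ reduces to $(\alpha-q)<c$ for an $\varepsilon$-free constant $c$ that is typically small, which fails for $q$ near $1$ when $\alpha$ is not close to $1$. Since the proposition must hold for every $q\ge1$ and the paper treats $\alpha\in[1,2)$, the direct single-scale estimate does not close.

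This is exactly why the paper inserts an intermediate cutoff: it picks $\varepsilon_1\in(0,\varepsilon)$ with $\alpha\varepsilon_1<\eta_3(\varepsilon)$ \emph{after} $\varepsilon$ (and hence $\eta_3$) is fixed, splits $\hat{\mathfrak X}^l$ and $\hat X^l$ into pieces with jumps longer than $2^{(1/\alpha-\varepsilon_1)k}$ versus jumps of intermediate length in $(2^{(1/\alpha-\varepsilon)k},2^{(1/\alpha-\varepsilon_1)k}]$, disposes of the intermediate pieces by an $\eta_3$-free second-moment estimate (Lemma~\ref{lem:aux}), and for the truly long pieces counts the number of nonzero jumps in a window of width $O(2^{(1-\eta_3)k})$; the expected count is $O(2^{(\alpha\varepsilon_1-\eta_3)k})\to0$, decoupling the $\phi$-fluctuation scale from $\varepsilon$. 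Your sandwich-plus-maximal-inequality strategy could plausibly be repaired by inserting the same intermediate scale before invoking Doob/von Bahr--Esseen, but as written the argument does not go through. Your remaining steps (the Chernoff reduction to $l=1$, the interpolation for $q\ge\alpha$, and the need to extract a moment bound from the proof of Proposition~\ref{prop:phi} rather than bare tightness) are sound in spirit and consistent with the paper.
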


Before proving Proposition \ref{prop:approx}, we show an auxiliary lemma. What we do is to introduce
an intermediate scale for the length of edges and decompose $(\hat{\mathfrak{X}}^l_i)$ further by using it.
Namely, we choose a small number $\varepsilon_1>0$ satisfying
\begin{align}\label{def:eps1}
\varepsilon_1<\varepsilon\ \ \text{and}\ \ \alpha\varepsilon_1<\eta_3,
\end{align}
where $\alpha:=s-d$ and $\eta_3=\eta_3(\varepsilon)$ is the constant appearing in Proposition \ref{prop:phi}.
The meaning of the condition \eqref{def:eps1} will become clear in the proof of Proposition \ref{prop:approx}.
We then decompose $(\hat{\mathfrak{X}}^l_i)$ as follows:
\begin{align*}
\hat{\mathfrak{X}}^l_i=\hat{\mathfrak{M}}^l_i+\hat{\mathfrak{N}}^l_i,
\end{align*}
where
\begin{align}\label{def:decomp}
\hat{\mathfrak{M}}^l_i&:=\sum_{i'=1}^{\lfloor i\hat{C}\rfloor}\sigma_{i'}^{l}\sum_{|x|>2^{(1/\alpha-\varepsilon_1)k}}xw_{i'}^{l}(x),\\
\hat{\mathfrak{N}}^l_i&:=\sum_{i'=1}^{\lfloor i\hat{C}\rfloor}\sigma_{i'}^{l}\sum_{2^{(1/\alpha-\varepsilon)k}<|x|\leq2^{(1/\alpha-\varepsilon_1)k}}xw_{i'}^{l}(x).
\end{align}
Set $\hat{\mathfrak{M}}^l(t):=2^{-k/\alpha}\hat{\mathfrak{M}}^l_{\lfloor t2^k\rfloor}, \hat{\mathfrak{N}}^l(t):=2^{-k/\alpha}\hat{\mathfrak{N}}^l_{\lfloor t2^k\rfloor}$ for $t\in[0,1]$. We first show that $\hat{\mathfrak{N}}^l(t)$, which is the rescaled sum of jumps of intermediate length, is negligible in the end.

\begin{lemma}\label{lem:aux}
For any $1\leq q<\infty$, $\xi>0$ and for sufficiently large $k\geq1$, we have that
\begin{align*}
\annealedP_{\mu}\left(\#\left\{l\in[k^3]:\|\tilde{\tilde{X}}^{l}(t)\|_{L^q[0,1]}\geq\xi\right\}>\xi k^3\right)=O(k^{-3})
\end{align*}
and
\begin{align*}
\annealedP_{\mu}\left(\#\left\{l\in[k^3]:\|\hat{\mathfrak{N}}^{l}(t)\|_{L^q[0,1]}\geq\xi\right\}>\xi k^3\right)= O(k^{-3}).
\end{align*}
See \eqref{def:tilde-x} and the formula below for the definition of $\tilde{\tilde{X}}$.
\end{lemma}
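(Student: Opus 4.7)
The plan is to realize both $\tilde{\tilde{X}}^l$ and $\hat{\mathfrak{N}}^l$ as time-changes of a single martingale with small quadratic variation, and then apply Doob's maximal inequality together with a Chebyshev/Markov estimate. Concretely, for each $l \in [k^3]$ I would define
\begin{align*}
M^l_t := \sum_{i'=1}^{t} \sigma^l_{i'} \sum_{2^{(1/\alpha-\varepsilon)k} < |x| \leq 2^{(1/\alpha-\varepsilon_1)k}} x\, w^l_{i'}(x), \qquad t = 0, 1, \ldots, 2^k,
\end{align*}
adapted to the natural filtration generated by the coupling procedure up to step $t$. The key observation is that, by the construction reviewed in Section~4, the sign variable $\sigma^l_{i'} \in \{0,1\}$ is a measurable function of the past together with the auxiliary variables $R^l_{i'}, \tilde{R}^l_{i'}, \mathfrak{r}^l_{i'}, \mathfrak{d}^l_{i'}$, all of which are independent of the long-edge indicators $\{w^l_{i'}(x) : |x| > 2^{(1/\alpha-\varepsilon)k}\}$. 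Combined with the symmetry $P(x)=P(-x)$, this will imply that the innermost sum is conditionally centred, so that $(M^l_t)$ is an honest martingale.

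The main computation is a second-moment estimate: each increment has conditional variance bounded by
\begin{align*}
\sum_{2^{(1/\alpha-\varepsilon)k} < |x| \leq 2^{(1/\alpha-\varepsilon_1)k}} |x|^2 P(x) \leq C \int_{2^{(1/\alpha-\varepsilon)k}}^{2^{(1/\alpha-\varepsilon_1)k}} r^{1-\alpha}\,dr \leq C\, 2^{(2/\alpha - 1 - (2-\alpha)\varepsilon_1)k},
\end{align*}
where the final step uses $\alpha\in[1,2)$ to bound the integral by $C B^{2-\alpha}$ with $B=2^{(1/\alpha-\varepsilon_1)k}$. Summing the at most $2^k$ orthogonal increments and applying Doob's $L^2$ maximal inequality yields
\begin{align*}
\annealedE_\mu\!\left[ \sup_{t \leq 2^k} |M^l_t|^2 \right] \leq 4\, \annealedE_\mu\!\left[ |M^l_{2^k}|^2 \right] \leq C\, 2^{(2/\alpha - (2-\alpha)\varepsilon_1)k}.
\end{align*}
Since $\tilde{\tilde{X}}^l_i = M^l_{\phi^l_i}$ and $\hat{\mathfrak{N}}^l_i = M^l_{\lfloor i \hat{C} \rfloor}$ with both time-indices in $\{0,1,\ldots,2^k\}$ (as $\phi^l_i \leq i \leq 2^k$ and $\hat{C}\leq 1$), rescaling by $2^{-k/\alpha}$ gives the exponentially small bound
\begin{align*}
\annealedE_\mu\!\left[ \sup_{t \in [0,1]} |\tilde{\tilde{X}}^l(t)|^2 \right] \vee \annealedE_\mu\!\left[ \sup_{t \in [0,1]} |\hat{\mathfrak{N}}^l(t)|^2 \right] \leq C\, 2^{-(2-\alpha)\varepsilon_1 k}.
\end{align*}

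To close, since $\|f\|_{L^q[0,1]} \leq \|f\|_{L^\infty[0,1]}$ for every $q \in [1,\infty)$, Chebyshev's inequality gives $\annealedP_\mu(\|\tilde{\tilde{X}}^l\|_{L^q[0,1]} \geq \xi) \leq C \xi^{-2}\, 2^{-(2-\alpha)\varepsilon_1 k}$, and likewise for $\hat{\mathfrak{N}}^l$. A further Markov step over the $k^3$ walks then produces
\begin{align*}
\annealedP_\mu\!\left( \#\{l \in [k^3] : \|\tilde{\tilde{X}}^l\|_{L^q} \geq \xi\} > \xi k^3 \right) \leq C \xi^{-3}\, 2^{-(2-\alpha)\varepsilon_1 k},
\end{align*}
which is $o(k^{-N})$ for every $N$, hence a fortiori $O(k^{-3})$, with the analogous bound for $\hat{\mathfrak{N}}^l$. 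The main obstacle will be the careful bookkeeping inside the coupling of Section~4 needed to verify that $\sigma^l_{i'}$ is conditionally independent of $\{w^l_{i'}(x) : |x| > 2^{(1/\alpha-\varepsilon)k}\}$; once this martingale property is in hand, the remainder is a routine $L^2$-Doob-Chebyshev computation, and the exponentially small bound in $k$ makes the polynomial tail $O(k^{-3})$ essentially for free. Note that the condition $\alpha\varepsilon_1<\eta_3$ from~\eqref{def:eps1} plays no role here; it is reserved for the companion argument controlling $\hat{\mathfrak{M}}^l$ via Proposition~\ref{prop:phi}.
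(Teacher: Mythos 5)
Your proposal is correct and follows essentially the same route as the paper: bound the second moment of a single intermediate-scale increment by $C\,2^{(1/\alpha-\varepsilon_1)(2-\alpha)k}$, note that on the good event $\mathcal G$ the summands form a (conditionally centred, independent) sequence so Doob's $L^2$ maximal inequality applies, deduce $\annealedE_\mu[\sup_t|\cdot|^2]\leq C\,2^{-(2-\alpha)\varepsilon_1 k}$ after rescaling, and then close with Chebyshev and Markov over the $k^3$ walks. The only thing the paper is slightly more careful about than you is the explicit restriction to $\mathcal G$ (your flagged ``main obstacle'' is exactly Lemma~\ref{lem:indep}(ii) in the paper, and the leftover $\mathcal G^c$ contribution is negligible by \eqref{ineq:good event}); otherwise the two arguments coincide.
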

\begin{proof}
 For some $C>0$ we have that 
\begin{align*}
\mathbb{E}_{\mu}\left[\left|\sum_{2^{(1/\alpha-\varepsilon)k}<|x|\leq2^{(1/\alpha-\varepsilon_1)k}}xw^1_1(x)\right|^2\right]&\leq C\sum_{2^{(1/\alpha-\varepsilon)k}<l\leq2^{(1/\alpha-\varepsilon_1)k}} l^2\cdot l^{-s}\cdot l^{d-1}\\
&\leq C 2^{(1/\alpha-\varepsilon_1)(2-\alpha)k}.
\end{align*}
Furthermore on the good event $\mathcal{G}$, $\hat{\mathfrak{N}}^l_i$ is the sum of {\it i.i.d.} random variables
with finite second moment.
Therefore, we have that
\begin{align*}
\mathbb{E}_{\mu}\left[(2^{-k/\alpha}\hat{\mathfrak{N}}^l_{2^k})^2\right]&\leq C 2^{-2k/\alpha}\cdot 2^{(1/\alpha-\varepsilon_1)(2-\alpha)k}\cdot 2^k=C2^{-(2-\alpha)\varepsilon_1k}.
\end{align*}
 This estimate together with the martingale maximum inequality
yields the desired estimates.
\end{proof}

We finally prove Proposition \ref{prop:approx}.

\begin{proof}[Proof of Proposition \ref{prop:approx}]
By Lemma \ref{lem:aux}, the claim follows from the following estimate:
\begin{align*}
\annealedP_{\mu}\left(\#\left\{l\in[k^3]:\|\hat{\mathfrak{M}}^{l}(t)-\tilde{X}^{l}(t)\|_{L^q[0,1]}\geq\xi\right\}>\xi k^3\right)= O(k^{-3})
\end{align*}
for any $q\in[1,\infty)$ and $\xi>0$.
We mainly consider the case $l=1$ in what follows but the claim for general $l\in[k^3]$ follows from the same argument.
Write $ \langle a,b\rangle:=[a\wedge b,a\vee b]$, then we have that
\begin{align*}
\left\|\hat{\mathfrak{M}}^{l}(t)-\tilde{X}^{l}(t)\right\|_{L^q[0,1]} %&\leq 
%2^{-k/\alpha}\left\|\sum_{i'=1}^{\phi_{\lfloor t2^k\rfloor}^{1}}Z_{i'}^{1}-\sum_{i'=1}^{\lfloor t2^k\rfloor \hat{C} }Z_{i'}^{1}\right\|_{L^q[0,1]},\\
&\leq 2^{-k/\alpha}\hat{Z}^1_{\rm max}
\cdot\left\|\#\left\{ j\in\langle \phi_{\lfloor t2^k\rfloor}^{1}, \lfloor t2^k\rfloor \hat{C} \rangle :\tilde{Z}_{j}^{1}\neq0\right\}\right\|_{L^q[0,1]}\\
&\leq 2^{-k/\alpha}\hat{Z}^1_{\rm max}\cdot \left\{2^{-k}\sum_{i=1}^{2^k} \left(\#\left\{ j\in\langle \phi_i^{1}, i \hat{C} \rangle :\tilde{Z}_{j}^{1}\neq0\right\}\right)^q \right\}^{1/q}.
\end{align*}
Define 
$$\mathcal{V}^{l}:=2^{-k}\sum_{i=1}^{2^k} \left(\#\left\{ j\in\langle \phi_i^{l}, i \hat{C} \rangle :\tilde{Z}_{j}^{l}\neq0\right\}\right)^q.$$
Then by Lemma 6.4 in \cite{crawford2013simple}, in order to get the conclusion it suffices to prove that
\begin{align}\label{ineq:cher}
\annealedP_{\mu}\left(\#\left\{l\in[k^3]\ :\ \mathcal{V}^l >\xi\right\}>\xi k^3\right)=O(k^{-3}).
\end{align}
 To this end we will prove $\lim_{k\to\infty}\annealedP_{\mu}(\mathcal{V}^1 >\xi)=0$.
 %for $\mu$-almost every $\omega$
 This implies $\lim_{k\to\infty}P^{\omega}(\mathcal{V}^1 >\xi)=0$ for $\mu$-almost every $\omega$, 
 which indeed leads to \eqref{ineq:cher} by the Chernoff bound.\par
Let $(v_m)_{m\geq 1}$ be  i.i.d. Bernoulli random variables with 
$$\annealedP_{\mu}(v_1=1)=1-\annealedP_{\mu}(v_1=0)=\annealedP_{\mu}\left(\sum_{|x|>2^{(1/\alpha-\varepsilon_1)k}}xw^1_1(x)\neq0\right)=O(2^{(\alpha\varepsilon_1-1)k}).$$
Let $\upsilon>0$ be an arbitrary small number. By Proposition \ref{prop:phi}, there exists $K>0$ such that 
$$\annealedP_{\mu}\left(\sup_{1\leq i\leq 2^k}|\phi^1_i-i\hat{C}|>K2^{(1-\eta_3)k}\right)<\upsilon.$$
On the event $\{\sup_{1\leq i\leq 2^k}|\phi^1_i-i\hat{C}|\leq K2^{(1-\eta_3)k}\}$, it hold that
\begin{align*}
\mathcal{V}^1&\leq 2^{-k}\sum_{i=1}^{2^k}\left(\#\left\{ j\in[ i\hat{C}-K2^{(1-\eta_3)k}, i \hat{C}+K2^{(1-\eta_3)k} ] :\tilde{Z}_{j}^{l}\neq0\right\}\right)^q.
\end{align*}
Observe that $\left(\#\left\{ j\in[ i\hat{C}-K2^{(1-\eta_3)k}, i \hat{C}+K2^{(1-\eta_3)k} ] :\tilde{Z}_{j}^{l}\neq0\right\}\right)^q$ is stochastically dominated by 
$$\left(\sum_{m=1}^{2K\cdot 2^{(1-\eta_3)k}}v_m\right)^q.$$
Therefore,
\begin{align*}
&\ \ \ \ \annealedE_{\mu}\left[2^{-k}\sum_{i=1}^{2^k}\left(\#\left\{ j\in[ i\hat{C}-K2^{(1-\eta_3)k}, i \hat{C}+K2^{(1-\eta_3)k} ] :\tilde{Z}_{j}^{l}\neq0\right\}\right)^q\right]\\
&\leq\annealedE_{\mu}\left[\left(\sum_{m=1}^{2K\cdot 2^{(1-\eta_3)k}}v_m\right)^q\right]\\
&= O\left(\annealedE_{\mu}\left[\sum_{m=1}^{2K\cdot 2^{(1-\eta_3)k}}v_m\right]^q\right)\\
&\leq O\left(\left(2^{(1-\eta_3)k}2^{(\alpha\varepsilon_1-1)k}\right)^q\right)\\
&\leq O\left(2^{q(\alpha\varepsilon_1-\eta_3)k}\right).
\end{align*}
By \eqref{def:eps1}, this estimate implies that 
$\limsup_{k\to\infty}\annealedP_{\mu}(\mathcal{V}^1>\xi)<\upsilon$. Since $\upsilon$ is arbitrary, we get the conclusion.
\end{proof}

\subsection{On the structure of $\hat{\mathfrak{X}}^l_i$}
In this subsection, we will investigate the structure of $\hat{\mathfrak{X}}^l_i$. Recall that
\begin{align*}
\hat{\mathfrak{X}}^l_i&:=\sum_{i'=1}^{\lfloor i\hat{C}\rfloor}Z_{i'}^l
=\sum_{i'=1}^{\lfloor i\hat{C}\rfloor}\sigma_{i'}^{l}\sum_{|x|>2^{(1/\alpha-\varepsilon)k}}xw_{i'}^{l}(x)\\
&=\sum_{i'=1}^{\lfloor i\hat{C}\rfloor}
{\bf 1}\left\{R_i^{l}\left(\dfrac{(1-\tilde{p}^l_{X_{i-1}})\tilde{d}^{\omega}(X_{i-1})}{1+(1-\tilde{p}^l_{X_{i-1}})\tilde{d}^{\omega}(X_{i-1})}\right)
>
\tilde{R}_i^{l}\left(\dfrac{(1-\mathfrak{r}_i^{l})\mathfrak{d}_i^{l}}{1+(1-\mathfrak{r}_i^{l})\mathfrak{d}_i^{l}}\right)
\right\}
\sum_{|x|>2^{(1/\alpha-\varepsilon)k}}xw_{i'}^{l}(x).
\end{align*}
We now observe the following.
\begin{lemma}\label{lem:indep}
\begin{enumerate}
\item[(i)] The sequence $(\sigma_j^l)_{1\leq j\leq 2^k}$ is independent of $(\sum_{|x|>2^{(1/\alpha-\varepsilon)k}}xw_{j}^{l}(x))_{1\leq j\leq 2^k}$ under $\annealedP_{\mu}$.
\item[(ii)] On the good event $\mathcal{G}$, under $\annealedP_{\mu}$, $(Z_j^l)_{1\leq j\leq 2^k}$ is an independent sequence.
\end{enumerate}
\end{lemma}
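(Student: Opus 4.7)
The plan is to trace which random variables each quantity depends on and then exploit the product structure of $\mu$ together with the built-in independence of the auxiliary sequences $R^l, \tilde R^l, \mathfrak r^l, \mathfrak d^l$ introduced in Section~\ref{sec:coup}.

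For part~(i), I would first observe that $\sigma_j^l$ is a deterministic function of $R_j^l, \tilde R_j^l, \mathfrak r_j^l, \mathfrak d_j^l$ together with $\tilde p^l_{X_{j-1}}$ and $\tilde d^\omega(X_{j-1})$. By their very definitions, the latter two are measurable with respect to the open status of edges contained in $B(X_{j-1}, 2^{\delta k})$, i.e.\ edges of length at most $2^{\delta k+1}$. On the other hand, $\sum_{|x|>2^{(1/\alpha-\varepsilon)k}} x\, w_j^l(x)$ depends only on the long-range Bernoulli variables $\{w_j^l(x):|x|>2^{(1/\alpha-\varepsilon)k}\}$. Since $\delta$ is chosen so that $\delta < 1/\alpha-\varepsilon$ (for $k$ large the two length scales are disjoint), these two families of edge variables are disjoint subfamilies of $\omega$, hence independent under the product measure $\mu$. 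Combined with the fact that the auxiliaries $R,\tilde R,\mathfrak r,\mathfrak d$ are independent of $\omega$, this yields the claimed independence of $(\sigma_j^l)_j$ from $\bigl(\sum_{|x|>2^{(1/\alpha-\varepsilon)k}} x\, w_j^l(x)\bigr)_j$.

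For part~(ii), what is additionally needed is independence across the index $j$. The families $\{w_j^l(x)\}_{j,x}$, $\{R_j^l\}_j$, $\{\tilde R_j^l\}_j$, $\{\mathfrak r_j^l\}_j$ and $\{\mathfrak d_j^l\}_j$ are each i.i.d.\ by construction, so the only potential source of cross-$j$ dependence enters through $(\tilde p^l_{X_{j-1}}, \tilde d^\omega(X_{j-1}))$, which reads the short-range environment near the current position. On $\mathcal G$ the exclusion of type-two and type-three errors ensures that whenever the walk discovers a new long edge with endpoints $X_{j-1}$ and $x$, the balls $B(X_{j-1},2^{\delta k})$ and $B(x,2^{\delta k})$ consist entirely of previously unexplored edges; the exclusion of the type-four error further guarantees that no additional long edge is discovered during any special phase. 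Consequently, for distinct indices $j$ the variables $\sigma_j^l$ are measurable with respect to disjoint collections of short edges, so that on $\mathcal G$ the tuples $(\sigma_j^l, \sum_x x\, w_j^l(x))$ for different $j$ involve disjoint sets of environment and auxiliary variables. The product structure of $\mu$, combined with part~(i), then gives independence of $(Z_j^l)_j$ on $\mathcal G$.

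The main obstacle I anticipate is formalising the ``disjoint regions'' statement under $\mathcal G$: one must carefully track the order in which the coupling reveals edges, and argue that restricting to $\mathcal G$ (an event that is itself measurable with respect to many of the same variables) does not reintroduce dependence. A standard way around this, used in \cite{crawford2013simple}, is to produce an a~priori independent sequence $(\bar Z_j^l)$ on an enlarged probability space such that $Z_j^l = \bar Z_j^l$ on $\mathcal G$; since $\annealedP_\mu(\mathcal G^c) \le 2^{-\eta_2 k}$ by \eqref{ineq:good event}, the statement is then essentially ``for free'' for the purposes of subsequent weak-convergence arguments.
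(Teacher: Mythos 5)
Your proof is correct and follows essentially the same reasoning as the paper's (extremely terse) proof: part~(i) rests on the fact that $\tilde{p}$, $\tilde{d}^\omega$ and the auxiliary sequences $R,\tilde R,\mathfrak r,\mathfrak d$ only involve short-range structure and a~priori independent randomness, disjoint from the long-range Bernoullis $w_j^l$; part~(ii) uses that on $\mathcal{G}$ distinct encountered long edges are separated by more than $2\cdot 2^{\delta k}$, so the short-range balls feeding into the $\sigma_j^l$ for different $j$ are disjoint. The paper's proof of~(ii) points only at the type-four error, whereas you invoke types two through four; both are in the spirit of the coupling construction, and your closing remark about realising an a~priori i.i.d.\ sequence $(\bar Z_j^l)$ that agrees with $(Z_j^l)$ on $\mathcal{G}$ is the correct way to make the ``independent on $\mathcal{G}$'' statement precise.
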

\begin{proof}
\begin{enumerate}
\item[(i)] This immediately follows from the definitions of local return probabilities $(\tilde{p}_x)_{x\in\mathbb{Z}^d}$ and local degrees $(\tilde{d}^{\omega}(x))_{x\in\mathbb{Z}^d}$.
\item[(ii)] The second claim follows from the following observations:
\begin{itemize}
\item $Z^l_i=0$ unless $(X^l_j)$ encounters a new long edge at time $i$.
\item On $\mathcal{G}$, the distance between two distinct long edges encountered by $(X^l_j)$ is at least 
$2\cdot2^{\delta k}$. 
\end{itemize}
The first claim holds by definition.
The second claim follows from the construction of the coupling in \cite{crawford2013simple} 
See the definition of the type four error. 
\end{enumerate}
\end{proof}

Let $(\tilde{w}_i^l(x))_{i\geq1}$ be i.i.d. Bernoulli random variables whose distribution is identical to
that of 
$(w_i^l(x))_{i\geq1}$. Furthermore, assume that $(\tilde{w}_i^l(x))_{i\geq1}$ is independent of $(w_i^l(x))_{i\geq1})$ and $(\sigma_{i}^l)_{i\geq1}$. Define
\begin{align*}
\tilde{\mathfrak{X}}_i^l&:=\hat{\mathfrak{X}}^l_i+\sum_{i'=1}^{\lfloor i\hat{C}\rfloor}\sigma_{i'}^{l}\sum_{|x|\leq 2^{(1/\alpha-\varepsilon)k}}x\tilde{w}_{i'}^{l}(x)\\
&=\sum_{i'=1}^{\lfloor i\hat{C}\rfloor}\sigma_{i'}^{l}\sum_{x\in\mathbb{Z}^d}x\left(\tilde{w}_{i'}^{l}(x){\bf 1}_{\{|x|\leq 2^{(1/\alpha-\varepsilon)k}\}}+w_{i'}^l(x){\bf 1}_{\{|x|>2^{(1/\alpha-\varepsilon)k}\}}  \right)
\end{align*}
and $\tilde{\mathfrak{X}}^l(t):=2^{-k/\alpha}\tilde{\mathfrak{X}}^l_{\lfloor t2^k \rfloor}.$
By the same argument as in the proof of Lemma 6.7 in \cite{crawford2013simple}, we get the following.

\begin{proposition}\label{prop:tilde} 
For any $1\leq q<\infty$, $\xi>0$ and for sufficiently large $k\geq1$
\begin{align*}
\annealedP_{\mu}\left(\#\left\{l\in[k^3]:\|\hat{\mathfrak{X}}^{l}(t)-\tilde{\mathfrak{X}}^{l}(t)\|_{L^q[0,1]}\geq\xi\right\}>\xi k^3\right)\leq O(k^{-3}).
\end{align*}
\end{proposition}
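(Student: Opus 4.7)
The plan is to recognize the difference $D_i^l := \tilde{\mathfrak{X}}_i^l - \hat{\mathfrak{X}}_i^l$ as a centered martingale built from genuinely independent, symmetric Bernoulli increments, and then to run a standard Doob/Chebyshev plus Markov argument. Writing out the difference, we have
\[
D_i^l \;=\; \sum_{i'=1}^{\lfloor i\hat{C}\rfloor}\sigma_{i'}^{l}\sum_{|x|\leq 2^{(1/\alpha-\varepsilon)k}} x\,\tilde{w}_{i'}^{l}(x).
\]
The structural point is that $(\tilde{w}_{i'}^l)$ was introduced precisely as a fresh family, independent of the walks $X^l$, of the environment $\omega$, of the coupling variables $w_i^l, R_i^l, \tilde R_i^l, \mathfrak{r}_i^l, \mathfrak{d}_i^l$, and hence of $(\sigma_{i'}^l)$. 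Combined with the symmetry $P(x) = P(-x)$, this gives $\annealedE_\mu[\sum_{|x| \le 2^{(1/\alpha-\varepsilon)k}} x\,\tilde w^l_{i'}(x)\mid \mathrm{everything\ else}] = 0$, so $(D_i^l)_i$ is a martingale.

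Next I would estimate the second moment. The single-step variance is bounded by the same computation already performed in Lemma \ref{lem:aux},
\[
\annealedE_\mu\!\left[\Big(\sum_{|x|\le 2^{(1/\alpha-\varepsilon)k}} x\,\tilde w_{i'}^l(x)\Big)^2\right] \;\le\; C\!\!\sum_{1\le r\le 2^{(1/\alpha-\varepsilon)k}}\! r^{\,1-\alpha} \;\le\; C\,2^{(1/\alpha-\varepsilon)(2-\alpha)k},
\]
and summing the (conditionally) uncorrelated contributions over $i'\le 2^k\hat C$ and applying Doob's $L^2$ maximal inequality gives
\[
\annealedE_\mu\!\left[\max_{1\le i\le 2^k}(D_i^l)^2\right] \;\le\; C\,2^{(2/\alpha - \varepsilon(2-\alpha))k}.
\]
After the rescaling $2^{-k/\alpha}$, this is $C\,2^{-\varepsilon(2-\alpha)k}$.

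To pass from the walk-level estimate to the $L^q[0,1]$ statement, I would use the crude bound $\|2^{-k/\alpha}D^l(\cdot)\|_{L^q[0,1]}\le \max_{1\le i\le 2^k} 2^{-k/\alpha}|D_i^l|$, combined with Chebyshev's inequality, to get
\[
\annealedP_\mu\!\left(\|\tilde{\mathfrak{X}}^l(t) - \hat{\mathfrak{X}}^l(t)\|_{L^q[0,1]} \ge \xi\right) \;\le\; C\xi^{-2}\,2^{-\varepsilon(2-\alpha)k}
\]
for each individual $l\in[k^3]$. A final application of Markov's inequality to the count over $l$ yields
\[
\annealedP_\mu\!\left(\#\{l\in[k^3]:\|\tilde{\mathfrak{X}}^l(t)-\hat{\mathfrak{X}}^l(t)\|_{L^q[0,1]}\ge\xi\}>\xi k^3\right) \;\le\; C\xi^{-3}\,2^{-\varepsilon(2-\alpha)k},
\]
which decays exponentially in $k$ and is in particular $O(k^{-3})$.

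I do not expect any genuine obstacle here; the proof is essentially a routine second-moment martingale argument, and the only point requiring care is the one emphasized above, namely that the auxiliary variables $(\tilde w_{i'}^l)$ were set up to be independent of the coupling between the walk and the environment, so that the $\sigma_{i'}^l$-factors may be pulled outside the conditional expectation. This is exactly why Crawford and Sly insert the fictitious short edges in their Lemma 6.7: the resulting process $\tilde{\mathfrak{X}}^l$ then genuinely consists of i.i.d.\ increments (on the good event $\mathcal G$), which is crucial for the stable limit theorem applied downstream.
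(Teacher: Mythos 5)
Your proof is correct, and the second-moment martingale argument you give is the right one. The paper itself simply refers to the proof of Lemma~6.7 in Crawford--Sly, so there is no independent text to compare against; but your reconstruction is the natural one and, importantly, is actually the version of the argument that must be used here. In Crawford--Sly's regime $\alpha\in(0,1)$ a crude $L^1$ bound already closes (one has $\sum_{|x|\le N}|x|P(x)\lesssim N^{1-\alpha}$, and summing $2^k$ of these and rescaling by $2^{-k/\alpha}$ gives $2^{-\varepsilon(1-\alpha)k}\to 0$), so it is not clear \emph{a priori} that a literal transfer of their Lemma~6.7 works when $\alpha\in[1,2)$. Your observation that the auxiliary increments $\sigma_{i'}^l\sum_{|x|\le 2^{(1/\alpha-\varepsilon)k}}x\,\tilde w_{i'}^l(x)$ are uncorrelated across $i'$ (because the $\tilde w$-family is genuinely independent of the coupling, including the $\sigma$'s, and has symmetric law) is exactly what replaces the $L^1$ estimate and makes the bound $2^{-\varepsilon(2-\alpha)k}$ come out positive for all $\alpha<2$. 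Two small remarks: (a) for the Doob step one should really index the martingale by $i'\in\{0,\dots,\lfloor 2^k\hat C\rfloor\}$ rather than by $i$, and take a filtration that already contains all the coupling data so that $\sigma_{i'}^l$ is previsible; this is cosmetic and does not change the bound. (b) In your variance computation you also use that the cross terms $\sum_{x\ne y}\langle x,y\rangle P(x)P(y)$ are nonpositive (by the symmetry $P(x)=P(-x)$), which is correct but worth spelling out. The final Chebyshev/Markov passage to the $\#\{l:\dots\}>\xi k^3$ statement is standard and gives a bound that decays exponentially in $k$, which is certainly $O(k^{-3})$ as claimed.
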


\subsection{The final step: approximating $\tilde{\mathfrak{X}}^1(t)$ by an $\alpha$-stable process}
This step can be completed by Lemma \ref{lem:indep} and Proposition \ref{prop:tilde}, therefore Theorem \ref{thm:main} finally follows from the arguments in Section 7 of \cite{crawford2013simple}.
%\textcolor{red}{This subsection is unnecessary. Totally obvious.}

\section{Proof of Proposition \ref{prop:phi}}

In this section, we will prove Proposition \ref{prop:phi}, which is a key technical estimate for the proof of Theorem \ref{thm:main}. 
In order to prove Proposition \ref{prop:phi}, we need to estimate how much time is required for the walk to find a new long edge. 
In what follows, we only consider the first random walk $X^1_n$.\par
Set $\tilde{m}_0=0$ and for each $j\in\mathbb{N}$, let $\tilde{m}_j$ be the first time the walk visits the $j$-th new long edge $(v_j,x_j)$ at the vertex $v_j\ (0=\tilde{m}_0< \tilde{m}_1<\tilde{m}_2<...<\tilde{m}_{j}<...)$. Namely, 
$X^1_{\tilde{m}_j}=v_j, |v_j-x_j|>2^{(1/\alpha-\varepsilon)k}$ and $\omega_{v_j,x_j}=1$.\par
We next choose a subsequence $(m_j)$ of $(\tilde{m}_j)$ in the following manner: 
\begin{align*}
m_0=0\ \ \text{and}\ \ m_1&:=\tilde{m}_{l(1)}, \ \text{where}\ \ l(1):=\inf\{l\in\mathbb{N}:\ X^1_{\tilde{m}_l+2^{\gamma k+1}}\in B(x_l,2^{\delta k})\},\\
\intertext{and for $j\geq2$,}
m_j&:=\tilde{m}_{l(j)}, \ \text{where}\ \ l(j):=\inf\{l> l(j-1):\ X^1_{\tilde{m}_l+2^{\gamma k+1}}\in B(x_l,2^{\delta k})\}.
\end{align*}
Namely, we extract members of $(\tilde{m}_j)$ after $2^{\gamma k+1}$ time of which the walk stays within $B(x_j,2^{\delta k})$.
This extraction is needed to ensure that the walk crosses the new long edge odd times so that it discovers a fresh (independent) environment.
Note that on $\mathcal{G}$ these variables are finite a.s.
and $(\tilde{m}_j)$ and $(m_j+2^{\gamma k+1})$ are stopping times though $(m_j)$ are not.
 Let 
\begin{align}\label{def:beta}
\tilde{\beta}(k)=\tilde{\beta}(k,\varepsilon):=\max\{j: \tilde{m}_j\leq2^k\},\ \beta(k)=\beta(k,\varepsilon):=\max\{j: m_j\leq2^k\}.
\end{align}

 A basic idea is to divide the walk's path into blocks according to the increasing sequence $m_1<m_2<...<m_j<...<m_{\beta(k)}\leq 2^k$. In order to implement this idea, we first show the following estimate.
 \begin{lemma}\label{lem:tilde-m}
 Suppose that $d\geq1$ and $s\in(d,(d+2)\wedge 2d)$.
Then there exists an $\eta_4=\eta_4(\varepsilon)>0$ such that
\begin{align}\label{est:m_j}
\annealedE_{\mu}\left[\max_{0\leq j\leq\tilde{\beta}(k)}(\tilde{m}_{j+1}-\tilde{m}_j)^2\right]=O(2^{(2-\eta_4)k}).
\end{align}
\end{lemma}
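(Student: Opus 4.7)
The plan is to obtain a tail bound for each individual gap $\tilde{m}_{j+1}-\tilde{m}_j$ and then combine it with a union bound and a truncation. Concretely, I aim to show that for $T=2^{(1-\eta)k}$ with some fixed $\eta\in(0,\alpha\varepsilon)$, one has uniformly in $j$
\[
\annealedP_{\mu}\bigl(\tilde{m}_{j+1}-\tilde{m}_j>T \,\big|\, \mathcal{F}_{\tilde{m}_j}\bigr)\le \exp\bigl(-c\cdot 2^{(\alpha\varepsilon-\eta)k}\bigr),
\]
where $\mathcal{F}_{\tilde{m}_j}$ is the $\sigma$-algebra of the walk and of the environment edges revealed by the coupling procedure of Section~\ref{sec:coup} up to time $\tilde{m}_j$. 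A union bound over $j\le 2^k$ gives
\[
\annealedP_\mu\bigl(\max_j (\tilde{m}_{j+1}-\tilde{m}_j)>T\bigr)\le 2^k\exp\bigl(-c\cdot 2^{(\alpha\varepsilon-\eta)k}\bigr),
\]
which is super-polynomially small in $2^{-k}$. Using the deterministic bound $\tilde{m}_{j+1}-\tilde{m}_j\le 2^k$ and the elementary decomposition
\[
\annealedE_\mu\!\left[\max_j (\tilde{m}_{j+1}-\tilde{m}_j)^2\right]\le T^2+2^{2k}\,\annealedP_\mu\bigl(\max_j (\tilde{m}_{j+1}-\tilde{m}_j)>T\bigr)
\]
yields the lemma with $\eta_4:=2\eta$.

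The conditional tail bound splits into two ingredients. The first is a percolation input: for any vertex $v$ outside the revealed set $\mathcal{W}^+_{\tilde{m}_j,1}$, the probability of having at least one open long edge to another unrevealed vertex is of order $p:=\sum_{|x|>2^{(1/\alpha-\varepsilon)k}}P(x)\asymp 2^{-(1-\alpha\varepsilon)k}$, with a negligible correction $O(2^k\cdot 2^{-s(1/\alpha-\varepsilon)k})$ coming from potential long edges pointing back into $\mathcal{W}^+_{\tilde{m}_j,1}$ (this correction is $o(p)$ once $\varepsilon$ is taken sufficiently small). Moreover these long-edge indicators are independent across distinct unrevealed $v$'s because they concern disjoint sets of unrevealed edges. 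The second is a counting step: in the window $(\tilde{m}_j,\tilde{m}_j+T]$ the walk visits at least $cT$ distinct vertices outside $\mathcal{W}^+_{\tilde{m}_j,1}$ with probability at least $1-\exp(-c'\cdot 2^{\kappa k})$ for some $\kappa>0$. Putting these together, the conditional probability of finding no new long edge in the window is at most $(1-p)^{cT}\le \exp(-c''Tp)=\exp(-c''\cdot 2^{(\alpha\varepsilon-\eta)k})$, plus the small counting failure.

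The delicate step is the counting: obtaining a \emph{stretched-exponential} lower tail on the number of new vertices, rather than merely an in-probability statement. I plan to achieve this by partitioning $(\tilde{m}_j,\tilde{m}_j+T]$ into $O(2^{(\varepsilon'-\eta)k})$ sub-windows of length $2^{(1-\varepsilon')k}$ and applying Corollary~\ref{cor:no-return} to each sub-window, which with high probability forces the walker to leave every ball of radius $2^{\delta k}$ around its past trajectory and thus discover at least one new vertex far from the revealed set. Under our standing hypotheses $d\ge 2$ and $\alpha\in[1,2)$ we have $d/\alpha>1$, so the on-diagonal heat-kernel bound of Proposition~\ref{prop:hke} makes the expected number of returns to any given vertex summable in time; this controls the fraction of steps wasted on repeat visits. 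A Chernoff/Bernstein concentration argument, using the approximate independence across sub-windows inherited from the quantitative transience, then upgrades the per-sub-window in-probability lower bound into the stretched-exponential tail needed for the counting step. This is the main technical hurdle, and once it is in place the proof proceeds as outlined in the first paragraph.
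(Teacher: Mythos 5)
Your proposal takes a genuinely different route from the paper, and the route has a gap at precisely the place you flag as the ``main technical hurdle.''

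The paper argues indirectly. If the gap $\tilde{m}_{j+1}-\tilde{m}_j$ exceeds $2^{(1-\kappa)k}$, then by definition every jump made during that gap is short (length $\leq 2^{(1/\alpha-\varepsilon)k}$). Proposition~\ref{prop:shortsteps} then forces the walker's displacement over a block of length $2^{(1-\kappa)k}$ to be at most about $2^{k/\alpha}\cdot 2^{-(\rho-\xi)k}$, i.e.\ much smaller than the diffusive scale of the heat kernel over that time horizon. The heat-kernel upper bound of Proposition~\ref{prop:hke} then makes such confinement unlikely, after bounding $\tilde{\beta}(k)$ by the stochastic-domination argument and applying a union bound. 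No lower bound on the number of distinct new vertices is ever used.

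Your proposal instead wants to show, conditionally on $\mathcal{F}_{\tilde m_j}$, that the walker visits at least $cT$ new vertices outside $\mathcal{W}^+_{\tilde{m}_j,1}$ in time $T=2^{(1-\eta)k}$, with a stretched-exponential failure tail, and then multiply up per-vertex long-edge discovery probabilities. This is where the argument breaks. First, Corollary~\ref{cor:no-return} gives only a \emph{polynomial} bound $O(2^{-\eta_3 k})$ on returning, not a stretched-exponential one, and the events you would want to combine across sub-windows are not independent, since they are read off the same environment along the same trajectory; so the Chernoff/Bernstein upgrade you invoke is not available. Second, even granting Corollary~\ref{cor:no-return} on each of the $O(2^{(\varepsilon'-\eta)k})$ sub-windows, ``leaving every ball of radius $2^{\delta k}$ around the past trajectory'' produces at most one new escape per sub-window, hence only $O(2^{(\varepsilon'-\eta)k})$ new vertices, which is far short of the $cT=c\,2^{(1-\eta)k}$ needed for your $(1-p)^{cT}$ estimate. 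Third, a quantitative statement that a linear fraction of steps visit new vertices (with a good tail) is essentially the content of Proposition~\ref{prop:phi}, which this lemma is a step in proving, so the argument risks circularity; the paper avoids this by never needing such a lower bound. Your decomposition $\annealedE_\mu[\max(\cdot)^2]\le T^2+2^{2k}\annealedP_\mu(\max(\cdot)>T)$ is the same device the paper uses, but the probability input you intend to feed it is not established by the sketch you give.
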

 \begin{proof}
The proof is a verification of the following simple observations: while the walk is seeking for a new long edge, it moves only along short ones. Therefore, the displacement of the walk cannot be too large by
Proposition \ref{prop:shortsteps}. On the other hand, the probability that the walk is confined within a small ball (compared with its time-scale $2^{k/\alpha}$) is controlled by Proposition \ref{prop:hke}, too.\par
We now turn this observation into a rigorous argument.
We will prove that there exist small positive constants $\eta'_4=\eta'_4(\varepsilon), \kappa=\kappa(\varepsilon)$ such that
\begin{align}\label{est:max-m_j}
\annealedP_{\mu}\left(\max_{0\leq j\leq \tilde{\beta}(k)}(\tilde{m}_{j+1}-\tilde{m}_j)\geq 2^{(1-\kappa)k}\right)
\leq 2^{-\eta'_4 k}.
\end{align}
This estimate, H\"{o}lder's inequality and the obvious fact that $\max_{0\leq j\leq\beta(k)}(\tilde{m}_{j+1}-\tilde{m}_j)\leq 2^k$ imply the claim \eqref{est:m_j}.\par
Notice that the random variable $\tilde{\beta}(k)$ is stochastically dominated by $\sum_{i=1}^{2^k}v_i$,
where $(\tilde{v}_i)$ is a sequence i.i.d. Bernoulli random variables with 
$$\annealedP_{\mu}(v_1=1)=1-\annealedP_{\mu}(v_1=0)=\annealedP_{\mu}\left(\sum_{|x|>2^{(1/\alpha-\varepsilon)k}}xw^1_1(x)\neq0\right)=O(2^{(\alpha\varepsilon-1)k}).$$
Therefore, by using H\"{o}lder's inequality and computing exponential moments of $\sum_{i=1}^{2^k}v_i$,  it holds that
\begin{align}\label{ineq:sto-dom}
\annealedP_{\mu}\left( \tilde{\beta}(k)\geq 2^{2\alpha\varepsilon k}\right)\leq 2^{-\psi k}
\end{align}
for some $\psi>0$.
We next define for $1\leq n\leq m$
\begin{align*}
W_{k}(n,m):=\max_{n\leq l\leq m} \left| \frac{1}{2^{k/\alpha}} \sum_{j=1}^l (X_j^1 - X_{j-1}^1)\cdot{\bf 1}_{\left\{\left|X_j^1 - X_{j-1}^1\right| \leq 2^{k(1/\alpha - \hezka)}\right\}} \right|.
\end{align*}
%By Proposition \ref{prop:shortsteps}, there exists an $\varepsilon'>0$ such that
%\begin{align}\label{ineq:short tail}
%\annealedP_{\nu}\left(W_k>2^{(1/\alpha-\varepsilon')k}\right)=O(2^{-\varepsilon'k}).
%\end{align}
%Recall \eqref{def:w_k} for the definition of $W_k$.
Recall that $\rho:=\frac{(2-\alpha)\varepsilon}{2}$, see Proposition \ref{prop:shortsteps}.
For $\xi\in(0,\rho)$, we have that
\begin{align}\label{ineq:tedious}
&\ \ \ \ \annealedP_{\mu}\left(\max_{0\leq j\leq \tilde{\beta}(k)}(\tilde{m}_{j+1}-\tilde{m
}_j)\geq 2^{(1-\kappa)k}\right)\nonumber\\
&\leq\annealedP_{\mu}\left(\max_{0\leq j\leq \tilde{\beta}(k)}(\tilde{m}_{j+1}-\tilde{m}_j)\geq2^{(1-\kappa)k},
\ \max_{0\leq j\leq \tilde{\beta}(k)}W_k(\tilde{m}_j,\tilde{m}_{j+1})\leq 2^{-(\rho-\xi) k }\right)\nonumber\\
&\hspace{20mm}+\annealedP_{\mu}\left(\max_{0\leq j\leq \tilde{\beta}(k)}W_k(\tilde{m}_j,\tilde{m}_{j+1})\geq 2^{-(\rho-\xi) k }\right).
\end{align}
The first term can be bounded as follows: by Proposition \ref{prop:hke} and \eqref{ineq:sto-dom}, we get that
\begin{align}\label{ineq:term1}
&\annealedP_{\mu}\left(\max_{0\leq j\leq \tilde{\beta}(k)}(\tilde{m}_{j+1}-\tilde{m}_j)\geq2^{(1-\kappa)k},
\ \max_{0\leq j\leq \tilde{\beta}(k)}W_k(\tilde{m}_j,\tilde{m}_{j+1})\leq 2^{-(\rho-\xi) k }\right)\nonumber\\
\leq&2^{-\psi k}+\annealedP_{\mu}\Bigl(\max_{0\leq j\leq 2^{2\alpha\varepsilon k}}(\tilde{m}_{j+1}-\tilde{m}_j)\geq2^{(1-\kappa)k},\ \tilde{m}_{2^{2\alpha\varepsilon k}}\leq 2^k\ \text{and}\nonumber\\
&\ \ \ \ \ \ X^1_{\tilde{m}_j+2^{(1-\kappa)k-1}}\in B\left(X_{\tilde{m}_j}^1, 2^{k/\alpha}\cdot 2^{-(\rho-\xi) k }\right)\ \text{for}\ \forall j\ \text{s.t.}\ \tilde{m}_{j+1}-\tilde{m}_j\geq 2^{(1-\kappa)k}\Bigr)\nonumber\\
\leq& \left(2^{(1-\kappa)k-1}\right)^{-d/\alpha} \left(2^{k/\alpha}\cdot 2^{-(\rho-\xi) k }\right)^d\cdot
\sum_{j=0}^{2^{2\alpha\varepsilon k}}\mathbb{P}_{\mu}\left(m_{j+1}-m_j\geq 2^{(1-\kappa)k},\ m_{j+1}\leq 2^k\right)\nonumber\\
\leq& C2^{(\kappa/\alpha-\rho+\xi)dk}\cdot 2^{\kappa k}.
\end{align}
We choose $\kappa$ and $\xi$ small enough so that
$$\kappa+(\kappa/\alpha-\rho+\xi)d<0.$$
Note that in the last step we used the fact that 
$$\sum_{j=0}^{2^{2\alpha\varepsilon k}}\mathbb{P}_{\mu}\left(m_{j+1}-m_j\geq 2^{(1-\kappa)k},\ m_{j+1}\leq 2^k\right)\leq 2^{\kappa k}.$$
The second term can be bounded as follows: 
by triangle inequality, we have 
$$\max_{0\leq j\leq \beta(k)}W_k(\tilde{m}_j,\tilde{m}_{j+1})\leq 2 W_k(0,2^k).$$
Therefore by using \eqref{ineq:short-condition} and the Markov inequality, we obtain that
\begin{align}\label{ineq:term3}
\annealedP_{\mu}\left(\max_{0\leq j\leq \beta(k)}W_k(\tilde{m}_j,\tilde{m}_{j+1})\geq 2^{-(\rho-\xi) k }\right)= O(2^{-\xi k}). 
\end{align}
Combining \eqref{ineq:tedious}, \eqref{ineq:term1} and \eqref{ineq:term3}, we get the desired estimate.
\end{proof}

We next prove a similar estimate for $(m_j)$.
\begin{lemma}\label{lem:new edge}
Suppose that $d\geq1$ and $s\in(d,(d+2)\wedge 2d)$. Then we have that
\begin{align}\label{est:m_j}
\annealedE_{\mu}\left[\max_{0\leq j\leq\beta(k)}(m_{j+1}-m_j)^2\right]=O(2^{(2-\eta_4/2)k}).
\end{align}
\end{lemma}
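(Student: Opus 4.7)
The plan is to reduce this lemma to Lemma \ref{lem:tilde-m} by showing that the number of new long edges discovered between consecutive successful crossings is at most logarithmic in $k$. Writing $N_j := l(j+1) - l(j)$, we have the deterministic bound
\[
\max_{0 \leq j \leq \beta(k)} (m_{j+1} - m_j) \;=\; \max_{j}\sum_{l = l(j)+1}^{l(j+1)} (\tilde m_l - \tilde m_{l-1}) \;\leq\; \Bigl(\max_{j \leq \beta(k)} N_j\Bigr) \cdot \max_{0 \leq l \leq \tilde\beta(k)}(\tilde m_{l+1} - \tilde m_l),
\]
so it suffices to show that $\max_j N_j = O(k)$ with overwhelming probability and then invoke Lemma \ref{lem:tilde-m}.

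To bound $N_j$, first I would argue that conditional on the local environments near the discovered long edges, the events $\{X^1_{\tilde m_l + 2^{\gamma k+1}} \in B(x_l, 2^{\delta k})\}$ are independent Bernoulli trials. This uses the construction in Section~\ref{sec:coup}: on $\mathcal{G}$, distinct special phases involve disjoint localized graphs $V^\ast$, and the local quantities $(\mathfrak r, \mathfrak d)$ associated to different long-edge discoveries are by construction independent. By Claim \ref{claim:cross}, the success probability for the $l$-th such trial equals
\[
\mathbb{P}\bigl(R(v_l) > R(x_l)\bigr) \;=\; \frac{(1-p_l)q_l}{1 - (1-p_l)(1-q_l)},
\]
where $p_l, q_l$ are the geometric parameters at $v_l$ and $x_l$. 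Next, I would establish that for some deterministic $c_0 > 0$, the event $\{\max(p_l, q_l) \leq 1 - c_0 \text{ for all } l \leq \tilde\beta(k)\}$ holds except with probability $O(2^{-\eta k})$; this uses that $p_l$ is close to $1$ only if the local degree $\tilde d^\omega(v_l)$ is atypically large, which has exponentially small annealed probability, combined with a union bound over $l \leq \tilde\beta(k) \leq 2^{2\alpha\varepsilon k}$ (using \eqref{ineq:sto-dom}).

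Conditionally on this high-probability event, each $N_j$ is stochastically dominated by an independent $\operatorname{Geom}(c_0)$ random variable. Since $\beta(k) \leq \tilde\beta(k) \leq 2^{2\alpha\varepsilon k}$ on the same event, a standard maximum bound for i.i.d.\ geometric variables yields
\[
\annealedP_{\mu}\!\Bigl(\max_{0 \leq j \leq \beta(k)} N_j > C k\Bigr) \;=\; O(2^{-\eta'k})
\]
for some constant $C = C(c_0)$ and $\eta' > 0$. Splitting the expectation over this good event and its complement (on which we use the deterministic bound $(m_{j+1}-m_j)^2 \leq 2^{2k}$ together with the exponentially small probability of the complement), and applying Lemma \ref{lem:tilde-m},
\[
\annealedE_{\mu}\!\Bigl[\max_{j \leq \beta(k)}(m_{j+1} - m_j)^2\Bigr] \;\leq\; C^2 k^2 \cdot O(2^{(2-\eta_4)k}) \;+\; 2^{2k} \cdot O(2^{-\eta'k}),
\]
which is $O(2^{(2-\eta_4/2)k})$ since $k^2$ is absorbed into an arbitrarily small exponential and we may assume $\eta' > \eta_4/2$ by shrinking $\eta_4$ if necessary.

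The main obstacle will be securing the uniform lower bound $c_0$ on the single-edge crossing probability. The delicate case is when $p_l$ and $q_l$ are simultaneously close to $1$, which requires both endpoints of the long edge to have large local degree and high local escape probability, an atypical local configuration. Verifying quantitatively that this configuration has exponentially small probability, uniformly over the $\tilde\beta(k)$ discovery sites, should follow by combining standard tail estimates on local degrees under $\mu$ with the local heat kernel bound of Proposition \ref{prop:hke}, but the details require some care because the discovery sites $v_l$ are themselves random.
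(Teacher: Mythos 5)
Your overall plan---decompose $m_{j+1}-m_j$ as $N_j\cdot\max_l(\tilde m_{l+1}-\tilde m_l)$, control $N_j$ by treating the crossing attempts as roughly independent Bernoulli trials, and then combine with Lemma~\ref{lem:tilde-m}---is in the same spirit as the paper's proof, which likewise reduces \eqref{est:m_j} to showing that a long run of consecutive failed crossings is unlikely. However, there is a genuine gap in the step you yourself flag as the main obstacle.

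The claim that for some deterministic $c_0>0$ the event $\{\max(p_l,q_l)\leq 1-c_0 \text{ for all } l\leq\tilde\beta(k)\}$ holds except with probability $O(2^{-\eta k})$ cannot be established. On $\mathcal{G}$ the pairs $(p_l,q_l)$ are (essentially) i.i.d.\ draws of $(\tilde p(0),\tilde d^\omega(0))$ under $\mu$, and for any fixed $c_0>0$ the single-site event $\{\tilde p(0)>1-c_0\}$ has \emph{constant} positive $\mu$-probability, not one that decays in $k$. In fact the paper's own convention is that $\tilde p_v=1$ when $v$ has no open edge to its $2^{\delta k}$-neighborhood, which happens with positive probability; more generally, a sparse local cluster with few escape routes yields a local return probability close to $1$, so your heuristic (``$p_l$ close to $1$ only if $\tilde d^\omega(v_l)$ is atypically large'') is actually backwards. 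With $\tilde\beta(k)\approx 2^{2\alpha\varepsilon k}$ i.i.d.\ trials, the union bound therefore fails: the probability that \emph{some} $p_l$ exceeds $1-c_0$ tends to $1$, not to $0$.

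The paper sidesteps this by not asking for a uniform pointwise bound on the conditional success probability. It fixes a constant $K$ so that $\mu(\deg(0)\leq K)\geq\tfrac12$ and $\mu(\tilde p(0)\geq 1/K)\geq\tfrac12$, and shows that the \emph{annealed} probability that a given crossing attempt succeeds (i.e., the walk ends up in $B(x_j,2^{\delta k})$ and escapes) is bounded below by the deterministic constant $\tfrac{1}{16K^3}$. Because distinct special phases use fresh, previously undiscovered local environments (on $\mathcal{G}$), the success indicators are independent after averaging over the environment, and the probability of a run of $L$ consecutive failures is at most $(1-\tfrac{1}{16K^3})^L$ even though individual conditional success probabilities can be arbitrarily close to $0$. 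The paper then lets $L=2^{\kappa k/2}$, which is more than enough (you do not actually need the sharper $N_j=O(k)$). If you replace your uniform-bound step by this annealed/averaging argument, the rest of your decomposition goes through.
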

\begin{proof}
By Lemma \ref{lem:tilde-m}, it suffices to show that
\begin{align}\label{est:max-m_j}
\annealedP_{\mu}\left(\max_{0\leq j\leq \beta(k)}(\tilde{m}_{j+1}-\tilde{m}_j)\leq 2^{(1-\kappa)k}\ \ \text{and}\ \ \max_{0\leq j\leq \beta(k)}(m_{j+1}-m_j)\geq 2^{(1-\kappa/2)k}\right)
\leq 2^{-\eta'' k}
\end{align}
for some $\kappa',\eta''>0$.
Recall that $\tilde{m}_j$ is the first time the walk visits the $j$-th new long edge $(v_j,x_j)$ at the vertex $v_j$. Therefore, we get that
\begin{align}\label{ineq:very-long}
&\annealedP_{\mu}\left(\max_{0\leq j\leq \beta(k)}(\tilde{m}_{j+1}-\tilde{m}_j)\leq 2^{(1-\kappa)k}\ \ \text{and}\ \ \max_{0\leq j\leq \beta(k)}(m_{j+1}-m_j)\geq 2^{(1-\kappa/2)k}\right)\nonumber\\
\leq &\annealedP_{\mu}\left(\exists j\leq\tilde{\beta}(k)\ \text{s.t.}\  X^1_{\tilde{m}_{j'}+2^{\gamma k+1}-1}\notin B(x_j,2^{\delta k})\ \text{for}\ \forall j'\in[j,j+2^{\kappa k/2}]\right),\nonumber\\
\intertext{by \eqref{ineq:good event} and \eqref{ineq:sto-dom} we obtain that}
\leq &2^{-\eta_2 k}+2^{-\psi k}\nonumber\\
&+\sum_{j=1}^{2^{2\alpha\varepsilon k}}\annealedP_{\mu}\left(\left\{ X^1_{\tilde{m}_{j'}+2^{\gamma k+1}-1}\notin B(x_j,2^{\delta k})\ \text{for}\ \forall j'\in[j,j+2^{\kappa k/2}]\right\}\cap\{m_{2^{2\alpha\varepsilon k}}\leq 2^k\}\cap\mathcal{G}\right).
\end{align}
Let $K>0$ be large enough so that
\begin{align}\label{def:K}
\mu(\deg(0)\leq K)\geq \frac{1}{2}\ \ \text{and}\ \ \mu(\tilde{p}(0)\geq 1/K)\geq \frac{1}{2}.
\end{align}
Recall that $\tilde{p}(0)$ is a local return probability defined in Section \ref{sec:coup}. The estimates \eqref{def:K} imply that for any $j$,
\begin{align*}
&\annealedP_{\mu}\left(\left\{ X^1_{\tilde{m}_{j'}+2^{\gamma k+1}-1}\in B(x_j,2^{\delta k})\right\}\cap\{m_{j+1}\leq 2^k\}\cap\mathcal{G}\right)\\
\geq &\annealedP_{\mu}\left(\left\{X^1_{m_j+1}=x_j,\  X^1_{m_j+2}\neq v_j\ \text{and}\ X^1_i\neq x_j\ \text{for}\ \forall i\geq m_{j}+2\right\}\cap\mathcal{G}\right)\\
\geq&\left(\frac{1}{2}\right)^4\cdot K^{-3}=\frac{1}{16K^3}.
\end{align*}
This estimate together with \eqref{ineq:very-long} implies the claim.
\end{proof}

\begin{proof}[Proof of Proposition \ref{prop:phi}]
What we will do to prove Proposition \ref{prop:phi} is to decompose paths of a RW into several "blocks" by using $(m_j)$. We then prove and utilize the fact that distinct blocks are uncorrelated since a traversal along a new long edge leads the walk to a part of the environment that has not been explored before.
We verify these claims in the following way:
\begin{enumerate}[(i)]
\item the first step is to approximate the  indicator function 
$${\bf 1}\left\{X^l_j\notin\{X^l_0,X^l_1,...,X^l_{j-1}\}\ \text{and}\ X^l_j\neq X^{l'}_{j'}\ \text{for any}\ l'< l, j'\in[2^k] \right\}$$
 by
$$ {\bf 1}\left\{X^l_j\notin\{X^l_0,X^l_1,...,X^l_{j-1}\}\ \right\}.$$
 This is justified by Lemma 9.10 in \cite{crawford2013simple} which claims that intersections of two independent SRWs have a sub-linear growth. 
 Namely, it claims that for any $l,l'\in[k^3]$ with $l\neq l'$, there exists $c,\tilde{\upsilon}>0$ such that 
 $$\annealedE_{\mu}[|\{X_0^l,X_1^l,...,X_{2^k}^l\}\cap\{X_0^{l'},X_1^{l'},...\}|]\leq c2^{(1-\tilde{\upsilon})k},$$
 where for a set $A$, we denote by $|A|$ the cardinality of $A$.
 (NB. In Lemma 9.10 in \cite{crawford2013simple} they only claimed the above estimate for $s\in(d,d+1)$ but the proof extends to $d\geq2, s\in[d+1,d+2)$ without any change.)\par
 Set
 \begin{align}\label{def:tilde-phi}
 \tilde{\phi}_i^l:=\sum^i_{j=1}{\bf 1}\left\{X^l_j\notin\{X^l_0,X^l_1,...,X^l_{j-1}\}\ \right\}.
 \end{align}
  \item We next replace $ {\bf 1}\left\{X^l_i\notin\{X^l_0,X^l_1,...,X^l_{i-1}\}\ \right\}$ with
 $${\bf 1}\left\{X^l_i\notin\{X^l_0,X^l_1,...,X^l_{i-1}\}\right\} \cap\left\{\omega_{x,X^l_i}=0\ \text{for}\ \forall x\in\mathbb{Z}^d\ \text{with}\ |x-X_i^l|>2^{\delta k}\right\}.$$
 It is easy to check that the difference of sums of these indicators also have a sub-linear growth. 
 %\item j
 \item %\textcolor{red}{(redundant for now)} Suppose that for $k\in\mathbb{N}$, the walk finds a new vertex $v_k$ at time $m_j\ (1\leq m_1<m_2<...<m_{j}<...)$ and a new long edge $\{v_j,x_j\}$, and the walk is within distance $2^{\delta k}$ at time $m_j+2\cdot 2^{\gamma k}$. 
%Let $\beta(k)$ is the unique integer such that $m_{\beta(k)}\leq 2^k<m_{\beta(k)+1}$.
%On the good event ${\cal G}$, this definition implies that the walk is on the side of $x_j$ after its last crossing of $\{v_j,x_j\}$.
%Moreover, we assume that the walk does not see any long edge between $m_j$ and $m_{j+1}$.
 We decompose the walk's path into blocks according to the increasing sequence $m_1<m_2<...<m_j<...<m_{\beta(k)}\leq 2^k$.
 We wish to obtain a kind of regenerative structure by doing this. To do so, we further replace the sum
 $$\sum_{i=1}^{2^k}{\bf 1}\left\{X^l_i\notin\{X^l_0,X^l_1,...,X^l_{i-1}\}\right\} \cap\left\{\omega_{x,X^l_i}=0\ \text{for}\ \forall x\in\mathbb{Z}^d\ \text{with}\ |x-X_i^l|>2^{\varepsilon_1 k}\right\}$$
 with
 $\sum_{j=1}^{\beta(k)}\mathcal{N}^l_j$, where
  \begin{align}\label{def:cal-n}
  \mathcal{N}^l_1&:=\sum_{i=0}^{m_1}{\bf 1}\left\{X^l_i\notin
 \{X^l_{0},X^l_{1},...,X^l_{i-1}\}\right\} \cap\left\{\omega_{x,X^l_i}=0\ \text{for}\ \forall x\in\mathbb{Z}^d\ \text{with}\ |x-X_i^l|>2^{\varepsilon_1 k}\right\}\nonumber\\
  \intertext{and for $2\leq j\leq\beta(k)$}
 \mathcal{N}^l_j&:=\sum_{i=m_{j-1}+2^{\gamma k+1}}^{m_j}{\bf 1}\left\{X^l_i\notin
 \{X^l_{m_{j-1}},...,X^l_{i-1}\}\right\} \cap\left\{|X^l_i-X^l_{i'}|>2^{\delta k}
 \ \text{for}\ \forall i'\leq m_{j-1}\right\}\nonumber\\
 &\hspace{55mm}\cap\left\{\omega_{x,X^l_i}=0\ \text{for}\ \forall x\in\mathbb{Z}^d\ \text{with}\ |x-X_i^l|>2^{\delta k}\right\},
 \end{align}
% \textcolor{red}{Sorry for a clumsy definition but I couldn't come up with a simpler one.}
 By Corollary \ref{cor:no-return}, this replacement only gives rise to a sub-linear difference.

\item We finally claim that $\{\mathcal{N}^{l}_j\}_{1\leq j\leq \beta(k)-1}$ are independent on $\mathcal{G}$, but this is really immediate from the definition of $\mathcal{N}^l_j$ and the construction of the coupling explained in \cite[Section 5]{crawford2013simple}.
\end{enumerate}
We now aim at showing that there exists $\eta_3'=\eta_3'(\varepsilon)>0$ such that the following estimate holds:
\begin{align}\label{ineq:goal1}
\annealedE_{\mu}\left[\max_{1\leq i\leq 2^k}|\tilde{\phi}^l_{i}-i\hat{C}|\right]=O(2^{(1-\eta_3') k}).
\end{align}
This estimate together with the Markov inequality and the union bound implies the conclusion. 
To this end, we first deduce \eqref{ineq:goal1} from the following inequality we will prove later:
there exists $\eta_3''=\eta_3''(\varepsilon)>0$ such that 
\begin{align}\label{ineq:goal2}
\annealedE_{\mu}\left[\left(\sum_{j=0}^{\beta(k)-1}\left(\mathcal{N}_j^l-\annealedE_{\mu}\left[\mathcal{N}_j^l\right]\right)\right)^2\right]=O(2^{(2-\eta_3'')k}).
\end{align}
In what follows, we will only consider the case $l=1$ but the same argument implies the claim for general $l\in[k^3]$.

 \begin{proof}[Proof of $\eqref{ineq:goal2}\Rightarrow\eqref{ineq:goal1}$]
Since $\{\mathcal{N}^1_j\}_{1\leq j\leq\beta(k)}$ is an independent sequence, 
\begin{align*}
M_{i}:=\begin{cases}
&\sum_{j=0}^{i-1}\left(\mathcal{N}_j^1-\annealedE_{\mu}\left[\mathcal{N}_j^1\right]\right)\ \ \ \text{when}\ 1\leq i\leq \beta(k)\\
& M_{\beta(k)}\ \hspace{31mm}\ \text{when}\ i>\beta(k).
\end{cases}
\end{align*}
is a centered martingale. By the martingale maximal inequality and \eqref{ineq:goal2}, we get that
\begin{align}\label{ineq:mart}
\annealedE_{\mu}\left[\max_{1\leq i\leq\beta(k)}M_i\right]=O(2^{(1-\eta_3''/2)k}).
\end{align}
By the triangle inequality
\begin{align*}
&\annealedE_{\mu}\left[\max_{1\leq i\leq 2^k}|\tilde{\phi}^l_{i}-i\hat{C}|\right]\\
\leq &\annealedE_{\mu}\left[\max_{1\leq i\leq\beta(k)}M_i\right]+2^{\gamma k+1}\annealedE_{\mu}[\beta(k)]+2\annealedE_{\mu}\left[\max_{0\leq i\leq \beta(k)}(m_{j+1}-m_j)\right]+\annealedE_{\mu}[2^k-\beta(k)].
%&\leq C\annealedE_{\nu}\left[\max_{1\leq i\leq\beta(k)}M_i\right]+4\annealedE_{\mu}\left[\max_{1\leq i\leq \beta(k)}(\tau_j-m_j)\right]+2\annealedE_{\mu}\left[\max_{0\leq i\leq \beta(k)}(m_{j+1}-\tau_j)\right]
\end{align*}
The first three terms can be controlled by \eqref{ineq:sto-dom}, Lemma \ref{lem:new edge} and \eqref{ineq:mart}. It is easy to bound the last term using the observation above \eqref{ineq:sto-dom}.
\end{proof}

We finally prove \eqref{ineq:goal2} to get the conclusion. 
Overriding the rule of our notation, we write $m_{\beta(k)+1}=2^k$. By \eqref{ineq:sto-dom} and independence, it holds that
\begin{align*}
&\annealedE_{\mu}\left[\left(\sum_{j=1}^{\beta(k)}\left(\mathcal{N}_j^l-\annealedE_{\mu}\left[\mathcal{N}_j^l\right]\right)\right)^2\right]\\
\leq&\annealedE_{\mu}\left[\left(\sum_{j=1}^{\beta(k)}\left(\mathcal{N}_j^l-\annealedE_{\mu}\left[\mathcal{N}_j^l\right]\right)\right)^2:\mathcal{G}\ \right]+2^{2k}\annealedP_{\mu}(\mathcal{G}^c)\\
\leq &4C\annealedE_{\mu}\left[\left(\sum_{j=1}^{2^{2\alpha\varepsilon k}}\left(\mathcal{N}_j^l-\annealedE_{\mu}\left[\mathcal{N}_j^l\right]\right)\right)^2:\mathcal{G}\ \right]\\
&\hspace{30mm}+4C\cdot 2^{2k}\annealedP_{\mu}(\beta(k)\geq 2^{(1+\xi)\alpha\varepsilon})+4\cdot 2^{2k}\annealedP_{\mu}(\mathcal{G}^c)\\
\leq &4C\sum_{j=1}^{2^{2\alpha\varepsilon k}}\annealedE_{\nu}\left[\left(\mathcal{N}_j^1-\annealedE_{\mu}\left[\mathcal{N}_j^l\right]\right)^2:\mathcal{G}\ \right]+4C\cdot 2^{2k}\annealedP_{\mu}(\beta(k)\geq2^{(1+\xi)\alpha\varepsilon})+2^{2k}\annealedP_{\mu}(\mathcal{G}^c)\\
\leq&16C\sum_{j=0}^{2^{2\alpha\varepsilon k}  }\annealedE_{\nu}\left[\left(m_{j+1}-m_j\right)^2\right]+4C\cdot 2^{2k}\annealedP_{\mu}(\beta(k)\geq2^{(1+\xi)\alpha\varepsilon})+2^{2k}\annealedP_{\mu}(\mathcal{G}^c).
\end{align*}

By \eqref{ineq:good event}, \eqref{ineq:sto-dom} and Lemma \ref{lem:new edge}, we get the conclusion.
\end{proof}

\if0
\section{Comments and Questions (to be deleted!)}
I'd like you to do the following stuff:
\begin{enumerate}
%\item To finish Section 1 (Introduction).
\item To reply to my questions and requests for further suggestions. Please look at the list below.
\end{enumerate}

\begin{itemize}
\item (page 2, line 9)   What I asked you is about $\mu_0$, which is just a conditioned product measure. Is it really an invariant measure of the environment seen from the particle? Perhaps it's true for variable speed walks... \\ {\red $\nu_0$ is. $\mu_0$ isn't. But there is currently no claim in the paper that it is.}
\item (page 4, line 15)  I'd like you to answer my question here. \\ {\red Done.}
\item (page 12, line 16) I'm not sure how you want me to rewrite it! \\ {\red I changed my mind - no need to rewrite.}
\item (page 17, proof of Proposition \ref{prop:cs-6.3}) I understand what you mean but I'm not sure whether or not it's a good idea to repeat essentially same arguments, which are fairly lengthy.... I need your suggestion. \\  {\red Let's leave it as is.}
\item (page 21, Definition of $l(j)$)  The way it is defined now is correct. I want the walk to be around the endpoint that is not the one at which the walk found a new long edge. \\ {\red OK}
\item (page 21, Lemma \ref{lem:tilde-m}) Why do you think we should assume $d\ge2$ and remove $s<2d$? I'm not sure why you think it helps simplify the proof. \\
{\red I do not insist on it.}
\end{itemize}

\fi

\bibliography{LRP}
\bibliographystyle{plain}

\end{document}